\theoremstyle{plain}
\newtheorem{theorem}{Theorem}[section]
\newtheorem{lemma}[theorem]{Lemma}
\theoremstyle{definition}
\newtheorem{assumption}[theorem]{Assumption}
\newtheorem{example}[theorem]{Example}
\newtheorem{remark}[theorem]{Remark}
\DeclareMathOperator*{\argmin}{arg\,min}
\newcommand{\R}{\mathbb{R}}
\newcommand{\x}{\mathbf x}
\newcommand{\y}{\mathbf y}
\newcommand{\z}{\mathbf z}
\newcommand{\T}{\mathrm{T} }
\title{Penalty-based Methods for Simple Bilevel Optimization under H\"{o}lderian Error Bounds}
\author{%
  Pengyu Chen\thanks{Equal contribution} \\
  School of Data Science\\
  Fudan University\\
  \texttt{pychen22@m.fudan.edu.cn} \\
  \And
  Xu Shi$^*$ \\
  School of Data Science\\
  Fudan University\\
  \texttt{xshi22@m.fudan.edu.cn} \\
  \And
  Rujun Jiang\thanks{Corresponding author} \\
  School of Data Science\\
  Fudan University\\
  \texttt{rjjiang@fudan.edu.cn} \\
  \And
  Jiulin Wang \\
  School of Data Science\\
  Fudan University\\
  \texttt{wangjiulin@fudan.edu.cn} \\
}
\begin{document}

\maketitle

\begin{abstract}
This paper investigates simple bilevel optimization problems where we minimize an upper-level objective over the optimal solution set of a convex lower-level objective. Existing methods for such problems either only guarantee asymptotic convergence, have slow sublinear rates, or require strong assumptions. To address these challenges, we propose a penalization framework that delineates the relationship between approximate solutions of the original problem and its reformulated counterparts. This framework accommodates varying assumptions regarding smoothness and convexity, enabling the application of specific methods with different complexity results. Specifically, when both upper- and lower-level objectives are composite convex functions, under an $\alpha$-H{\"o}lderian error bound condition and certain mild assumptions, our algorithm attains an $(\epsilon,\epsilon^{\beta})$-optimal solution of the original problem for any $\beta> 0$ within $\mathcal{O}\left(\sqrt{{1}/{\epsilon^{\max\{\alpha,\beta\}}}}\right)$ iterations. The result can be improved further if the smooth part of the upper-level objective is strongly convex. We also establish complexity results when the upper- and lower-level objectives are general nonsmooth functions. Numerical experiments demonstrate the effectiveness of our algorithms.

\end{abstract}

\section{Introduction}
\label{introduction}
Bilevel optimization involves embedding one optimization problem within another, creating a hierarchical structure where the upper-level problem's feasible set is influenced by the lower-level problem. This framework frequently occurs in various real-world scenarios, such as meta-learning \citep{bertinetto2018meta, rajeswaran2019meta}, hyper-parameter optimization \citep{chen2024lower, franceschi2018bilevel, shaban2019truncated}, reinforcement learning \citep{hong2020two} and adversarial learning \citep{bishop2020optimal,wang2021fast,wang2022solving}. In this paper, we concentrate on a subset of bilevel optimization known as simple bilevel optimization (SBO), which has garnered significant interest in the machine learning community due to its relevance in dictionary learning \citep{beck2014first,jiang2023conditional}, lexicographic optimization \citep{kissel2020neural,gong2021bi}, lifelong learning \citep{malitsky2017chambolle,jiang2023conditional}; see more details in Appendix \ref{mot exa}.

SBO aims to find an optimal solution that minimizes the upper-level objective over the solution set of the lower-level problem. In other words, we are interested in solving the following problem:
\begin{equation}\label{pb:primal}
\min\limits_{\x\in\R^n } F(\x)\quad
\text{s.t.}~~\x\in\argmin\limits_{\z\in \R^n}G(\z).\tag{P}
\end{equation}
Here $F, G: \mathbb{R}^n\rightarrow \mathbb{R} \bigcup \{\infty\}$ are proper, convex, and lower semi-continuous functions. We also assume that the optimal solution set of the lower-level problem, denoted as $X_{\text{opt}}$, is nonempty. Moreover, since $G$ is convex and lower semi-continuous, it holds that $X_{\text{opt}}$ is closed and convex \citep[Proposition 1.2.2 and Page 49]{bertsekas2003convex}.

In this paper, we first reformulate problem \eqref{pb:primal} into the constrained form:
\begin{equation}
\label{pb:val-func}
\min_{\x\in \R^n } F(\x)\quad\text{s.t.}~~ G(\x) - G^* \leq 0,\tag{$\text{P}_{\rm Val}$}
\end{equation}
where $G^*$ represents the optimal value of the unconstrained lower-level problem.

Based on this reformulation, we consider the following penalization of \eqref{pb:val-func},
\begin{equation}
\label{pb:penal}
\min_{\x\in \R^n } \Phi_{\gamma}(\x)= F(\x)+\gamma p(\x), \tag{$\text{P}_{\gamma}$}
\end{equation}
where $p(\x):= G(\x)-G^*$ is the so-called residual function and $\gamma>0$ is the penalized parameter. Obviously, we have $p(\x) \geq 0$, and $p(\x) = 0$ if and only if $ \x \in X_{\text{opt}}$.

Denote $F^*$ and $G^*$ as the optimal values of problem \eqref{pb:primal} and the lower-level problem $\min_{\x\in \R^n }G(\x)$, respectively. We aim to find an $(\epsilon_F,\epsilon_G)$-optimal solution $\tilde{\x}^*$ of problem \eqref{pb:primal}, which satisfies
\begin{equation}\label{def:optimal_primal}
F(\tilde{\x}^*)-F^*\le \epsilon_F,\quad G(\tilde{\x}^*)-G^*\le \epsilon_G.
\end{equation}

Moreover, a point $\tilde{\x}_{\gamma}^*$ is said to be an $\epsilon$-optimal solution of problem \eqref{pb:penal} if
\[
\Phi_{\gamma}(\tilde{\x}_{\gamma}^*)-\Phi_{\gamma}^* \leq \epsilon,
\]
where $\Phi_{\gamma}^*$ is the optimal value of problem \eqref{pb:penal}.


\subsection{Related work}
\label{related work}

Various approaches have been developed to solve problem \eqref{pb:primal} \citep{cabot2005proximal,solodov2007explicit,sabach2017first,dutta2020algorithms,gong2021bi}. Among those, one category that is the most related to penalization formulation \eqref{pb:penal} is the regularization method, which integrates the upper- and lower-level objectives through Tikhonov regularization \citep{Tikhonov1977}
\begin{equation}
\label{pb:Tikhonov}
\min_{\x\in \R^n} \eta(\x) := \sigma F(\x) + G(\x),\tag{$\text{P}_{\text{Reg}}$}
\end{equation}
where $\sigma$ is the so-called regularization parameter. When $F$ is strongly convex and its domain is compact, \cite{amini2019iterative} extended the IR-PG method from \cite{solodov2007explicit}, which achieved a asymptotic convergence rate for the upper-level problem and a convergence rate of ${\mathcal{O}}\left( 1/K^{0.5-b}\right)$ for the lower-level problem, where $b\in (0,0.5)$. \cite{malitsky2017chambolle} studied a version of Tseng's accelerated gradient method and showed a convergence rate of ${\mathcal{O}}\left( 1/K\right)$ for the lower-level problem, while the convergence rate for the upper-level objective is not explicitly provided. \cite{kaushik2021method} proposed an iteratively regularized gradient (a-IRG) method which obtains a complexity of ${\mathcal{O}}\left( 1/K^{0.5-b}\right)$ and ${\mathcal{O}}\left( 1/K^{b}\right)$ for the upper- and lower-level objective, respectively, where $b\in (0,0.5)$. Inspired by this research, and under a quasi-Lipschitz assumption for $F$, \cite{merchav2023convex} introduced a bi-subgradient (Bi-SG) method. This method demonstrates convergence rates of $\mathcal{O}(1/K^{b})$ and $\mathcal{O}(1/K^{1-b})$ for the lower- and upper-level objectives, respectively, where $b\in (0.5,1)$. In their framework, the convergence rate of the upper-level objective can be improved to be linear when $F$ is strongly convex. Recently, under the weak-sharp minima assumption of the lower-level problem, \cite{samadi2023achieving} proposed a regularized accelerated proximal method (R-APM), showing a convergence rate of $\mathcal{O}(1/K^2)$ for both upper- and lower-level objectives. When the domain is compact and $F, G$ are both smooth, \cite{giang2023projection} proposed an iteratively regularized conditional gradient (IR-CG) method, which ensures convergence rates of $\mathcal{O}(1/K^p)$ and $\mathcal{O}(1/K^{1-p})$ for upper- and lower-level objectives, respectively, where $p\in (0,1)$.

Despite the abundance of existing methodologies yielding non-asymptotic convergence outcomes, their efficacy is frequently contingent upon additional assumptions. Denote $L_{f_1}$ and $L_{g_1}$ as the Lipschitz constants for the gradients of the smooth components in the upper- and lower-level objectives, respectively. Specifically, when $F$ is strongly convex and $G$ is smooth, \cite{beck2014first} presented the Minimal Norm Gradient (MNG) method and provided the asymptotic convergence to the optimal solution set and a convergence rate of ${\mathcal{O}}\left({L_{g_1}^2}/{\epsilon^2} \right)$ for the lower-level problem. When $F$ is assumed to be smooth, \cite{jiang2023conditional} introduced a conditional gradient-based bilevel optimization (CG-BiO) method, which invokes at most ${\mathcal{O}}\left(\max\{L_{f_1}/\epsilon_F, L_{g_1}/\epsilon_G\} \right)$ of linear optimization oracles to achieve an $(\epsilon_F,\epsilon_G)$-optimal solution. \cite{shen2023online} combined an online framework with the mirror descent algorithm and established a convergence rate of $\mathcal{O}(1/\epsilon^3)$ for both upper- and lower-level objectives, assuming a compact domain and boundedness of the functions and gradients at both levels. Furthermore, they showed that the convergence rate can be improved to $\mathcal{O}(1/\epsilon^2)$ under additional structural assumptions. For a concise overview of overall methodologies, including their assumptions and convergence outcomes, refer to Table \ref{table1} in Appendix \ref{appendix:related work}.

For general bilevel optimization problems, there have been recent results on convergent guarantees \citep{ shen2023penalty,sow2022primal,chen2023bilevel,huang2023momentum}. Among those, the one that is the most related to ours is \citep{shen2023penalty}. It investigates the case when the upper-level objective is nonconvex and gives convergence results under additional assumptions \citep[Theorem 3 and 4]{shen2023penalty}. However, as the general bilevel optimization problem is nonconvex, the algorithms in the literature often converge to weak stationary points, while our method for SBO converges to global optimal solution.

\subsection{Our approach}
\label{our approach}

Our approach is straightforward. Firstly, we introduce a penalization framework delineating the connection between approximate solutions of problems \eqref{pb:primal} and \eqref{pb:penal}. This framework enables the attainment of an $(\epsilon_F,\epsilon_G)$-optimal solution by solving problem \eqref{pb:penal} approximately. Subsequently, our focus shifts solely to resolving the unconstrained problem \eqref{pb:penal}. Depending on varying assumptions regarding smoothness and convexity, we can employ different methods such as the accelerated proximal gradient (APG) methods \citep{beck2009fast,nesterov2013gradient,lin2014adaptive} to solve problem \eqref{pb:penal}. We summarize our main contributions as follows.
\begin{itemize}
\item We propose a framework that explicitly examines the relationship between an $\epsilon$-optimal solution of penalty formulation \eqref{pb:penal} and an $(\epsilon_F,\epsilon_G)$-optimal solution of problem \eqref{pb:primal}. We also provide a lower bound for the metric $F(\x)-F^*$.
\item When $F$ and $G$ are both composite convex functions, we provide a penalty-based APG algorithm that attains an $(\epsilon, \epsilon^{\beta})$-optimal solution of problem \eqref{pb:primal} within $\mathcal{O}(\sqrt{{1}/{\epsilon^{\max\{\alpha,\beta\}}}})$ iterations. If the upper-level objective is strongly convex, the complexity can be improved to ${\mathcal{O}}(\sqrt{{1}/{\epsilon^{\max\{\alpha-1,\beta-1\}}}}\log\frac{1}{\epsilon})$. We also apply our method for the scenario where both the upper- and lower-level objectives are generalized nonsmooth convex functions.
\item We present adaptive versions of PB-APG and PB-APG-sc with warm-start, which dynamically adjust the penalty parameters, and solve the associated penalized problem with adaptive accuracy. The adaptive ones have similar complexity results as their primal counterparts but can achieve superior performance in some experiments.
\end{itemize}

Utilizing the penalization method to address the original SBO problem is a novel approach. While Tikhonov regularization may seem similar to our framework, its principles differ. Implementing Tikhonov regularization necessitates the "slow condition" ($\lim_{k\to \infty}\sigma_k = 0, \sum_{k=0}^\infty \sigma_k = +\infty$), which requires iterative solutions for each iteration. In contrast, our method simply involves solving a single optimization problem \eqref{pb:penal} for a given $\gamma$. Furthermore, we establish a relationship between the approximate solutions of the original bilevel problem and those of the reformulated single-level problem \eqref{pb:penal} for a specific $\gamma$. This is the first theoretical result connecting the original bilevel problem to the penalization problem, accompanied by an optimal non-asymptotic complexity result.

\section{The penalization framework}
We begin by outlining specific assumptions for $F$ and $G$, as detailed below.
\begin{assumption}
\label{ass:Lip}
The set $S := \bigcup_{\x\in X_{\text{opt}}} \partial F(\mathbf{x})$ is bounded with a diameter $l_F:=\max_{\xi\in S}\|\xi\|$.

\end{assumption}
Note that the type of subdifferential $\partial F$ used here is the most general form for a convex function, as detailed in \cite[Section 4.2]{bertsekas2003convex}. When the upper-level objective $F$ is non-convex, we replace the assumption with the condition that the upper-level objective is Lipschitz continuous (cf. Theorems \ref{thm:main noncon} and \ref{thm:main_variant}).
\begin{assumption}[H{\"{o}}lderian error bound]
\label{holderian}
The function $p(\x) := G(\x)-G^*$ satisfies the H{\"{o}}lderian error bound with exponent $\alpha \geq 1$ and $\rho > 0$. Namely,
\small{
\[
\text{dist}(\x,X_{\text{opt}})^{\alpha}\leq \rho p(\x), \forall \x\in {\rm dom}(G),
\]
}
where $\text{dist}(\x,X_{\text{opt}}) := \inf_{\y\in X_{\text{opt}}}\|\x-\y\|$.
\end{assumption}

We remark that H{\"{o}}lderian error bounds are satisfied by many practical problems and widely used in optimization literature \citep{1997Error,bolte2017error,zhou2017,roulet2020sharpness,jiang2022holderian}. There are two notable special cases: (i) when $\alpha = 1$, we often refer to $X_{\text{opt}}$ as a set of weak sharp minima of $G$  \citep{burke1993weak,studniarski1999weak,2005Weak,samadi2023achieving}; (ii) when $\alpha = 2$, Assumption \ref{holderian} is known as the quadratic growth condition \citep{2018Error}. Additional examples of functions exhibiting H{\"{o}}lderian error bound, along with their corresponding parameters, are presented in Appendix \ref{exam of holderian}.

We are now ready to establish the connection between approximate solutions of problems \eqref{pb:primal} and \eqref{pb:penal}. The subsequent two lemmas build upon the work of \cite{shen2023penalty} for (general) bilevel optimization. Compared with their work, we generalize the exponent $\alpha$ from $2$ to $\alpha\geq 1$, providing a more general result. Furthermore, we also derive a lower bound for the penalized parameter for all $\alpha\ge 1$ and present a theoretical framework for these scenarios.
\begin{lemma}
\label{lem:alpha>1}
Suppose that Assumptions \ref{ass:Lip} and \ref{holderian} hold with $\alpha>1$. Then, for any $\epsilon > 0$, an optimal solution of problem \eqref{pb:primal} is an $\epsilon$-optimal solution of problem \eqref{pb:penal} when $\gamma \geq \rho l_F^{\alpha}(\alpha-1)^{\alpha-1}\alpha^{-\alpha}\epsilon^{1-\alpha}$.
\end{lemma}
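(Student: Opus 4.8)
The plan is to prove this by fixing an arbitrary optimal solution $\x^*$ of problem \eqref{pb:primal} and bounding the suboptimality gap $\Phi_{\gamma}(\x^*)-\Phi_{\gamma}^*$ directly. Since $\x^*\in X_{\text{opt}}$, we have $p(\x^*)=0$, so $\Phi_{\gamma}(\x^*)=F(\x^*)=F^*$. The task therefore reduces to showing that $\Phi_{\gamma}^*\ge F^*-\epsilon$, i.e. that for every $\x\in{\rm dom}(G)$ we have $F(\x)+\gamma p(\x)\ge F^*-\epsilon$.

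\medskip

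First I would lower-bound $F(\x)$ in terms of its value on the optimal set. Let $\bar{\x}$ denote the projection of $\x$ onto $X_{\text{opt}}$, so that $\|\x-\bar{\x}\|=\text{dist}(\x,X_{\text{opt}})$. By convexity of $F$ and Assumption \ref{ass:Lip}, picking any subgradient $\xi\in\partial F(\bar{\x})\subseteq S$, the subgradient inequality gives
\[
F(\x)\ge F(\bar{\x})+\langle \xi,\x-\bar{\x}\rangle\ge F^*-\|\xi\|\,\|\x-\bar{\x}\|\ge F^*-l_F\,\text{dist}(\x,X_{\text{opt}}),
\]
where I used $F(\bar{\x})\ge F^*$ (as $\bar{\x}\in X_{\text{opt}}$) and the Cauchy--Schwarz inequality together with $\|\xi\|\le l_F$. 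Next I would invoke Assumption \ref{holderian} to convert the distance into the residual: $\text{dist}(\x,X_{\text{opt}})\le(\rho\, p(\x))^{1/\alpha}$. Combining these yields, for all $\x\in{\rm dom}(G)$,
\[
\Phi_{\gamma}(\x)=F(\x)+\gamma p(\x)\ge F^*-l_F(\rho\, p(\x))^{1/\alpha}+\gamma p(\x).
\]

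\medskip

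The crux of the argument is then a one-dimensional minimization: writing $t:=p(\x)\ge 0$, it suffices to show that the scalar function $\psi(t):=\gamma t-l_F\rho^{1/\alpha}t^{1/\alpha}$ satisfies $\psi(t)\ge-\epsilon$ for all $t\ge 0$ (the bound is trivial at $t=0$). Since $\alpha>1$, the exponent $1/\alpha<1$, so $\psi$ is convex in $t$ with a unique interior minimizer found by setting $\psi'(t)=0$: $\gamma=l_F\rho^{1/\alpha}\alpha^{-1}t^{1/\alpha-1}$, giving $t^*=\bigl(l_F\rho^{1/\alpha}/(\gamma\alpha)\bigr)^{\alpha/(\alpha-1)}$. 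Substituting $t^*$ back and simplifying the resulting powers of $\gamma$, $l_F$, $\rho$, and $\alpha$ gives the minimum value $\psi(t^*)=-C\,\gamma^{-1/(\alpha-1)}$ for an explicit constant $C$ depending on $l_F,\rho,\alpha$. Requiring $C\,\gamma^{-1/(\alpha-1)}\le\epsilon$ and solving for $\gamma$ produces exactly the stated threshold $\gamma\ge\rho l_F^{\alpha}(\alpha-1)^{\alpha-1}\alpha^{-\alpha}\epsilon^{1-\alpha}$.

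\medskip

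I expect the main obstacle to be purely the bookkeeping in the last step: tracking the fractional exponents $\alpha/(\alpha-1)$ and $1/(\alpha-1)$ correctly and collecting the constants so that the factor $(\alpha-1)^{\alpha-1}\alpha^{-\alpha}$ emerges cleanly. There is no real analytic difficulty — the convexity of $\psi$ guarantees the critical point is the global minimum, and one must only verify the minimizer $t^*$ is achievable (nonnegative, which it is). A minor care-point is confirming that the projection $\bar{\x}$ onto $X_{\text{opt}}$ exists and the subgradient bound applies, but this is immediate since $X_{\text{opt}}$ is closed and convex and $\partial F(\bar{\x})\subseteq S$ by definition of $S$ in Assumption \ref{ass:Lip}.
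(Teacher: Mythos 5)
Your proposal is correct and follows essentially the same route as the paper's proof: project onto $X_{\text{opt}}$, apply the subgradient inequality with Assumption \ref{ass:Lip}, invoke the H\"olderian error bound, and reduce to a one-dimensional convex minimization whose optimal value yields exactly the threshold $\gamma \geq \rho l_F^{\alpha}(\alpha-1)^{\alpha-1}\alpha^{-\alpha}\epsilon^{1-\alpha}$. The only cosmetic difference is that you minimize in the residual variable $t = p(\x)$ while the paper minimizes in the distance variable $\z = \|\x-\bar{\x}\|$ (bounding the other term via the error bound in the opposite direction), which is a trivial reparameterization giving the same constant.
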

Lemma \ref{lem:alpha>1} establishes the relationship between an optimal solution of problem \eqref{pb:primal} and an $\epsilon$-optimal solution of problem \eqref{pb:penal} when $\alpha>1$. It also provides a lower bound for $\gamma$, which plays a pivotal role in the complexity results. The proofs of this paper are deferred to Appendix \ref{sec:proofs}.

The lemma presented below yields a more favorable outcome when $\alpha = 1$, which is referred to as exact penalization. Notably, this specific result is not discussed in \cite{shen2023penalty}.

\begin{lemma}
\label{lem:alpha=1}
Suppose that Assumptions \ref{ass:Lip} and \ref{holderian} hold with $\alpha=1$.  Then an optimal solution of problem \eqref{pb:primal} is also an optimal solution of problem \eqref{pb:penal} if  $\gamma \ge \rho l_F$, and vice versa if $\gamma > \rho l_F$.  In this case, we say that there is an exact penalization between problems \eqref{pb:primal} and \eqref{pb:penal}.
\end{lemma}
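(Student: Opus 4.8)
The plan is to use the Euclidean projection onto the closed convex set $X_{\text{opt}}$ together with the two hypotheses — the subgradient bound of Assumption \ref{ass:Lip} and the case $\alpha=1$ of Assumption \ref{holderian} (weak sharp minima). Combined, these give a uniform one-sided bound of the form $F(\bar\x)-F(\x)\le l_F\,\mathrm{dist}(\x,X_{\text{opt}})\le \rho l_F\, p(\x)$, where $\bar\x$ is the projection of $\x$, and this single estimate drives both directions.

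For the forward inclusion ($\gamma\ge\rho l_F$), I would let $\x^*$ be an optimal solution of \eqref{pb:primal}, so $\x^*\in X_{\text{opt}}$, $p(\x^*)=0$, and $\Phi_\gamma(\x^*)=F(\x^*)=F^*$. Fixing an arbitrary $\x$, the case $\x\notin\mathrm{dom}(G)$ is trivial since then $p(\x)=+\infty$; otherwise let $\bar\x$ be the unique projection of $\x$ onto $X_{\text{opt}}$, so $\|\x-\bar\x\|=\mathrm{dist}(\x,X_{\text{opt}})$. Choosing $\xi\in\partial F(\bar\x)\subseteq S$ (bounded by $l_F$) and applying the subgradient inequality gives $F(\bar\x)-F(\x)\le\langle\xi,\bar\x-\x\rangle\le l_F\,\mathrm{dist}(\x,X_{\text{opt}})$. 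Since $F^*\le F(\bar\x)$ and Assumption \ref{holderian} yields $\mathrm{dist}(\x,X_{\text{opt}})\le\rho\, p(\x)$, chaining gives $F^*-F(\x)\le \rho l_F\, p(\x)\le\gamma\, p(\x)$, i.e. $\Phi_\gamma(\x^*)=F^*\le\Phi_\gamma(\x)$. Thus $\x^*$ minimizes $\Phi_\gamma$, and as a byproduct $\Phi_\gamma^*=F^*$.

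For the reverse inclusion ($\gamma>\rho l_F$), I would take any minimizer $\x_\gamma^*$ of \eqref{pb:penal}; by the previous step $F(\x_\gamma^*)+\gamma\, p(\x_\gamma^*)=\Phi_\gamma^*=F^*$. Projecting $\x_\gamma^*$ onto $X_{\text{opt}}$ and running the same subgradient estimate in reverse gives $F(\x_\gamma^*)\ge F(\bar\x_\gamma)-l_F\,\mathrm{dist}(\x_\gamma^*,X_{\text{opt}})\ge F^*-l_F\,\mathrm{dist}(\x_\gamma^*,X_{\text{opt}})$. Substituting into the identity and invoking Assumption \ref{holderian} once more, I obtain $\gamma\, p(\x_\gamma^*)\le l_F\,\mathrm{dist}(\x_\gamma^*,X_{\text{opt}})\le \rho l_F\, p(\x_\gamma^*)$. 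If $p(\x_\gamma^*)>0$ this forces $\gamma\le\rho l_F$, contradicting $\gamma>\rho l_F$; hence $p(\x_\gamma^*)=0$, so $\x_\gamma^*\in X_{\text{opt}}$, and then $F(\x_\gamma^*)=F^*$, meaning $\x_\gamma^*$ solves \eqref{pb:primal}.

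The main obstacle, and the place where the hypotheses are genuinely used, is securing the estimate $F(\bar\x)-F(\x)\le l_F\,\mathrm{dist}(\x,X_{\text{opt}})$ uniformly in $\x$: this requires $\partial F(\bar\x)$ to be nonempty and bounded by $l_F$ precisely at points of $X_{\text{opt}}$ (the content of Assumption \ref{ass:Lip}), rather than any global Lipschitz property of $F$, and it must be coupled with the sharpness $\mathrm{dist}(\x,X_{\text{opt}})\le\rho\, p(\x)$ to turn a distance bound into a residual bound. The strict inequality $\gamma>\rho l_F$ is equally delicate: it is exactly what excludes spurious minimizers of $\Phi_\gamma$ outside $X_{\text{opt}}$, whereas at the threshold $\gamma=\rho l_F$ only the forward inclusion is guaranteed.
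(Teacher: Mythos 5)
Your proposal is correct and follows essentially the same route as the paper's proof: project onto $X_{\text{opt}}$, apply the subgradient bound from Assumption \ref{ass:Lip} at the projection point, combine it with the $\alpha=1$ error bound to get $F(\bar\x)-F(\x)\le l_F\,\mathrm{dist}(\x,X_{\text{opt}})\le \rho l_F\,p(\x)$, and use the threshold $\gamma\ge\rho l_F$ (resp.\ $\gamma>\rho l_F$) for the two directions. The only cosmetic differences are that you record $\Phi_\gamma^*=F^*$ from the forward step and argue the reverse direction by contradiction, whereas the paper runs a single chain of inequalities that collapse to equalities; the substance is identical.
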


For simplicity, we define
{\small
\begin{equation}
\label{defofgamma*}
\gamma^*=\left\{
\begin{array}{lcl}
& \rho l_F^{\alpha}(\alpha-1)^{\alpha-1}\alpha^{-\alpha}\epsilon^{1-\alpha} & {\text{if } \alpha>1 }\\
& \rho l_F & {\text{if } \alpha=1}
\end{array}. \right.
\end{equation}}

Based on Lemmas \ref{lem:alpha>1} and \ref{lem:alpha=1}, we give an overall relationship of approximate solutions between problems \eqref{pb:penal} and \eqref{pb:primal}.

\begin{theorem}
\label{thm:main}
Suppose that Assumptions \ref{ass:Lip} and \ref{holderian} hold. For any given $\epsilon>0$ and $\beta>0$, let
{\small
\begin{equation*}
\gamma=\gamma^* + \left\{
\begin{array}{lll}
& 2l_F^{\beta} \epsilon^{1-\beta} & {\text{if } \alpha>1 },\\
& l_F^{\beta} \epsilon^{1-\beta} & {\text{if } \alpha=1 },
\end{array} \right.
\end{equation*}}
with $\gamma^*$ defined in \eqref{defofgamma*}. If $\tilde{\x}^*_{\gamma}$ is an $\epsilon$-optimal solution of problem \eqref{pb:penal}, then $\tilde{\x}^*_{\gamma}$ is an $(\epsilon, l_F^{- \beta}\epsilon^{\beta} )$-optimal solution of problem \eqref{pb:primal}.
\end{theorem}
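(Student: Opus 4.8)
The plan is to derive the two conditions in the definition \eqref{def:optimal_primal} separately, using the structure $\gamma = \gamma^* + \Delta$, where $\Delta = 2l_F^{\beta}\epsilon^{1-\beta}$ (or $l_F^{\beta}\epsilon^{1-\beta}$ when $\alpha=1$) is the extra penalty beyond the threshold $\gamma^*$. The key observation is that $\gamma^*$ is precisely the value (from Lemmas \ref{lem:alpha>1} and \ref{lem:alpha=1}) that makes an optimal solution $\x^*$ of \eqref{pb:primal} an $\epsilon$-optimal solution of the penalized problem \eqref{pb:penal}, i.e., $\Phi_{\gamma^*}^* \ge \Phi_{\gamma^*}(\x^*) - \epsilon = F^* - \epsilon$ since $p(\x^*)=0$. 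Since adding more penalty only increases $\Phi_\gamma$ on points outside $X_{\text{opt}}$ while leaving it unchanged on $X_{\text{opt}}$, I expect the $\epsilon$-optimality transfer to persist for the larger $\gamma$.

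\medskip

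\noindent\textbf{Bounding $G(\tilde\x^*_\gamma)-G^*$.} First I would establish the lower-level (feasibility) bound. Let $\x^*$ be an optimal solution of \eqref{pb:primal}, so $F(\x^*)=F^*$ and $p(\x^*)=0$. Since $\tilde\x^*_\gamma$ is $\epsilon$-optimal for \eqref{pb:penal} and $\x^*$ is feasible for it,
\begin{equation*}
F(\tilde\x^*_\gamma) + \gamma\, p(\tilde\x^*_\gamma) = \Phi_\gamma(\tilde\x^*_\gamma) \le \Phi_\gamma^* + \epsilon \le \Phi_\gamma(\x^*) + \epsilon = F^* + \epsilon.
\end{equation*}
Because $\gamma^*$ already certifies (via the cited lemmas) that $F(\x) + \gamma^* p(\x) \ge F^* - \epsilon$ for all $\x$ when $\alpha>1$ (and $\ge F^*$ when $\alpha=1$), I can write $F(\tilde\x^*_\gamma) \ge F^* - \epsilon - \gamma^* p(\tilde\x^*_\gamma)$ and substitute into the previous display. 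This isolates the $\Delta\, p(\tilde\x^*_\gamma)$ term:
\begin{equation*}
\Delta\, p(\tilde\x^*_\gamma) = (\gamma - \gamma^*)\, p(\tilde\x^*_\gamma) \le 2\epsilon,
\end{equation*}
(with the constant $1$ in place of $2$ in the $\alpha=1$ case, where the $-\epsilon$ slack is absent). Dividing by $\Delta = 2l_F^{\beta}\epsilon^{1-\beta}$ gives $p(\tilde\x^*_\gamma) = G(\tilde\x^*_\gamma)-G^* \le l_F^{-\beta}\epsilon^{\beta}$, which is exactly $\epsilon_G$.

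\medskip

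\noindent\textbf{Bounding $F(\tilde\x^*_\gamma)-F^*$.} For the upper-level bound I would again start from $F(\tilde\x^*_\gamma) + \gamma\, p(\tilde\x^*_\gamma) \le F^* + \epsilon$ and simply drop the nonnegative term $\gamma\, p(\tilde\x^*_\gamma)\ge 0$, yielding $F(\tilde\x^*_\gamma) - F^* \le \epsilon$ directly. The two bounds together deliver the claimed $(\epsilon, l_F^{-\beta}\epsilon^{\beta})$-optimality.

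\medskip

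\noindent The main obstacle is making the lower-bound step fully rigorous and uniform across the regimes. Concretely, I must verify that the inequality $F(\x) + \gamma^* p(\x) \ge F^* - \epsilon$ (for $\alpha>1$) holds not just at the optimum but as a global lower bound used at the specific point $\tilde\x^*_\gamma$; this is where the constant $(\alpha-1)^{\alpha-1}\alpha^{-\alpha}$ and the Hölderian exponent enter, and the derivation in Lemma \ref{lem:alpha>1} must be invoked at exactly the right place to split $\Phi_\gamma = \Phi_{\gamma^*} + \Delta\, p$. Care is also needed with the two different constants ($2$ versus $1$) in the definition of $\gamma$: the factor $2$ in the $\alpha>1$ case absorbs both the $\epsilon$ from $\epsilon$-optimality of $\tilde\x^*_\gamma$ and the $\epsilon$ slack inherited from $\gamma^*$ only being an $\epsilon$-penalization threshold, whereas for $\alpha=1$ the exact penalization of Lemma \ref{lem:alpha=1} removes the second $\epsilon$, so the factor $1$ suffices. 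I would double-check that these bookkeeping constants propagate correctly so that the final $\epsilon_G$ comes out as $l_F^{-\beta}\epsilon^{\beta}$ in both cases.
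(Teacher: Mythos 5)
Your proposal is correct and follows essentially the same route as the paper's proof: apply $\epsilon$-optimality of $\tilde\x^*_\gamma$ at the bilevel optimum $\x^*$ (where $p(\x^*)=0$) to get $\Phi_\gamma(\tilde\x^*_\gamma)\le F^*+\epsilon$, invoke the global lower bound $F(\x)+\gamma^* p(\x)\ge F^*-\epsilon$ (resp. $\ge F^*$ when $\alpha=1$) from the proofs of Lemmas \ref{lem:alpha>1} and \ref{lem:alpha=1} at the point $\x=\tilde\x^*_\gamma$, and isolate $(\gamma-\gamma^*)\,p(\tilde\x^*_\gamma)\le 2\epsilon$ (resp. $\le\epsilon$), which after division by $\gamma-\gamma^*$ gives exactly $l_F^{-\beta}\epsilon^{\beta}$, with the upper-level bound following by dropping $\gamma p(\tilde\x^*_\gamma)\ge 0$. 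The only cosmetic difference is in the $\alpha=1$ case, where the paper argues through the penalized optimizer $\x^*_\gamma$ and both directions of Lemma \ref{lem:alpha=1}, whereas you apply the exact-penalization inequality directly so that both cases share one template; the underlying ingredient and the bookkeeping of the constants $2$ versus $1$ are identical.
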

Particularly, we are also able to establish a lower bound for $F(\tilde{x}^*_{\gamma}) - F^*$ under the same conditions outlined in Theorem \ref{thm:main}.
\begin{theorem}
\label{thm:main-low bound}
Suppose that the conditions in Theorem \ref{thm:main} hold. Then, $\tilde{\x}^*_{\gamma}$ satisfies the following suboptimality lower bound,
\begin{equation*}
F(\tilde{\x}^*_{\gamma})-F^* \geq - l_F(\rho l_F^{-\beta}\epsilon^{\beta})^{\frac{1}{\alpha}}.
\end{equation*}
\end{theorem}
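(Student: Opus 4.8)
The plan is to leverage the feasibility guarantee already supplied by Theorem~\ref{thm:main} together with the Hölderian error bound to control how far $\tilde{\x}^*_{\gamma}$ sits from $X_{\text{opt}}$, and then to translate this distance into a suboptimality lower bound via the subgradient inequality for the convex function $F$.

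First I would invoke Theorem~\ref{thm:main}, which guarantees that $\tilde{\x}^*_{\gamma}$ is an $(\epsilon, l_F^{-\beta}\epsilon^{\beta})$-optimal solution of \eqref{pb:primal}; in particular $p(\tilde{\x}^*_{\gamma}) = G(\tilde{\x}^*_{\gamma}) - G^* \le l_F^{-\beta}\epsilon^{\beta}$. Applying Assumption~\ref{holderian} at $\x = \tilde{\x}^*_{\gamma}$ then yields
\[
\text{dist}(\tilde{\x}^*_{\gamma}, X_{\text{opt}})^{\alpha} \le \rho\, p(\tilde{\x}^*_{\gamma}) \le \rho\, l_F^{-\beta}\epsilon^{\beta},
\]
so that $\text{dist}(\tilde{\x}^*_{\gamma}, X_{\text{opt}}) \le (\rho\, l_F^{-\beta}\epsilon^{\beta})^{1/\alpha}$.

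Next, since $X_{\text{opt}}$ is nonempty, closed, and convex, I would let $\hat{\x}$ denote the projection of $\tilde{\x}^*_{\gamma}$ onto $X_{\text{opt}}$, so that $\|\tilde{\x}^*_{\gamma} - \hat{\x}\| = \text{dist}(\tilde{\x}^*_{\gamma}, X_{\text{opt}})$. Because $\hat{\x} \in X_{\text{opt}}$, it is feasible for \eqref{pb:primal} and hence $F(\hat{\x}) \ge F^*$. Choosing any $\xi \in \partial F(\hat{\x}) \subseteq S$, Assumption~\ref{ass:Lip} bounds $\|\xi\| \le l_F$, and the subgradient inequality for convex $F$ gives $F(\tilde{\x}^*_{\gamma}) \ge F(\hat{\x}) + \langle \xi, \tilde{\x}^*_{\gamma} - \hat{\x}\rangle$. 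Combining these, the Cauchy-Schwarz estimate $\langle \xi, \tilde{\x}^*_{\gamma} - \hat{\x}\rangle \ge -\|\xi\|\,\|\tilde{\x}^*_{\gamma} - \hat{\x}\|$ produces
\[
F(\tilde{\x}^*_{\gamma}) - F^* \ge F(\hat{\x}) - F^* - l_F\,\text{dist}(\tilde{\x}^*_{\gamma}, X_{\text{opt}}) \ge -l_F (\rho\, l_F^{-\beta}\epsilon^{\beta})^{1/\alpha},
\]
which is exactly the claimed bound.

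The argument is essentially mechanical once the distance bound is in hand, so the only point requiring care is the existence of a subgradient $\xi \in \partial F(\hat{\x})$: this is guaranteed implicitly by Assumption~\ref{ass:Lip}, which presupposes that $S = \bigcup_{\x \in X_{\text{opt}}} \partial F(\x)$ is nonempty with finite diameter $l_F$, so that $\partial F$ is nonempty at every point of $X_{\text{opt}}$. I expect this to be the main (though minor) obstacle; everything else reduces to the error-bound estimate followed by a single convexity inequality.
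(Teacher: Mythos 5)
Your proposal is correct and follows essentially the same route as the paper's own proof: bound $p(\tilde{\x}^*_{\gamma})$ by $l_F^{-\beta}\epsilon^{\beta}$ via Theorem~\ref{thm:main}, convert this into a distance bound $\mathrm{dist}(\tilde{\x}^*_{\gamma}, X_{\text{opt}}) \le (\rho\, l_F^{-\beta}\epsilon^{\beta})^{1/\alpha}$ using Assumption~\ref{holderian}, and then combine the projection onto $X_{\text{opt}}$, the feasibility inequality $F(\hat{\x}) \ge F^*$, and the subgradient/Cauchy--Schwarz estimate from Assumption~\ref{ass:Lip} (the paper's inequality \eqref{eq:lip F}) to conclude. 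Your additional remark about the nonemptiness of $\partial F$ on $X_{\text{opt}}$ being implicit in Assumption~\ref{ass:Lip} is a fair point of care that the paper leaves tacit.
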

By setting $\beta= \alpha$, we obtain $F(\tilde{\x}^*_{\gamma})-F^* \geq - \rho^{\frac{1}{\alpha}}\epsilon$. which along with Theorem \ref{thm:main} gives
\begin{equation*}
|F(\tilde{\x}^*_{\gamma})-F^*| \leq \max\{\epsilon, \rho^{\frac{1}{\alpha}}\epsilon\}.
\end{equation*}
We emphasize that the lower bound established in Theorem \ref{thm:main-low bound} is an intrinsic property of problem \eqref{pb:primal} under Assumptions \ref{ass:Lip} and \ref{holderian}. This property is independent of the algorithms we present.


\subsection{Analysis of non-convex upper-level}
\label{addi:nonconvex}
Note that the upper-level objective $F$ is required to be convex in the above context (cf. Theorem \ref{thm:main}). This raises a question: while Theorem \ref{thm:main} establishes the relationship between approximate solutions of problems \eqref{pb:primal} and \eqref{pb:penal}, the distinction between the global or local optimal solutions of problem \eqref{pb:primal} and \eqref{pb:penal} remains unclear when $F$ is non-convex.

We first establish the relationship between global optimal solutions of problems \eqref{pb:primal} and \eqref{pb:penal} when $F$ is non-convex, which is similar to Theorem \ref{thm:main}.
\begin{theorem}
\label{thm:main noncon}
Suppose that Assumption \ref{holderian} holds, $G$ is convex, and $F$ is $l$-Lipschitz continuous on ${\rm dom}(F)$. For any given $\epsilon>0$ and $\beta>0$, let
{\small
\begin{equation}
\label{useofgamma}
\gamma=\gamma^* + \left\{
\begin{array}{lll}
& 2l^{\beta} \epsilon^{1-\beta} & {\text{if } \alpha>1 },\\
& l^{\beta} \epsilon^{1-\beta} & {\text{if } \alpha=1 },
\end{array} \right.
\end{equation}}
where $\gamma^*$ is given by \eqref{defofgamma*}. If $\tilde{\x}^*_{\gamma}$ is an $\epsilon$-global optimal solution of problem \eqref{pb:penal}, then $\tilde{\x}^*_{\gamma}$ is an $(\epsilon, l^{- \beta}\epsilon^{\beta} )$-global optimal solution of problem \eqref{pb:primal}.
\end{theorem}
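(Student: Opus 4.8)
The plan is to run the proof of Theorem \ref{thm:main} essentially verbatim, with one substitution: wherever the convex argument invokes Assumption \ref{ass:Lip} (a bounded subgradient at a point of $X_{\text{opt}}$), I instead invoke the global $l$-Lipschitz continuity of $F$, and I replace the constant $l_F$ by $l$ throughout (in particular inside $\gamma^*$, consistently with the $l^\beta\epsilon^{1-\beta}$ terms appearing in \eqref{useofgamma}). The entire argument uses convexity of $F$ only to produce a one-sided \emph{linear lower bound} for $F$ near $X_{\text{opt}}$; since global Lipschitz continuity supplies exactly such a bound without any convexity, the reasoning carries over and, because all inequalities are against the \emph{global} optimal value $\Phi_\gamma^*$, the conclusion upgrades to global optimality of $\tilde{\x}^*_{\gamma}$ for \eqref{pb:primal}.

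\textbf{Step 1 (upper-level component).} Since $p\equiv 0$ on $X_{\text{opt}}$, I first note $\Phi_\gamma^*=\inf_{\x}\Phi_\gamma(\x)\le \inf_{\x\in X_{\text{opt}}}F(\x)=F^*$, requiring no attainment. Combining this with $\epsilon$-global optimality of $\tilde{\x}^*_{\gamma}$ and $p\ge 0$ gives
\[
F(\tilde{\x}^*_{\gamma})\le \Phi_\gamma(\tilde{\x}^*_{\gamma})\le \Phi_\gamma^*+\epsilon\le F^*+\epsilon,
\]
which is the desired $\epsilon$-bound on $F(\tilde{\x}^*_{\gamma})-F^*$. \textbf{Step 2 (linear lower bound).} Let $\hat{\x}$ be the projection of $\tilde{\x}^*_{\gamma}$ onto the closed convex set $X_{\text{opt}}$. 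Global $l$-Lipschitz continuity together with $F(\hat{\x})\ge F^*$ yields $F(\tilde{\x}^*_{\gamma})\ge F^*-l\,\text{dist}(\tilde{\x}^*_{\gamma},X_{\text{opt}})$, and Assumption \ref{holderian} converts distance into residual, $\text{dist}(\tilde{\x}^*_{\gamma},X_{\text{opt}})\le(\rho\,p(\tilde{\x}^*_{\gamma}))^{1/\alpha}$, so that $F(\tilde{\x}^*_{\gamma})\ge F^*-l(\rho\,p(\tilde{\x}^*_{\gamma}))^{1/\alpha}$.

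\textbf{Step 3 (bounding the residual).} Inserting Step 2 into $F(\tilde{\x}^*_{\gamma})+\gamma\,p(\tilde{\x}^*_{\gamma})=\Phi_\gamma(\tilde{\x}^*_{\gamma})\le F^*+\epsilon$ and writing $t=p(\tilde{\x}^*_{\gamma})\ge 0$ gives $\gamma t\le l\rho^{1/\alpha}t^{1/\alpha}+\epsilon$. For $\alpha=1$ this is linear and rearranges to $(\gamma-\rho l)t\le\epsilon$; since $\gamma-\rho l=l^\beta\epsilon^{1-\beta}$, one gets $t\le l^{-\beta}\epsilon^\beta$. For $\alpha>1$ I dominate the concave term by a linear one: maximizing $g(t)=l\rho^{1/\alpha}t^{1/\alpha}-\gamma^* t$ over $t\ge0$ via its first-order condition $t^*=(l\rho^{1/\alpha}/(\alpha\gamma^*))^{\alpha/(\alpha-1)}$ and checking that the closed form $\gamma^*=\rho l^\alpha(\alpha-1)^{\alpha-1}\alpha^{-\alpha}\epsilon^{1-\alpha}$ is exactly the constant making $\max_{t}g(t)=\epsilon$, which gives $l\rho^{1/\alpha}t^{1/\alpha}\le\gamma^* t+\epsilon$. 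Substituting leaves $(\gamma-\gamma^*)t\le 2\epsilon$, and $\gamma-\gamma^*=2l^\beta\epsilon^{1-\beta}$ again yields $t\le l^{-\beta}\epsilon^\beta$. Together with Step 1 this establishes that $\tilde{\x}^*_{\gamma}$ is an $(\epsilon,l^{-\beta}\epsilon^\beta)$-global optimal solution.

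The main obstacle is the $\alpha>1$ case of Step 3, namely the calibrated Young-type inequality: I must verify that the peculiar closed form of $\gamma^*$ is precisely the coefficient whose extremal gap $\max_t g(t)$ equals $\epsilon$ (rather than merely some multiple of it), as this is what makes the two occurrences of $\epsilon$ in $(\gamma-\gamma^*)t\le 2\epsilon$ consistent with the chosen $\gamma-\gamma^*=2l^\beta\epsilon^{1-\beta}$; the computation of $g(t^*)$ must be carried out with $l$ (not $l_F$) to match the Lipschitz constant from Step 2. A secondary, routine technicality is confirming that the Lipschitz inequality in Step 2 is legitimately applied, i.e. $\tilde{\x}^*_{\gamma}\in\text{dom}(F)$ (forced by $\Phi_\gamma(\tilde{\x}^*_{\gamma})<\infty$) and $\hat{\x}\in X_{\text{opt}}\subseteq\text{dom}(F)$.
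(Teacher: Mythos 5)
Your proposal is correct and follows essentially the paper's own route: Appendix \ref{proof of thm:main noncon} likewise observes that the only role of convexity in Theorem \ref{thm:main} is to supply the linear lower bound \eqref{eq:lip F}, substitutes the Lipschitz bound \eqref{eq:lip F noncon} with $l$ in place of $l_F$, and then reruns Lemmas \ref{lem:alpha>1}, \ref{lem:alpha=1} and the argument of Theorem \ref{thm:main}. The obstacle you flagged is not an issue: the calibration $\max_{t\ge 0}\bigl[l\rho^{1/\alpha}t^{1/\alpha}-\gamma^* t\bigr]=\epsilon$ holds exactly for the $\gamma^*$ in \eqref{defofgamma*}; it is the residual-variable form (via the change of variables $t=\z^{\alpha}/\rho$) of the paper's computation $\min_{\z\ge 0}\bigl[-l\z+\frac{\gamma^*}{\rho}\z^{\alpha}\bigr]=-\epsilon$ in the proof of Lemma \ref{lem:alpha>1}, so the two occurrences of $\epsilon$ combine to $2\epsilon$ exactly as the choice $\gamma-\gamma^*=2l^{\beta}\epsilon^{1-\beta}$ requires. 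Two spots where your execution is in fact slightly cleaner than the paper's: your Step 1 bounds $\Phi_{\gamma}^*\le\inf_{\x\in X_{\text{opt}}}F(\x)=F^*$ and so never needs an attained minimizer of \eqref{pb:primal}, and your Step 3 handles $\alpha=1$ by the same one-line rearrangement as $\alpha>1$, whereas the paper's $\alpha=1$ case detours through the exact-penalization Lemma \ref{lem:alpha=1} and comparisons with an optimal solution $\x^*_{\gamma}$ of \eqref{pb:penal}. The domain technicality you note (the projection $\hat{\x}$ lying in ${\rm dom}(F)$ so that the Lipschitz inequality applies) is an implicit assumption the paper's proof shares, since \eqref{eq:lip F noncon} also evaluates $F$ at the projection.
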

Theorem \ref{thm:main noncon} provides the relationship between the global optimal solutions of problems \eqref{pb:penal} and \eqref{pb:primal}. However, the relationship between local optimal solutions of these problems is more intricate than those of the global ones \citep{shen2023penalty}. Given $r>0$ and $\z\in \R^n$, define $\mathcal{B}(\z,r):=\{\x\in\R^n:\|\x-\z\|\le r\}$. We present the following theorem, which demonstrates that local optimal solutions of problem \eqref{pb:penal} can serve as approximate local optimal solutions of problem \eqref{pb:primal}.
\begin{theorem}
\label{thm:main_variant}
Suppose that Assumption \ref{holderian} holds and $G$ is convex. Let ${\x}^*_{\gamma}$ be a local optimal solution of problem \eqref{pb:penal} on $\mathcal{B}({\x}^*_{\gamma},r)$. Assume $F$ is $l$-Lipschitz continuous on $\mathcal{B}({\x}^*_{\gamma},r)$. Then ${\x}^*_{\gamma}$ is an approximate local optimal solution of problem \eqref{pb:primal} that satisfies $F({\x}^*_{\gamma})-F_{\mathcal{B}}^*\le0$ and $G({\x}^*_{\gamma})-G^*\le\epsilon$ when $\alpha>1$ and $\gamma\ge (\frac{\rho l^{\alpha}}{\epsilon^{\alpha-1}})^{\frac{1}{\alpha}}$, where $F_{\mathcal{B}}^*$ is the optimal value of problem \eqref{pb:primal} on $\mathcal{B}({\x}^*_{\gamma},r)\bigcap X_{{\rm opt}}$. Furthermore, ${\x}^*_{\gamma}$ is a local optimal solution of problem \eqref{pb:primal} when $\alpha=1$ and $\gamma>\rho l$.
\end{theorem}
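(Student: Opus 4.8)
The plan is to prove \Cref{thm:main_variant} by restricting attention to the local neighborhood $\mathcal{B}({\x}^*_{\gamma},r)$ and running an argument parallel to the global case in \Cref{thm:main noncon}, but tracking only the local optimal value $F_{\mathcal{B}}^*$ and the local Lipschitz constant. First I would set up notation: let ${\x}^*_{\gamma}$ be the given local minimizer of $\Phi_\gamma$ on $\mathcal{B}({\x}^*_{\gamma},r)$, and pick a local optimal point $\bar{\x}$ of \eqref{pb:primal} restricted to $\mathcal{B}({\x}^*_{\gamma},r)\cap X_{\rm opt}$, so that $F(\bar{\x})=F_{\mathcal{B}}^*$ and $p(\bar{\x})=0$. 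By local optimality of ${\x}^*_{\gamma}$ for $\Phi_\gamma$ we have
\[
F({\x}^*_{\gamma})+\gamma\,p({\x}^*_{\gamma}) = \Phi_\gamma({\x}^*_{\gamma}) \le \Phi_\gamma(\bar{\x}) = F(\bar{\x}) = F_{\mathcal{B}}^*,
\]
which is the key inequality driving both conclusions.

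Next, for the case $\alpha>1$, I would use the local $l$-Lipschitz continuity of $F$ on $\mathcal{B}({\x}^*_{\gamma},r)$ together with \Cref{holderian} to bound the gap $F({\x}^*_{\gamma})-F_{\mathcal{B}}^*$ from below. Specifically, letting $\p$ denote the projection of ${\x}^*_{\gamma}$ onto $X_{\rm opt}$ (which lies in the neighborhood by a standard nonexpansiveness argument, so that the local Lipschitz bound applies along the segment), we get $F({\x}^*_{\gamma})-F_{\mathcal{B}}^*\ge -l\,\text{dist}({\x}^*_{\gamma},X_{\rm opt})\ge -l(\rho\,p({\x}^*_{\gamma}))^{1/\alpha}$. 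Combining with the key inequality yields $\gamma\, p({\x}^*_{\gamma}) \le l(\rho\, p({\x}^*_{\gamma}))^{1/\alpha}$, hence $p({\x}^*_{\gamma})^{1-1/\alpha}\le l\rho^{1/\alpha}/\gamma$. The stated threshold $\gamma\ge (\rho l^\alpha/\epsilon^{\alpha-1})^{1/\alpha}$ is exactly what forces $p({\x}^*_{\gamma})=G({\x}^*_{\gamma})-G^*\le\epsilon$; one then reinserts this into the key inequality to conclude $F({\x}^*_{\gamma})-F_{\mathcal{B}}^*\le -\gamma p({\x}^*_{\gamma})\le 0$.

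For the case $\alpha=1$ with $\gamma>\rho l$, the argument should mirror the exact-penalization reasoning of \Cref{lem:alpha=1} but localized. Here the $\alpha=1$ error bound gives $\text{dist}({\x}^*_{\gamma},X_{\rm opt})\le\rho\,p({\x}^*_{\gamma})$, so the same chain yields $\gamma\,p({\x}^*_{\gamma})\le l\rho\,p({\x}^*_{\gamma})$; since $\gamma>\rho l$ this can only hold if $p({\x}^*_{\gamma})=0$, i.e. ${\x}^*_{\gamma}\in X_{\rm opt}$. Knowing ${\x}^*_{\gamma}$ is both lower-level optimal and a local minimizer of $\Phi_\gamma=F+\gamma p$ then forces it to be a local minimizer of $F$ over $X_{\rm opt}\cap\mathcal{B}({\x}^*_{\gamma},r)$, which is the definition of a local optimal solution of \eqref{pb:primal}.

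I expect the main obstacle to be the geometric subtlety of working in the local ball: the error-bound and Lipschitz estimates are only assumed on $\mathcal{B}({\x}^*_{\gamma},r)$, so I must verify that the projection of ${\x}^*_{\gamma}$ onto $X_{\rm opt}$ and the whole segment joining them stay inside the ball (or argue via a shrunken radius) so that the local Lipschitz constant $l$ legitimately applies; sloppiness here would invalidate the lower bound on $F({\x}^*_{\gamma})-F_{\mathcal{B}}^*$. The algebraic manipulation of the H\"olderian exponent and the verification that the threshold on $\gamma$ matches the claimed bound are routine by comparison.
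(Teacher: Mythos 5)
There is a genuine gap in your route to the key inequality $\gamma\,p(\x^*_\gamma)\le l\left(\rho\,p(\x^*_\gamma)\right)^{1/\alpha}$. Your lower-bound step $F(\x^*_\gamma)-F_{\mathcal{B}}^*\ge -l\left(\rho\,p(\x^*_\gamma)\right)^{1/\alpha}$ needs the projection $\p$ of $\x^*_\gamma$ onto $X_{\rm opt}$ to lie inside $\mathcal{B}(\x^*_\gamma,r)$ twice over: once so that $\p\in\mathcal{B}(\x^*_\gamma,r)\cap X_{\rm opt}$ makes $F(\p)\ge F_{\mathcal{B}}^*$, and once so that the Lipschitz constant $l$ (assumed only on the ball) applies to the pair $(\x^*_\gamma,\p)$. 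No ``standard nonexpansiveness argument'' delivers this: nonexpansiveness of the projection bounds $\|P(\x)-P(\y)\|$ by $\|\x-\y\|$; it says nothing about $\|\x^*_\gamma-\p\|=\text{dist}(\x^*_\gamma,X_{\rm opt})$, which a priori can exceed $r$. You cannot bound this distance by $(\rho\epsilon)^{1/\alpha}$ without already knowing $p(\x^*_\gamma)\le\epsilon$, which is exactly what you are trying to prove, so that escape is circular. A second defect of the same structure: your key inequality $F(\x^*_\gamma)+\gamma p(\x^*_\gamma)\le F_{\mathcal{B}}^*$ is vacuous when $\mathcal{B}(\x^*_\gamma,r)\cap X_{\rm opt}=\emptyset$ (then $F_{\mathcal{B}}^*=+\infty$), yet the conclusion $G(\x^*_\gamma)-G^*\le\epsilon$ must still be proved in that case; any argument that routes the lower-level bound through $F_{\mathcal{B}}^*$ therefore cannot be complete.

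The paper closes exactly this hole with the one hypothesis your proposal never uses: convexity of $p$ (equivalently, of $G$). Instead of comparing $\Phi_\gamma$ at $\x^*_\gamma$ with its value at the possibly distant projection $\bar{\x}^*_\gamma$, the paper compares it with the interpolated point $\hat{\x}^*_\gamma=c\,\x^*_\gamma+(1-c)\bar{\x}^*_\gamma$, with $c$ chosen so that $\hat{\x}^*_\gamma$ is the segment point at distance $\min\{r,\|\x^*_\gamma-\bar{\x}^*_\gamma\|\}$ from $\x^*_\gamma$, hence inside the ball. Local optimality gives $F(\x^*_\gamma)+\gamma p(\x^*_\gamma)\le F(\hat{\x}^*_\gamma)+\gamma p(\hat{\x}^*_\gamma)$; convexity of $p$ gives $p(\hat{\x}^*_\gamma)\le c\,p(\x^*_\gamma)$ since $p(\bar{\x}^*_\gamma)=0$; and the local Lipschitz bound applies legitimately to $(\x^*_\gamma,\hat{\x}^*_\gamma)$, yielding
\begin{equation*}
\gamma(1-c)\,p(\x^*_\gamma)\;\le\; l(1-c)\,\|\x^*_\gamma-\bar{\x}^*_\gamma\|\;\le\; l(1-c)\left(\rho\,p(\x^*_\gamma)\right)^{1/\alpha},
\end{equation*}
after which cancelling $(1-c)>0$ gives the key inequality with no assumption on where $\bar{\x}^*_\gamma$ lies and no reference to $F_{\mathcal{B}}^*$ at all. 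Your remaining steps --- extracting $p(\x^*_\gamma)\le\epsilon$ from the threshold on $\gamma$, concluding $F(\x^*_\gamma)-F_{\mathcal{B}}^*\le 0$ by testing local optimality against points of $\mathcal{B}(\x^*_\gamma,r)\cap X_{\rm opt}$, and the exact-penalty argument forcing $p(\x^*_\gamma)=0$ when $\alpha=1$ and $\gamma>\rho l$ --- are correct as written once this inequality is in hand.
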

Indeed, the relationship between approximate local optimal solutions of problems \eqref{pb:penal} and \eqref{pb:primal} is more intricate than the connection among global solutions presented in Theorem \ref{thm:main}. These interactions will be the focus of our future work. The proofs of Theorems \ref{thm:main noncon} and \ref{thm:main_variant} are presented in Appendixes \ref{proof of thm:main noncon} and \ref{proof of thm:main_variant}.

\section{Main algorithms}
In this section, we concentrate on addressing problem \eqref{pb:primal}, making various assumptions, and offering distinct convergence outcomes.

\subsection{The objectives are both composite}
In this scenario, we address problem \eqref{pb:primal} where $F$ and $G$ are both composite functions, i.e., $F = f_1 + f_2$ and $G = g_1 + g_2$. 

\begin{assumption}
\label{ass:composite_convex}
$F$ and $G$ satisfy the following assumptions.

(1) The gradient of $f_1(\x)$, denoted as $\nabla f_1$, is $L_{f_1}$-Lipschitz continuous on ${\rm{dom}}(F)$;

(2) The gradient of $g_1(\x)$, denoted as $\nabla g_1$, is $L_{g_1}$-Lipschitz continuous on ${\rm{dom}}(G)$;

(3) $f_2$ and $g_2$ are proper, convex, lower semicontinuous, and possibly non-smooth.
\end{assumption}

We remark that Assumption \ref{ass:composite_convex}(1)(3) is more general than many existing papers in the literature. Specifically, while previous works such as \cite{beck2014first,amini2019iterative,jiang2023conditional,giang2023projection} require the upper-level objective to be smooth or strongly convex, we simply assume that $F$ is a composite function composed of a smooth convex function and a possibly non-smooth convex function. For the lower-level objective, previous works such as \cite{beck2014first,amini2019iterative,jiang2023conditional,giang2023projection} impose smoothness assumptions and, in some cases, convexity and compactness constraints on the domain; while our approach does not require these additional constraints, allowing for more flexibility and generality as presented in Assumption \ref{ass:composite_convex}(2)(3). 

We are now prepared to introduce two algorithms: the penalty-based accelerated proximal gradient (PB-APG) algorithm and its adaptive counterpart, the aPB-APG to solve problem \eqref{pb:penal} and, subsequently, to obtain an $(\epsilon_F,\epsilon_G)$-optimal solution of problem \eqref{pb:primal}.

To simplify notations, we omit the constant term $-\gamma G^*$, and rewrite problem \eqref{pb:penal} as follows,
\begin{equation}
\label{pb:phi_psi}
\min_{\x \in \R^n} \Phi_{\gamma}(\x) := \phi_{\gamma}(\x) + \psi_{\gamma}(\x), \tag{$\text{P}_{\Phi}$}
\end{equation}
where $\phi_{\gamma}(\x) = f_1(\x) + \gamma g_1(\x)$ and $\psi_{\gamma}(\x) = f_2(\x) + \gamma g_2(\x)$ represent the smooth and nonsmooth parts, respectively. Then, it follows that the gradient of $\phi_{\gamma}(\x)$ is $L_{\gamma}$-Lipschitz continuous with $L_{\gamma}=L_{f_1}+\gamma L_{g_1}$.

To implement the APG methods, we need another assumption concerning $\psi_{\gamma}(\x)$.
\begin{assumption}
\label{ass:proxfriend}
For any $\gamma>0$, the function $\psi_{\gamma}(\x)$ is prox-friendly, i.e., the proximal mapping
\[
\text{prox}_{t\psi_{\gamma}}(\y) := \argmin\limits_{\x\in\R^n}\{\psi_{\gamma}(\x) + \frac{1}{2t} \|\x - \y\|^2\},
\]
is easy to compute for any $t>0$.
\end{assumption}
The function $\psi_{\gamma}(\x)$ represents the sum of two non-smooth functions, and proximal mapping for such function sums is widely studied and used in the literature \citep{yu2013decomposing,pustelnik2017proximity,adly2019decomposition,boob2023stochastic,latafat2023adabim}. This assumption is also a more general requirement compared to many existing algorithms \citep{sabach2017first,giang2023projection}.  It is important to note that our assumption is more general than existing literature. In the simple bilevel literature, when employing proximal mappings, researchers often consider the scenario where only one level contains a nonsmooth term (see, e.g., \citep{jiang2023conditional,doron2022methodology,samadi2023achieving,merchav2023convex}). In this case, the proximal mapping of the sum $f_2+\gamma g_2$ is then reduced to the proximal mapping of either $f_2$ or $g_2$, which is a more easily satisfied condition.

\subsubsection{Accelerated proximal gradient-based algorithm}
We apply the APG algorithm \citep{beck2009fast,lin2014adaptive,nesterov2013gradient} to solve problem \eqref{pb:phi_psi}, as outlined in Algorithm \ref{alg:fista}. Moreover, if the Lipschitz constant $L_{\gamma}$ is unknown or computationally infeasible, line search \citep{beck2009fast} can be adopted and will yield almost the same complexity bound. For brevity, we denote Algorithm \ref{alg:fista} as $\hat{\x} = \text{PB-APG}(\phi_{\gamma}, \psi_{\gamma}, L_{f_1}, L_{g_1}, \x_0, \epsilon)$, where $\hat{\x}$ represents an $\epsilon$-optimal solution of \eqref{pb:phi_psi}.
\begin{algorithm}[htp]
\caption{Penalty-based APG (PB-APG)}
\label{alg:fista}
\begin{algorithmic}[1]
\State {\bfseries Input:} $\gamma,L_{\gamma}=L_{f_1}+\gamma L_{g_1}$, $\x_{-1}=\x_0\in \R^n,R>0$, $t_{-1}=t_0=1,k=0, \epsilon>0 $ and $\{t_k\}$.
\For{$k \geq 0 $}
\State $\y_{k}=\x_k+t_k\left( t_{k-1}^{-1}-1 \right)(\x_k-\x_{k-1}) $
\State $\x_{k+1}=\text{prox}_{L_{\gamma}^{-1}\psi_{\gamma}}(\y_k-L_{\gamma}^{-1}\nabla \phi_{\gamma}(\y_k))$
\EndFor
\end{algorithmic}
\end{algorithm}



In Algorithm \ref{alg:fista}, we stop the loop of Line. 3 - 4 if the number of iterations satisfies that:
\small{
\[
\frac{2(L_{f}+\gamma L_{g})R^2}{(k+1)^2}\le \epsilon,
\]
}
where $R$ is a constant that satisfies $\| \x_0-\x^* \|\le R$.

Combining Theorem \ref{thm:main} and \cite[Corollary 2]{tseng2008accelerated}, we establish the following complexity
result for problem \eqref{pb:primal}.
\begin{theorem}
\label{thm:fista}
Suppose that Assumptions \ref{ass:Lip}, \ref{holderian}, \ref{ass:composite_convex} and \ref{ass:proxfriend} hold and the sequence $\{t_k\}$ in Algorithm \ref{alg:fista} satisfies $\frac{1-t_{k+1}}{t_{k+1}^2}\le \frac{1}{t_k^2}$. Let $\gamma$ be given as in Theorem \ref{thm:main}. Algorithm \ref{alg:fista} generates an $(\epsilon,l_F^{-\beta}\epsilon^{\beta})$-optimal solution of problem \eqref{pb:primal} after at most $K$ iterations, where
{\small
\begin{equation*}
K= \mathcal{O}\left( \sqrt{\frac{ L_{f_1}}{ \epsilon } } + \sqrt{\frac {l_F^{\max\{ \alpha,\beta\} }L_{g_1}}{\epsilon^{\max\{\alpha,\beta\}}}}\right).
\end{equation*}
}

\end{theorem}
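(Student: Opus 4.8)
The plan is to chain two ingredients: the penalization guarantee of Theorem~\ref{thm:main}, which transfers accuracy from the single-level problem~\eqref{pb:penal} back to the bilevel problem~\eqref{pb:primal}, and the standard accelerated convergence rate for composite convex minimization applied to~\eqref{pb:phi_psi}. Concretely, fix $\gamma$ exactly as prescribed in Theorem~\ref{thm:main}. Since Assumptions~\ref{ass:Lip} and~\ref{holderian} hold, that theorem says it suffices to produce an $\epsilon$-optimal solution $\tilde{\x}^*_{\gamma}$ of~\eqref{pb:penal}, because any such point is automatically an $(\epsilon, l_F^{-\beta}\epsilon^{\beta})$-optimal solution of~\eqref{pb:primal}. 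Noting that~\eqref{pb:phi_psi} differs from~\eqref{pb:penal} only by the constant $-\gamma G^*$, the whole complexity question reduces to counting how many PB-APG iterations are needed to drive $\Phi_{\gamma}(\x_k)-\Phi_{\gamma}^*$ below $\epsilon$.

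Next I would invoke the convergence rate of the APG iteration. Under Assumption~\ref{ass:composite_convex} the smooth part $\phi_{\gamma}=f_1+\gamma g_1$ has $L_{\gamma}$-Lipschitz gradient with $L_{\gamma}=L_{f_1}+\gamma L_{g_1}$, and under Assumption~\ref{ass:proxfriend} the proximal step on $\psi_{\gamma}$ in Line~4 of Algorithm~\ref{alg:fista} is well defined. Since the momentum sequence obeys $\frac{1-t_{k+1}}{t_{k+1}^2}\le \frac{1}{t_k^2}$, \cite[Corollary 2]{tseng2008accelerated} yields the accelerated bound $\Phi_{\gamma}(\x_k)-\Phi_{\gamma}^*\le \frac{2L_{\gamma}\|\x_0-\x^*\|^2}{(k+1)^2}\le \frac{2L_{\gamma}R^2}{(k+1)^2}$, using $\|\x_0-\x^*\|\le R$. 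Imposing the stopping rule $\frac{2L_{\gamma}R^2}{(k+1)^2}\le \epsilon$ gives $K=\mathcal{O}(\sqrt{L_{\gamma}/\epsilon})=\mathcal{O}(\sqrt{(L_{f_1}+\gamma L_{g_1})/\epsilon})$, and by $\sqrt{a+b}\le\sqrt{a}+\sqrt{b}$ this splits as $K=\mathcal{O}(\sqrt{L_{f_1}/\epsilon}+\sqrt{\gamma L_{g_1}/\epsilon})$.

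It then remains to substitute the explicit value of $\gamma$ from Theorem~\ref{thm:main} and simplify the second term. For $\alpha>1$ we have $\gamma=\rho l_F^{\alpha}(\alpha-1)^{\alpha-1}\alpha^{-\alpha}\epsilon^{1-\alpha}+2l_F^{\beta}\epsilon^{1-\beta}=\mathcal{O}(l_F^{\alpha}\epsilon^{1-\alpha}+l_F^{\beta}\epsilon^{1-\beta})$, treating $\rho,\alpha,\beta$ as fixed constants; for $\alpha=1$ we have $\gamma=\rho l_F+l_F^{\beta}\epsilon^{1-\beta}$, whose leading behavior is again captured by $l_F^{\max\{1,\beta\}}\epsilon^{1-\max\{1,\beta\}}$. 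In either case $\gamma L_{g_1}/\epsilon=\mathcal{O}(l_F^{\alpha}L_{g_1}\epsilon^{-\alpha}+l_F^{\beta}L_{g_1}\epsilon^{-\beta})$, so applying $\sqrt{a+b}\le\sqrt{a}+\sqrt{b}$ once more and noting that for small $\epsilon$ the larger exponent $\max\{\alpha,\beta\}$ dominates gives $\sqrt{\gamma L_{g_1}/\epsilon}=\mathcal{O}(\sqrt{l_F^{\max\{\alpha,\beta\}}L_{g_1}/\epsilon^{\max\{\alpha,\beta\}}})$, which combined with the $\sqrt{L_{f_1}/\epsilon}$ term is exactly the claimed bound.

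The only genuinely delicate part is this last consolidation: one must verify that collapsing the two additive pieces of $\gamma$ — the $\gamma^*$ term scaling like $\epsilon^{1-\alpha}$ and the correction term scaling like $\epsilon^{1-\beta}$ — into a single $\max\{\alpha,\beta\}$ exponent is legitimate, which relies on the dominance of the larger-exponent term as $\epsilon\to 0$ and on absorbing the fixed factors $\rho$ and $(\alpha-1)^{\alpha-1}\alpha^{-\alpha}$ into $\mathcal{O}(\cdot)$. The two boundary cases $\alpha>1$ and $\alpha=1$ must be checked separately because $\gamma^*$ takes a different form in each, but both reduce to the same final expression; everything else is a routine combination of the cited accelerated rate with Theorem~\ref{thm:main}.
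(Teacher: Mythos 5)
Your proposal is correct and follows essentially the same route as the paper's own proof: both apply the accelerated rate $\Phi_{\gamma}(\x_K)-\Phi_{\gamma}^*\le 2L_{\gamma}R^2/(K+1)^2$ to the penalized problem, invoke Theorem~\ref{thm:main} to transfer $\epsilon$-optimality of \eqref{pb:penal} into $(\epsilon, l_F^{-\beta}\epsilon^{\beta})$-optimality of \eqref{pb:primal}, and then substitute the prescribed $\gamma$ to obtain $K=\mathcal{O}\bigl(\sqrt{L_{f_1}/\epsilon}+\sqrt{\gamma L_{g_1}/\epsilon}\bigr)$. The only cosmetic difference is that the paper carries out the final consolidation via an explicit case analysis ($\beta<\alpha$, $\beta=\alpha$, $\beta>\alpha$, separately for $\alpha>1$ and $\alpha=1$), whereas you argue dominance of the larger-exponent term directly, which is the same reasoning.
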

Note that Theorem \ref{thm:fista} encompasses all possible relationships between the magnitudes of $\epsilon_F$ and $\epsilon_G$ in \eqref{def:optimal_primal}, as $\alpha \geq 1$ and $\beta > 0$ are arbitrary. Specially, if $\alpha=1$ and $\beta\leq \alpha$, the number of iterations is $K=\mathcal{O}\left(\sqrt{(L_{f_1}+l_F L_{g_1})/{\epsilon}}\right)$. This result matches the lower bound complexity for unconstrained smooth or convex composite optimization \citep{nemirovskij1983problem,woodworth2016tight}. Additionally, if $g_1\equiv0$, the number of iterations for obtaining an $(\epsilon,\epsilon^{\beta})$-optimal solution of problems \eqref{pb:primal} is independent of $\gamma$, which can be improved to $K = \mathcal{O}( \sqrt{{L_{f_1}}/{\epsilon }})$.
\begin{remark}
It is noteworthy that Theorem 1 in a previous paper \cite{samadi2023achieving} provides the first method that needs $\mathcal{O}(\sqrt{(L_{g_1}+l_FL_{g_1})/\epsilon})$ iterations to achieve an $(\epsilon,\epsilon)$ solution if $\alpha = 1$ and $F$ is smooth. Nevertheless, our methodology diverges in various respects. First, our approach is rooted in the penalization formulation of problem \eqref{pb:val-func}, while the approach proposed by \cite{samadi2023achieving} is based on the Tikhonov regularization \citep{Tikhonov1977}. Second, we provide a theoretical framework that clearly delineates the relationship between approximate solutions of problems \eqref{pb:primal} and \eqref{pb:penal} for all cases of $\alpha \geq 1$ and $F$ is non-convex, as indicated in Lemmas \ref{lem:alpha>1}, \ref{lem:alpha=1} and Theorems \ref{thm:main}, \ref{thm:main noncon}, \ref{thm:main_variant}. Therefore, we can first shift our focus from \eqref{pb:primal} to \eqref{pb:penal} based on the penalization framework and then use various methods to solve \eqref{pb:penal}, not limited to using the APG methods. Besides, the association between approximate solutions of problem \eqref{pb:primal} and \eqref{pb:penal} differs significantly based on whether $\alpha > 1$ or $\alpha = 1$. For the case of $\alpha > 1$, the lower bound comprehensively integrates the accuracy parameter $\epsilon$, which results in a more sophisticated analysis of the convergence result, while \cite{samadi2023achieving} did not consider the situation when $\alpha>1$. Third, our method applies to the case that $F$ is composite, while \cite{samadi2023achieving} requires $F$ to be smooth. Finally, we also propose an adaptive version of our algorithm (see \cref{alg:modifiedFISTA}) that does not require an estimate of $\gamma$.
\end{remark}

\subsubsection{Adaptive version with warm-start mechanism}
\label{sec:unknown}
In practice, the penalty parameter $\gamma$ might be difficult to determine. This motivates us to propose Algorithm \ref{alg:modifiedFISTA}, which adaptively updates $\gamma$ and invokes PB-APG with dynamic $\gamma$ and solution accuracies.
\begin{algorithm}[htp]
\caption{Adaptive PB-APG method (aPB-APG)}
\label{alg:modifiedFISTA}
\begin{algorithmic}[1]
\State {\bfseries Input:} $\x_{0}\in \R^n$, $\gamma_0 = \gamma_{1} > 0$, $ L_{f_1},L_{g_1}$, $\nu>1, \eta>1$, $\epsilon_0>0$.
\For{$k \geq 0 $}
\State $\phi_k(\x) = f_1(\x) + \gamma_k g_1(\x)$
\State $\psi_k(\x) = f_2(\x) + \gamma_k g_2(\x)$
\State Invoke $\x_{k} = \text{PB-APG}(\phi_k, \psi_k, L_{f_1}, L_{g_1}, \x_{k-1}, \epsilon_k)$
\State $\epsilon_{k+1}=\epsilon_{k} /{\eta}$
\State $\gamma_{k+1}=\nu \gamma_{k}$
\EndFor
\end{algorithmic}
\end{algorithm}
 
In Algorithm \ref{alg:modifiedFISTA}, we adaptively update the penalty parameter $\gamma_k$, and invoke the PB-APG to generate an approximate solution for \eqref{pb:penal} with accuracy $\epsilon = \epsilon_k$. Meanwhile, a warm-start mechanism is employed, meaning that the initial point for each subproblem is the output of the preceding subproblem. The convergence result of Algorithm \ref{alg:modifiedFISTA} is as follows.
\begin{theorem}
\label{thm:restartfista}
Suppose that Assumptions \ref{ass:Lip}, \ref{holderian}, \ref{ass:composite_convex}, and \ref{ass:proxfriend} hold. Also assume that for every outcome of inner loop in Algorithm \ref{alg:modifiedFISTA}, $\|\x_k-\x_k^*\|\le R$. Let $\epsilon_0 > 0$ be given.
\begin{itemize}
\item When $\alpha>1$, set $\nu>\eta^{\alpha-1}$, and define $N := \lceil\log_{\eta^{1-\alpha}\nu}(\rho L_F^{\alpha}(\alpha-1)^{\alpha-1}\alpha^{-\alpha}\epsilon_0^{1-\alpha}/\gamma_0) \rceil_+$ and $\gamma^*_k := \rho L_F^{\alpha}(\alpha-1)^{\alpha-1}\alpha^{-\alpha}\epsilon_0^{1-\alpha}\eta^{k(\alpha-1)}$.
\item When $\alpha=1$, set $\nu>1$, and define $N := \lceil\log_{\nu}({\rho l_F}/{\gamma_0} )\rceil_+$ and $\gamma^*_k := \rho L_F$.
\end{itemize}
Then, for any $k\geq N$, Algorithm \ref{alg:modifiedFISTA} generates an $(\frac{\epsilon_{0}}{\eta^k},\frac{2\epsilon_0}{\eta^k(\gamma_0 \nu^k - \gamma^*_k)})$-optimal solution of problem \eqref{pb:primal} after at most $K$ iterations, where $K$ satisfies
{\small
\begin{equation*}
K= \mathcal{O}\left( \sqrt{\frac{ L_{f_1}\eta^k}{ \epsilon_0 } } + \sqrt{\frac{L_{g_1}\gamma_0 (\eta\nu)^k}{\epsilon_0}}\right).
\end{equation*}}
\end{theorem}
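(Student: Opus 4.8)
The plan is to analyze the algorithm one outer iteration at a time and then sum the inner costs. First, observe that the schedule gives $\gamma_k=\gamma_0\nu^k$ and $\epsilon_k=\epsilon_0\eta^{-k}$, and that the quantity $\gamma^*_k$ in the statement is exactly the threshold $\gamma^*$ of \eqref{defofgamma*} evaluated at $\epsilon=\epsilon_k$: since $\epsilon_k^{1-\alpha}=\epsilon_0^{1-\alpha}\eta^{k(\alpha-1)}$, we get $\gamma^*_k=\rho l_F^{\alpha}(\alpha-1)^{\alpha-1}\alpha^{-\alpha}\epsilon_k^{1-\alpha}$ when $\alpha>1$ and $\gamma^*_k=\rho l_F$ when $\alpha=1$ (here $L_F$ in the statement denotes the Assumption \ref{ass:Lip} constant $l_F$). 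I would then check that $k\ge N$ is equivalent to $\gamma_k\ge\gamma^*_k$: rearranging $\gamma_0\nu^k\ge\rho l_F^{\alpha}(\alpha-1)^{\alpha-1}\alpha^{-\alpha}\epsilon_0^{1-\alpha}\eta^{k(\alpha-1)}$ into $(\nu\eta^{1-\alpha})^k\ge\rho l_F^{\alpha}(\alpha-1)^{\alpha-1}\alpha^{-\alpha}\epsilon_0^{1-\alpha}/\gamma_0$ and taking logarithms with base $\eta^{1-\alpha}\nu>1$ (positive precisely because $\nu>\eta^{\alpha-1}$) reproduces the definition of $N$; the $\alpha=1$ case is the same computation with base $\nu$.

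Second, I would establish the per-iteration accuracy for $k\ge N$. Let $\x^*$ be optimal for \eqref{pb:primal}; since $\x^*\in X_{\text{opt}}$ we have $\Phi_{\gamma_k}(\x^*)=F^*$, hence $\Phi_{\gamma_k}^*\le F^*$. Because $\x_k$ is an $\epsilon_k$-optimal solution of \eqref{pb:penal} at parameter $\gamma_k$, writing $t:=p(\x_k)\ge 0$ gives $F(\x_k)+\gamma_k t\le F^*+\epsilon_k$. Dropping the nonnegative term $\gamma_k t$ yields the upper-level bound $F(\x_k)-F^*\le\epsilon_k=\epsilon_0\eta^{-k}$ directly. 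For the lower-level bound I would reuse the estimate behind Theorem \ref{thm:main-low bound}: taking a subgradient of $F$ at the projection of $\x_k$ onto $X_{\text{opt}}$, Assumption \ref{ass:Lip} and convexity give $F^*-F(\x_k)\le l_F\,\text{dist}(\x_k,X_{\text{opt}})$, and Assumption \ref{holderian} gives $\text{dist}(\x_k,X_{\text{opt}})\le(\rho t)^{1/\alpha}$, so that $\gamma_k t\le l_F\rho^{1/\alpha}t^{1/\alpha}+\epsilon_k$.

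The crux is to convert this inequality into the claimed residual bound, and this is where the extremal characterization of $\gamma^*_k$ enters — I expect this to be the only substantive step. I would split $\gamma_k=\gamma^*_k+(\gamma_k-\gamma^*_k)$ to obtain $(\gamma_k-\gamma^*_k)t\le\big(l_F\rho^{1/\alpha}t^{1/\alpha}-\gamma^*_k t\big)+\epsilon_k$ and bound the bracket by its maximum over $t\ge 0$. The key fact, verified by a one-variable maximization, is that $\gamma^*_k$ is chosen exactly so that $\max_{t\ge 0}\{\,l_F\rho^{1/\alpha}t^{1/\alpha}-\gamma^*_k t\,\}=\epsilon_k$ for $\alpha>1$ (and $=0$ for $\alpha=1$, where the bracket is identically $0$); this is the very property that makes $\gamma^*$ the threshold in Lemma \ref{lem:alpha>1}. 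Hence $(\gamma_k-\gamma^*_k)t\le 2\epsilon_k$, i.e. $p(\x_k)\le 2\epsilon_k/(\gamma_k-\gamma^*_k)=2\epsilon_0/\big(\eta^k(\gamma_0\nu^k-\gamma^*_k)\big)$, certifying $\x_k$ as $(\epsilon_0\eta^{-k},\,2\epsilon_0/(\eta^k(\gamma_0\nu^k-\gamma^*_k)))$-optimal. (One checks this is consistent with Theorem \ref{thm:main}: substituting $\gamma_k-\gamma^*_k=2l_F^{\beta}\epsilon_k^{1-\beta}$ recovers $l_F^{-\beta}\epsilon_k^{\beta}$; the uniform factor $2$ is tight for $\alpha>1$ and loose by $2$ for $\alpha=1$.) The only delicate bookkeeping is that $\gamma_k-\gamma^*_k$ may be small near $k=N$, which I would note is strictly positive and geometrically increasing for $k\ge N$ since $\nu\eta^{1-\alpha}>1$.

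Finally, for the iteration count I would sum the costs of the inner PB-APG calls. The $j$-th call solves \eqref{pb:phi_psi} with smoothness constant $L_{\gamma_j}=L_{f_1}+\gamma_j L_{g_1}$ to accuracy $\epsilon_j$ from a warm start at distance at most $R$, so by the accelerated rate (Theorem \ref{thm:fista} / \cite[Corollary 2]{tseng2008accelerated}) it terminates within $\mathcal{O}\big(\sqrt{L_{\gamma_j}R^2/\epsilon_j}\big)=\mathcal{O}\big(\sqrt{L_{f_1}\eta^j/\epsilon_0}+\sqrt{\gamma_0 L_{g_1}(\eta\nu)^j/\epsilon_0}\big)$ inner iterations, using $\sqrt{a+b}\le\sqrt a+\sqrt b$ together with $\gamma_j=\gamma_0\nu^j$ and $\epsilon_j=\epsilon_0\eta^{-j}$. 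Summing over $j=0,\dots,k$ and noting that both $\sum_j(\sqrt{\eta})^j$ and $\sum_j(\sqrt{\eta\nu})^j$ are geometric series dominated by their last term (as $\eta>1$ and $\eta\nu>1$) gives the stated total $K=\mathcal{O}\big(\sqrt{L_{f_1}\eta^k/\epsilon_0}+\sqrt{L_{g_1}\gamma_0(\eta\nu)^k/\epsilon_0}\big)$. This last step is routine; the real content remains the extremal property of $\gamma^*_k$ used in the lower-level bound.
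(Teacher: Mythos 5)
Your proposal is correct and follows essentially the same route as the paper's proof: the same verification that $k\ge N$ is equivalent to $\gamma_k\ge\gamma^*_k$, the same per-iteration accuracy bound — your inequality $(\gamma_k-\gamma^*_k)\,p(\x_k)\le 2\epsilon_k$ via the extremal property of $\gamma^*_k$ is exactly the content of inequalities \eqref{lowepsi} and \eqref{uppepsi} that the paper invokes from the proofs of Lemma \ref{lem:alpha>1} and Theorem \ref{thm:main}, after the change of variables $t=z^{\alpha}/\rho$ — and the same geometric summation of the warm-started inner APG costs. The only cosmetic difference is that you re-derive the accuracy estimate inline at $\x_k$ instead of citing those earlier inequalities, which does not change the argument.
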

Theorem \ref{thm:restartfista} shows that for any given initial accuracy $\epsilon_0 > 0$, Algorithm \ref{alg:modifiedFISTA} can produce an approximate solution of problem \eqref{pb:primal} with the desired accuracy.

\begin{remark}
\label{remark of modifiFISTA}
From Theorem \ref{thm:restartfista}, one can obtain an $(\epsilon,\frac{\epsilon}{\gamma_0 \nu^k - \gamma^*_k})$-optimal solution of problem \eqref{pb:primal} within $\mathcal{O}( \sqrt{{L_{f_1}}/{ \epsilon }} + \sqrt{{L_{g_1}}/{\epsilon^{\alpha}}})$ iterations when ${\epsilon}/{\eta}\leq {\epsilon_0}/{\eta^k}\leq \epsilon$, which is similar to the complexity results in Theorem \ref{thm:fista}. 
\end{remark}

\subsubsection{The upper-level objective is strongly convex}
We investigate the convergence outcomes when the smooth part of the upper-level objective exhibits strong convexity.
\begin{assumption}
\label{ass:strong}
$f_1(\x)$ is $\mu$-strongly convex on ${\rm{dom}}(F)$ with $\mu>0$.
\end{assumption}
Assumption \ref{ass:strong} is another widely adopted setting in the existing SBO literature \citep{beck2014first,sabach2017first,amini2019iterative,merchav2023convex}. Here, we propose a variant of PB-APG that can provide better complexity results than existing methods. Our main integration is an APG-based algorithm, which has been studied in the existing literature \citep{nesterov2013gradient,lin2014adaptive,xu2022first}. In this paper, we adopt the algorithm proposed in \cite{lin2014adaptive} and modify it with a constant step-size for simplicity as in Algorithm \ref{alg:fista-strong}. Similar to Algorithm \ref{alg:fista}, we denote Algorithm \ref{alg:fista-strong} by $\hat{\x} = \text{PB-APG-sc}(\phi_{\gamma}, \psi_{\gamma}, \mu, L_{f_1}, L_{g_1}, \y_0, \epsilon)$.
\begin{algorithm}[htp]
\caption{PB-APG method for Strong Convexity Case (PB-APG-sc)}
\label{alg:fista-strong}
\begin{algorithmic}[1]
\State {\bfseries Input:} $\mu$, $\gamma$, $L_{\gamma}=L_{f_1}+\gamma L_{g_1}$, $\x_{-1},\y_0\in \R^n$.
\State $\tilde{\y}=\y_0-L_{\gamma}^{-1}\nabla \phi_{\gamma}(\x_{-1}) $
\State $\tilde{\x}=\text{prox}_{L_{\gamma}^{-1}\psi_{\gamma}}( \tilde{\y}-L_{\gamma}^{-1}\nabla \phi_{\gamma}( \tilde{\y}))$
\State {\bfseries Initialization:} Let $\x_{-1}=\x_0=\tilde{\x}$, $k=0$
\For{$k \geq 0 $}
\State $\y_{k}=\x_k+\frac{\sqrt{L_{\gamma}} - \sqrt{\mu}}{\sqrt{L_{\gamma}} + \sqrt{\mu}}(\x_k-\x_{k-1}) $
\State $\x_{k+1}=\text{prox}_{L_{\gamma}^{-1}\psi_{\gamma}}(\y_k-L_{\gamma}^{-1}\nabla \phi_{\gamma}(\y_k))$
\EndFor
\end{algorithmic}
\end{algorithm}

The convergence analysis of Algorithm \ref{alg:fista-strong} is in the existing literature \citep{nesterov2013gradient,lin2014adaptive}. Combining \cite[Theorem 1]{lin2014adaptive} and Theorem \ref{thm:main}, we have the following complexity result.

\begin{theorem}
\label{thm:strongFista}
Suppose that Assumptions \ref{ass:Lip}, \ref{holderian}, \ref{ass:composite_convex}, \ref{ass:proxfriend}, and \ref{ass:strong} hold. Algorithm \ref{alg:fista-strong} can produce an $(\epsilon,l_F^{-\beta}\epsilon^{\beta})$-optimal solution of problem \eqref{pb:primal} after at most $K$ iterations, where $K$ satisfies
{\small
\begin{equation*}
K= {\mathcal{O}}\left(\sqrt{\frac{ L_{f_1}}{ \mu }}\log\frac{1}{\epsilon} + \sqrt{\frac {l_F^{\max\{ \alpha,\beta\} }L_{g_1}}{\epsilon^{\max\{\alpha-1,\beta-1\}}}}\log\frac{1}{\epsilon}\right).
\end{equation*}}
\end{theorem}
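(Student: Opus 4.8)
The plan is to combine the penalization guarantee of Theorem~\ref{thm:main} with the linear convergence rate of the accelerated proximal gradient method for strongly convex composite objectives, which is supplied by \cite[Theorem 1]{lin2014adaptive}. Since $\gamma$ is taken exactly as prescribed in Theorem~\ref{thm:main}, any $\epsilon$-optimal solution $\tilde{\x}^*_{\gamma}$ of the penalized problem \eqref{pb:phi_psi} is automatically an $(\epsilon, l_F^{-\beta}\epsilon^{\beta})$-optimal solution of \eqref{pb:primal}. Hence it suffices to bound the number of iterations Algorithm~\ref{alg:fista-strong} needs to reach $\Phi_{\gamma}(\x_k)-\Phi_{\gamma}^*\le\epsilon$, and then substitute the prescribed value of $\gamma$ into that bound.

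First I would verify that the hypotheses of the strongly convex APG analysis hold for \eqref{pb:phi_psi}. Under Assumption~\ref{ass:composite_convex}, the smooth part $\phi_{\gamma}=f_1+\gamma g_1$ has $L_{\gamma}$-Lipschitz gradient with $L_{\gamma}=L_{f_1}+\gamma L_{g_1}$; under Assumption~\ref{ass:strong} the function $f_1$ is $\mu$-strongly convex, and since $\gamma g_1$ is convex for $\gamma>0$, the composite smooth part $\phi_{\gamma}$ inherits $\mu$-strong convexity. The nonsmooth part $\psi_{\gamma}=f_2+\gamma g_2$ is proper, convex, and lower semicontinuous, and is prox-friendly by Assumption~\ref{ass:proxfriend}. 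Invoking \cite[Theorem 1]{lin2014adaptive} then yields linear convergence $\Phi_{\gamma}(\x_k)-\Phi_{\gamma}^*\le(1-\sqrt{\mu/L_{\gamma}})^k C_0$, where $C_0$ depends polynomially on $L_{\gamma}$ and on $\|\x_0-\x^*\|^2\le R^2$; consequently the number of iterations to attain accuracy $\epsilon$ is $\mathcal{O}(\sqrt{L_{\gamma}/\mu}\,\log(C_0/\epsilon))$.

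Next I would substitute $L_{\gamma}=L_{f_1}+\gamma L_{g_1}$ and use the elementary inequality $\sqrt{a+b}\le\sqrt{a}+\sqrt{b}$ to split $\sqrt{L_{\gamma}/\mu}\le\sqrt{L_{f_1}/\mu}+\sqrt{\gamma L_{g_1}/\mu}$. It then remains to control the $\gamma$-dependent term. Plugging in the value of $\gamma$ from Theorem~\ref{thm:main}, we have $\gamma=\gamma^*+\mathcal{O}(l_F^{\beta}\epsilon^{1-\beta})$, and by the definition of $\gamma^*$ in \eqref{defofgamma*} the dominant scaling as $\epsilon\to0$ is $\gamma=\mathcal{O}(l_F^{\max\{\alpha,\beta\}}\epsilon^{1-\max\{\alpha,\beta\}})$ uniformly in both the $\alpha>1$ and $\alpha=1$ regimes (when $\alpha=1$ and $\beta\le1$ the constant term $\rho l_F$ dominates, which is consistent since then $\max\{\alpha-1,\beta-1\}=0$). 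Using $1-\max\{\alpha,\beta\}=-\max\{\alpha-1,\beta-1\}$, this gives $\sqrt{\gamma L_{g_1}/\mu}=\mathcal{O}(\sqrt{l_F^{\max\{\alpha,\beta\}}L_{g_1}/\epsilon^{\max\{\alpha-1,\beta-1\}}})$, and the two contributions combine, after multiplying through by the logarithmic factor, into the claimed bound $K$.

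The main obstacle I anticipate is bookkeeping the value of $\gamma$ uniformly across the two regimes and confirming that the logarithmic factor does not secretly absorb extra powers of $1/\epsilon$. Since $C_0$ grows only polynomially in $L_{\gamma}$ and $R^2$, and $\gamma$ is itself polynomial in $1/\epsilon$, we have $\log(C_0/\epsilon)=\mathcal{O}(\log\frac{1}{\epsilon})$, so the $\log\frac{1}{\epsilon}$ factor multiplying each term is legitimate. Care is also needed in matching the exponent $1-\max\{\alpha,\beta\}$ carried by $\gamma$ to the exponent $\max\{\alpha-1,\beta-1\}$ that appears in the denominator under the square root; this is precisely the identity noted above, and it is what makes the two-term structure of the final complexity bound emerge cleanly.
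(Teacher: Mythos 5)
Your proposal is correct and follows essentially the same route as the paper's proof: both invoke the linear rate of \cite[Theorem 1]{lin2014adaptive} for the penalized problem (the paper packages this, together with the initialization step, as Lemma \ref{lem:strongFista} with constant $C_0 = \tfrac{L_\gamma+\mu}{2}R^2$), then combine with Theorem \ref{thm:main}, split $\sqrt{L_\gamma/\mu}$ via $L_\gamma = L_{f_1}+\gamma L_{g_1}$, and substitute the prescribed $\gamma$. The only difference is cosmetic: the paper checks the six sub-cases ($\alpha>1$ or $\alpha=1$, crossed with $\beta<,=,>\alpha$) explicitly, whereas you compress them into the single identity $1-\max\{\alpha,\beta\}=-\max\{\alpha-1,\beta-1\}$, with the same conclusion.
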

Theorem \ref{thm:strongFista} improves the complexity results of Theorem \ref{thm:fista} significantly. Specifically, when $0<\beta\leq \alpha = 1$, the convergence rate can be improved to be linear, i.e., $K=\mathcal{O}(\sqrt{{L_{f_1}}/{\mu}}\log\frac{1}{\epsilon})$.

Additionally, we present an adaptive variant of PB-APG-sc, termed aPB-APG-sc, which adaptively executes $\x_{k} = \text{PB-APG-sc}(\phi_k, \psi_k, \mu, L_{f_1}, L_{g_1}, \x_{k-1}, \epsilon_k)$ and enjoys the similar complexity results of Algorithm \ref{alg:fista-strong}, as delineated in Algorithm \ref{alg:fista-strong ada} within Appendix \ref{addi:adaptive-sc}.

\subsection{The objectives are both non-smooth}
In this section, we focus on the scenario where both the upper- and lower-level objectives are non-smooth, namely, $f_1=g_1\equiv 0$. Additionally, we assume that there is a point $x\in C$ in the lower level problem, where $C$ is either $\R^n$ (the unconstrained case) or a nonempty closed and convex set satisfying $C \subseteq {\rm int} \left({\rm dom}(F)\bigcap{\rm dom}(G)\right)$. 

It is worth noting that in the case where both $F$ and $G$ are non-smooth, the convergence result may not be as favorable as those in the previous scenarios. This is primarily due to the limited availability of information and unfavorable properties concerning $F$ and $G$. In this case, we employ a subgradient method to solve problem \eqref{pb:penal}, which has been extensively studied in the existing literature \citep{shor2012minimization,bubeck2015convex,beck2017first,nesterov2018lectures}. Specifically, we update 
\begin{equation}
\label{update of subg}
\x_{k+1}={\rm Proj}_{C}(\x_{k}-\eta_k \xi_k),
\end{equation}
where $\xi_{k}\in \partial \Phi_{\gamma}(\x_k)$ is an subgradient of $\Phi_{\gamma}(\x_k)$, and ${\rm Proj}_{C}(\x)$ is the projection of $\x$ onto $C$.

Let $\x_{\gamma}^*$ be  an optimal solution of problem \eqref{pb:penal} and suppose that there exists a constant $R$ such that $\|\x_0 - \x_{\gamma}^*\|\leq R$. Motivated by Theorem 8.28 in \cite{beck2017first}, we establish the subsequent complexity result for problem \eqref{pb:primal}.
\begin{theorem}
\label{thm:subgradient}
Suppose that Assumption \ref{ass:composite_convex}(3) holds, $f_2$ and $g_2$ are $l_{f_2}$- and $l_{g_2}$-Lipschitz continuous, respectively. Set step-size $\eta_k = \frac{R}{l_{\gamma}\sqrt{k+1}}$ in \eqref{update of subg}. Then, the subgradient method produces an $(\epsilon,l_{f_2}^{-\beta}\epsilon^{\beta})$-optimal solution of problem \eqref{pb:primal} after at most $K$ iterations, where $K$ satisfies
{\small
\begin{equation*}
K= \mathcal{O}\left( {\frac{ l_{f_2}^2}{ \epsilon^2 } } + {\frac {l_{f_2}^{ \max\{ 2\alpha,2\beta\} }l_{g_2}^2}{\epsilon^{ \max\{2\alpha,2\beta\}}}}\right).
\end{equation*}}
\end{theorem}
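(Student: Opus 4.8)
The plan is to reduce Theorem \ref{thm:subgradient} to two ingredients that are already available: a standard convergence guarantee for the projected subgradient method applied to the penalized problem \eqref{pb:penal}, and the solution-transfer result of Theorem \ref{thm:main}. Since $f_1 = g_1 \equiv 0$, we have $F = f_2$ and $G = g_2$, so the penalized objective (after discarding the constant $-\gamma G^*$) is $\Phi_\gamma = f_2 + \gamma g_2$, which is convex and $l_\gamma$-Lipschitz on $C$ with $l_\gamma := l_{f_2} + \gamma l_{g_2}$. I would first observe that the $l_{f_2}$-Lipschitz continuity of $f_2$ makes Assumption \ref{ass:Lip} automatic with $l_F = l_{f_2}$, since every subgradient of a Lipschitz convex function is bounded in norm by its Lipschitz constant; this is what lets me invoke Theorem \ref{thm:main} later with $l_F = l_{f_2}$.

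\textbf{Step one.} I would run the projected subgradient recursion \eqref{update of subg} with $\eta_k = R/(l_\gamma\sqrt{k+1})$ on $\Phi_\gamma$. Invoking the cited bound (Theorem 8.28 in \cite{beck2017first}) for the best iterate gives $\Phi_\gamma(\x_{\text{best}}) - \Phi_\gamma^* = \mathcal{O}(l_\gamma R/\sqrt{k+1})$, where $R \ge \|\x_0 - \x_\gamma^*\|$. Requiring the right-hand side to be at most $\epsilon$ and solving for the index yields an $\epsilon$-optimal solution of \eqref{pb:penal} after $\mathcal{O}(l_\gamma^2 R^2/\epsilon^2)$ iterations.

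\textbf{Step two.} Choosing $\gamma$ exactly as in Theorem \ref{thm:main} (with $l_F = l_{f_2}$), that theorem turns this $\epsilon$-optimal solution of \eqref{pb:penal} into an $(\epsilon, l_{f_2}^{-\beta}\epsilon^\beta)$-optimal solution of \eqref{pb:primal}, which is the desired guarantee. It then remains to convert the count $\mathcal{O}(l_\gamma^2/\epsilon^2)$ (absorbing $R$) into the stated form. I would bound $l_\gamma^2 = (l_{f_2}+\gamma l_{g_2})^2 \le 2 l_{f_2}^2 + 2\gamma^2 l_{g_2}^2$ and substitute the value of $\gamma$. For $\alpha > 1$ one has $\gamma = \mathcal{O}(l_{f_2}^\alpha\epsilon^{1-\alpha} + l_{f_2}^\beta\epsilon^{1-\beta})$, so that $\gamma^2 = \mathcal{O}(l_{f_2}^{2\alpha}\epsilon^{2-2\alpha} + l_{f_2}^{2\beta}\epsilon^{2-2\beta})$ after discarding the cross term by AM-GM; dividing by $\epsilon^2$ and using $\max\{\epsilon^{-2\alpha}, \epsilon^{-2\beta}\} = \epsilon^{-\max\{2\alpha,2\beta\}}$ produces the second term $l_{f_2}^{\max\{2\alpha,2\beta\}}l_{g_2}^2/\epsilon^{\max\{2\alpha,2\beta\}}$, while the $l_{f_2}^2$ piece gives $l_{f_2}^2/\epsilon^2$. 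The case $\alpha = 1$ is identical once $\gamma = \mathcal{O}(l_{f_2} + l_{f_2}^\beta\epsilon^{1-\beta})$ is substituted, noting that the $\gamma^*$ contribution there is $\mathcal{O}(l_{f_2})$ and hence matches the $\alpha=1$ instance of the exponent $\max\{2\alpha,2\beta\}$.

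The main obstacle I anticipate is not conceptual but bookkeeping: correctly merging the $\beta$- and $\alpha$-dependent contributions of $\gamma$ so that the exponent collapses to $\max\{2\alpha, 2\beta\} = 2\max\{\alpha,\beta\}$, and confirming that the cross term in $\gamma^2$ is genuinely dominated. A secondary technical point is the logarithmic factor that the dynamic step size $\eta_k = R/(l_\gamma\sqrt{k+1})$ introduces in the best-iterate bound; I would either absorb it into the $\mathcal{O}$ notation or, more cleanly, use the fixed-horizon constant step to obtain a log-free $\mathcal{O}(l_\gamma R/\sqrt{k})$ rate so that the final count is exactly $\mathcal{O}(1/\epsilon^2)$ as stated.
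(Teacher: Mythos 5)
Your proposal is correct and follows essentially the same route as the paper's proof: a log-free best-iterate subgradient bound for $\Phi_\gamma = f_2+\gamma g_2$ (the paper reproduces Beck's Theorem 8.28 via Lemma 8.24), transfer to \eqref{pb:primal} through Theorem \ref{thm:main} with $l_F = l_{f_2}$, and then substitution of the prescribed $\gamma$ with bookkeeping over the cases of $\alpha$ and $\beta$. Your anticipated logarithmic-factor obstacle does not require switching to a fixed-horizon constant step: with the stated dynamic step size, the paper (following Beck) restricts the best-iterate minimum to indices $k \ge \lfloor K/2 \rfloor$, so that $\sum_k \eta_k^2\|\xi_k\|^2 \le 2R^2\log 2$ and $\sum_k \eta_k \ge \frac{R}{2l_\gamma}\sqrt{K+2}$ over that tail, yielding exactly the log-free $\mathcal{O}\left(l_\gamma R/\sqrt{K}\right)$ rate you need.
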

For non-smooth SBO problems, our method has lower complexity compared to existing approaches. Specifically, under a bounded domain assumption, \cite{helou2017} simply proposed an $\epsilon$-subgradient method with an asymptotic rate towards the optimal solution set. The a-IRG method in \cite{kaushik2021method} achieved convergence rates of $\mathcal{O}(1/{\epsilon}^{\frac{1}{0.5-b}})$ and $\mathcal{O}(1/{\epsilon}^{\frac{1}{b}})$ for the upper- and lower-level objectives, respectively, where $b\in (0,0.5)$. Setting $b = 0.25$ yields the convergence rates of $\mathcal{O}(1/{\epsilon}^{4})$ for both upper- and lower-level objectives, which indicates that our complexity is more efficient than theirs when $\alpha<2$ and $\beta\leq \alpha$. Furthermore, the online framework proposed in \cite{shen2023online} performed a complexity of ${\mathcal{O}}(1/{\epsilon}^{3})$ for both upper- and lower-level objectives. Similarly, our approach prevails over theirs when $\alpha<1.5$ and $\beta\leq \alpha$.

\paragraph{Strongly convex upper-level objective.} Based on Theorem 8.31 in \cite{beck2017first}, we next explore the improved complexity result for problem \eqref{pb:primal} when $f_2$ is additionally strongly convex. 
\begin{theorem}
\label{thm:subgradientstrong}
Suppose that Assumption \ref{ass:composite_convex}(3) holds, $C \subseteq {\rm int} \left({\rm dom}(F)\bigcap{\rm dom}(G)\right)$, $f_2$ is $l_{f_2}$-Lipschitz continuous and $\mu_{f_2}$-strongly convex\footnote{In this case, we must have \(C\) bounded, as \(f_2\) is both strongly convex and Lipschitz continuous.}, and $g_2$ is $l_{g_2}$-Lipschitz continuous. Choose step-size $\eta_k = \frac{2}{\mu_{f_2}(k+1)}$ in \eqref{update of subg}. Then, the subgradient method produces an $(\epsilon,l_{f_2}^{-\beta}\epsilon^{\beta})$-optimal solution of problem \eqref{pb:primal} after at most $K$ iterations, where $K$ satisfies
{\small
\begin{equation*}
K= \mathcal{O}\left({\frac{ l_{f_2}^2}{\mu_{f_2} \epsilon } } + {\frac {l_{f_2}^{\max\{ 2\alpha,2\beta\} }l_{g_2}^2}{\mu_{f_2}\epsilon^{\max\{2\alpha-1,2\beta-1\}}}}\right).
\end{equation*}}
\end{theorem}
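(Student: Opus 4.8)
The plan is to mirror the argument for Theorem~\ref{thm:subgradient}, replacing the generic subgradient convergence rate with its strongly convex counterpart. Since $f_1 = g_1 \equiv 0$ throughout this section, the penalized objective collapses to $\Phi_{\gamma} = f_2 + \gamma g_2$. First I would record two structural facts about $\Phi_{\gamma}$. Because $f_2$ is $\mu_{f_2}$-strongly convex and $\gamma g_2$ is merely convex, $\Phi_{\gamma}$ inherits $\mu_{f_2}$-strong convexity; and because every subgradient of $\Phi_{\gamma}$ has the form $\xi_{f_2} + \gamma \xi_{g_2}$ with $\|\xi_{f_2}\| \le l_{f_2}$ and $\|\xi_{g_2}\| \le l_{g_2}$, the function $\Phi_{\gamma}$ is $l_{\gamma}$-Lipschitz on $C$ with $l_{\gamma} := l_{f_2} + \gamma l_{g_2}$. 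The inclusion $C \subseteq {\rm int}({\rm dom}(F)\cap{\rm dom}(G))$ guarantees these subgradients exist, and (as noted in the footnote) strong convexity together with Lipschitz continuity forces $C$ to be bounded, so the reference minimizer $\x_{\gamma}^*$ and a finite radius $R$ are well defined.

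Next I would invoke the convergence guarantee for the projected subgradient method on a strongly convex Lipschitz objective, namely \cite[Theorem 8.31]{beck2017first}, with the prescribed step-size $\eta_k = \frac{2}{\mu_{f_2}(k+1)}$. This produces a weighted-average iterate whose suboptimality obeys $\Phi_{\gamma}(\cdot) - \Phi_{\gamma}^* = \mathcal{O}(l_{\gamma}^2/(\mu_{f_2}(k+1)))$. Forcing this bound below $\epsilon$ shows that an $\epsilon$-optimal solution of \eqref{pb:penal} is reached after $\mathcal{O}(l_{\gamma}^2/(\mu_{f_2}\epsilon))$ iterations. At this point Theorem~\ref{thm:main}, applied with $l_F$ replaced by $l_{f_2}$ (a valid bound on the subdifferential diameter of $F=f_2$), converts this $\epsilon$-optimal solution of \eqref{pb:penal}, under the stated choice of $\gamma$, into an $(\epsilon, l_{f_2}^{-\beta}\epsilon^{\beta})$-optimal solution of \eqref{pb:primal}.

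The remaining and most delicate step is substituting the explicit $\gamma$ from Theorem~\ref{thm:main} into $l_{\gamma}^2 = (l_{f_2} + \gamma l_{g_2})^2$ and simplifying. Using $(a+b)^2 \le 2(a^2+b^2)$ splits the iteration count into a $\gamma$-independent term $\mathcal{O}(l_{f_2}^2/(\mu_{f_2}\epsilon))$ and a term proportional to $\gamma^2 l_{g_2}^2/(\mu_{f_2}\epsilon)$. For $\alpha>1$ we have $\gamma = \mathcal{O}(l_{f_2}^{\alpha}\epsilon^{1-\alpha} + l_{f_2}^{\beta}\epsilon^{1-\beta})$, hence $\gamma^2 = \mathcal{O}(l_{f_2}^{2\alpha}\epsilon^{2-2\alpha} + l_{f_2}^{2\beta}\epsilon^{2-2\beta})$, and dividing by $\epsilon$ yields the two exponents $\epsilon^{-(2\alpha-1)}$ and $\epsilon^{-(2\beta-1)}$. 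The main obstacle is to bound the sum of these two contributions by the single advertised expression $l_{f_2}^{\max\{2\alpha,2\beta\}}l_{g_2}^2/(\mu_{f_2}\epsilon^{\max\{2\alpha-1,2\beta-1\}})$: as $\epsilon \to 0$ the term with the larger exponent dominates, and collecting the prefactors under the same maximum gives the claimed bound. The case $\alpha=1$ is analogous, with $\gamma^* = \rho l_{f_2}$ contributing only a constant factor so that the dominant dependence again comes from the $\beta$-term. The computation is routine once the strongly convex subgradient rate is in place; the only care needed is the consistent treatment of the two regimes $\alpha>1$ and $\alpha=1$ and the passage to the combined maximum.
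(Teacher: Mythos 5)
Your proposal follows essentially the same route as the paper's proof: invoke the strongly convex projected-subgradient rate of \cite[Theorem 8.31]{beck2017first} with step-size $\eta_k = \frac{2}{\mu_{f_2}(k+1)}$ to get an $\epsilon$-optimal solution of \eqref{pb:penal} in $\mathcal{O}\bigl(l_{\gamma}^2/(\mu_{f_2}\epsilon)\bigr)$ iterations with $l_{\gamma}=l_{f_2}+\gamma l_{g_2}$, convert it via Theorem \ref{thm:main} (with $l_F = l_{f_2}$) into an $(\epsilon, l_{f_2}^{-\beta}\epsilon^{\beta})$-optimal solution of \eqref{pb:primal}, and then substitute the prescribed $\gamma$ and identify the dominant term across the regimes $\alpha>1$, $\alpha=1$ and $\beta \lessgtr \alpha$. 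The only cosmetic difference is that the paper tracks the best iterate $\Phi_{\gamma,\mathrm{best}}^{K}$ rather than an averaged iterate, and it identifies the dominant term in $\gamma$ before squaring instead of using $(a+b)^2\le 2(a^2+b^2)$ afterwards; both yield the identical bound.
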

To our knowledge, within the context of Theorem \ref{thm:subgradientstrong}, current findings fail to exploit strong convexity to enhance results. However, our approach capitalizes on distinct structural characteristics that yield superior complexity outcomes relative to Theorem \ref{thm:subgradient} in cases where $\alpha<2$ and $\beta\leq \alpha$.

\section{Numerical experiments} 
\label{experiment}
We apply our Algorithms \ref{alg:fista}, \ref{alg:modifiedFISTA}, \ref{alg:fista-strong} and \ref{alg:fista-strong ada} to two simple bilevel optimization problems from the motivating examples in Appendix \ref{mot exa}. The performances of our methods are compared with several existing methods: MNG \citep{beck2014first}, BiG-SAM \citep{sabach2017first}, DBGD \citep{gong2021bi}, a-IRG \citep{kaushik2021method}, CG-BiO \citep{jiang2023conditional}, Bi-SG \citep{merchav2023convex} and R-APM \citep{samadi2023achieving}. For practical efficiency, we use the Greedy FISTA algorithm proposed in \cite{liang2022improving} as the APG method in our approach. Detailed settings and additional experimental results are presented in Appendix \ref{add resu for exper}.

\begin{figure}[htp!]
\centering
\begin{minipage}{0.49\linewidth}
\centering
\includegraphics[width=0.49\linewidth]{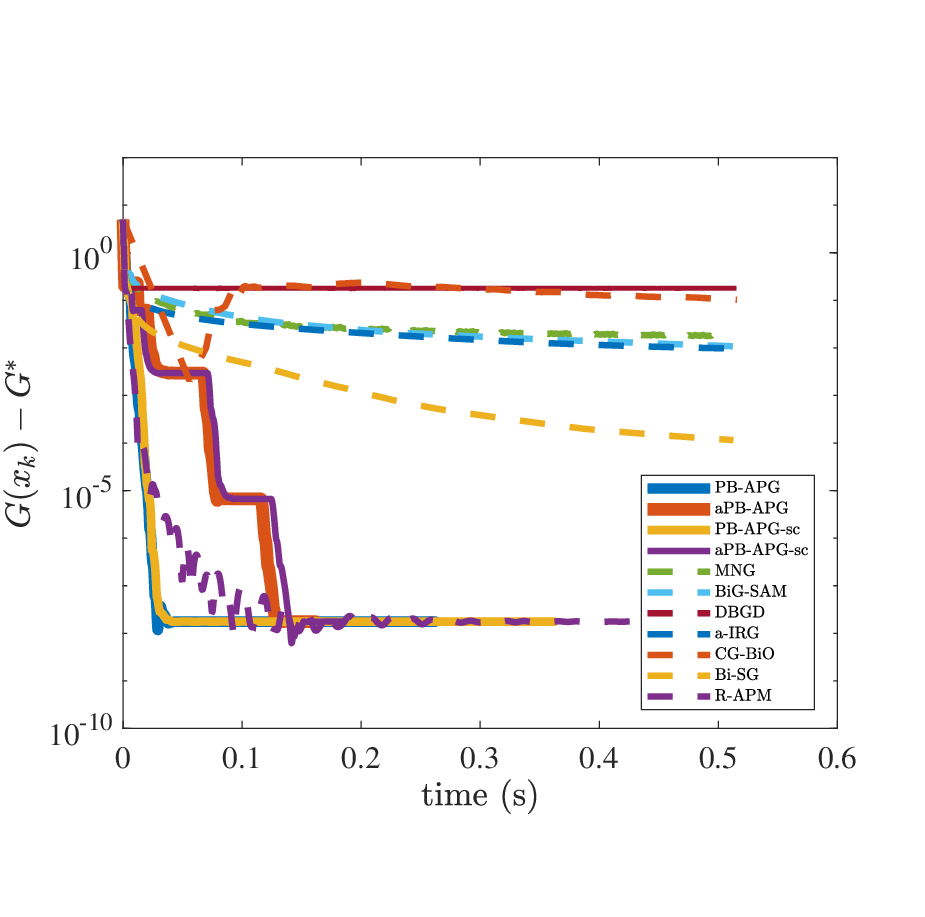}
\includegraphics[width=0.49\linewidth]{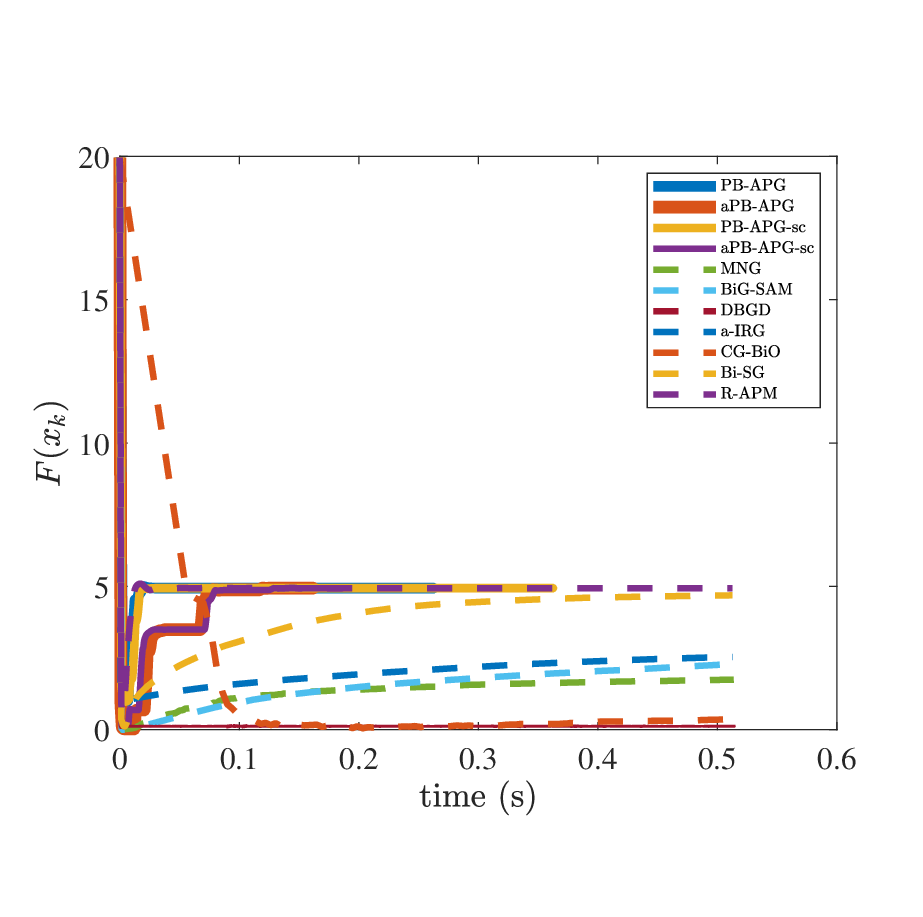}
\vspace{-0.5cm}
\caption{Performances of methods in LRP.}
\label{logistic-figure}
\end{minipage}
\begin{minipage}{0.49\linewidth}
\centering
\includegraphics[width=0.49\linewidth]{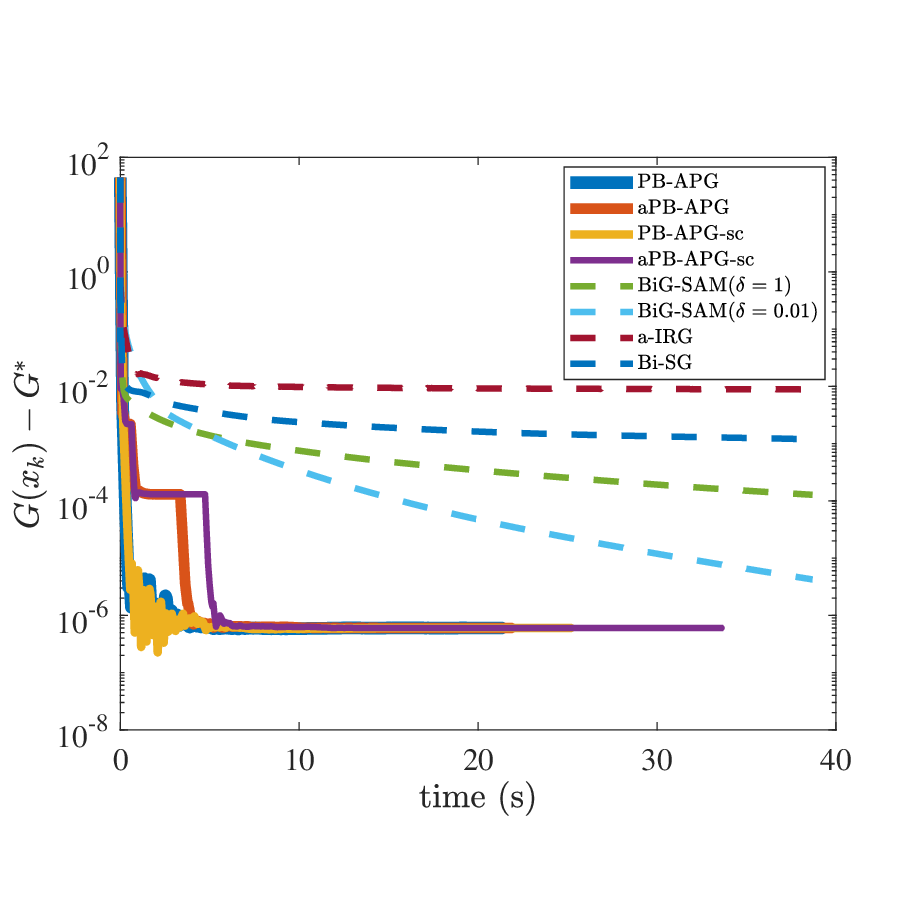}
\includegraphics[width=0.49\linewidth]{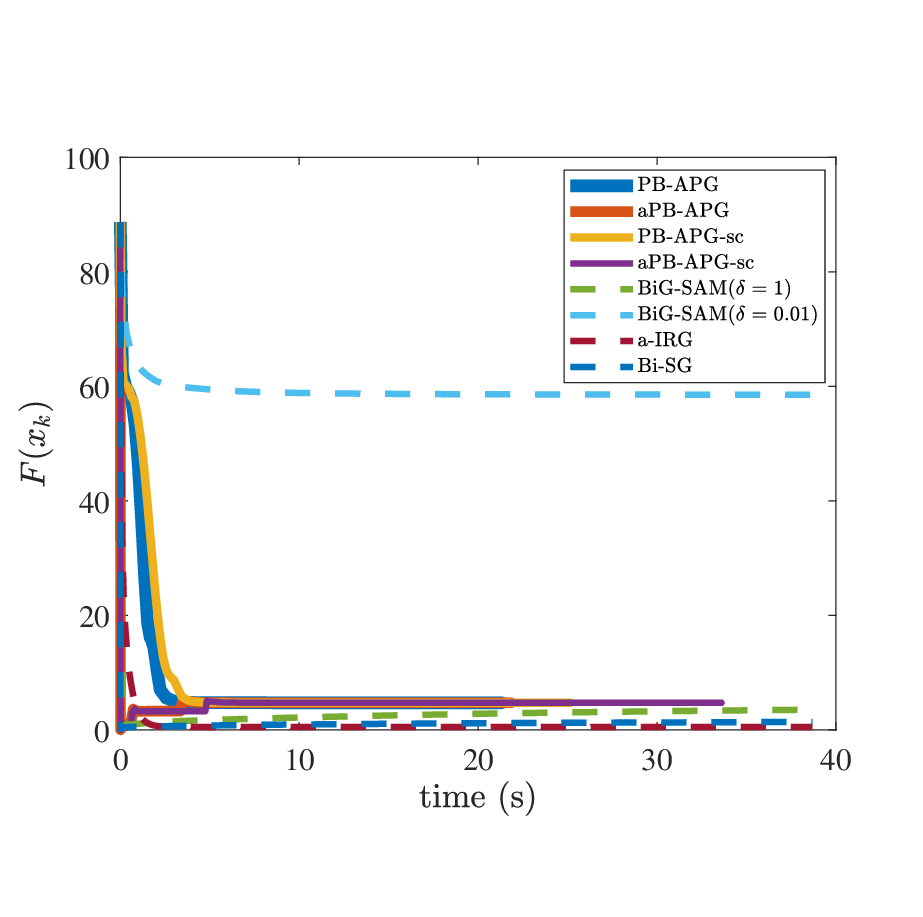}
\vspace{-0.5cm}
\caption{Performances of methods in LSRP.}
\label{or-figure}
\end{minipage}
\end{figure}

\subsection{Logistic regression problem (LRP)}
\label{logistic regression}
The LRP reads
\begin{equation}
\label{logistic-regression}
\min\limits_{\x\in\R^{n}}\frac{1}{2}\|\x\|^2\quad{\rm{s.t.}}~~\x\in\argmin\limits_{\z\in\R^{n}}\frac{1}{m}\sum\limits_{i=1}^{m}\log(1+\exp(-\mathbf{a}_{i}^{\T}\z b_{i}))+I_C(\z),
\end{equation}
where $I_C(\x)$ is the indicator function of the set $C=\{\x\in \R^{n}: \|\x\|_1\le \theta\}$ with $\theta = 10$. Our goal is to find a solution to the lower-level problem with the smallest Euclidean norm. The upper-level objective only consists of the smooth part, which is $1$-strongly convex and $1$-smooth; meanwhile, the lower-level objective is a composite function, where the smooth part is $\frac{1}{4m}\lambda_{\max}(A^{\T}A)$-smooth, and the nonsmooth part is prox-friendly \citep{duchi2008efficient}.

In this experiment, we compare our methods with MNG, BiG-SAM, DBGD, a-IRG, CG-BiO, and Bi-SG. We plot the values of residuals of the lower-level objective $G(\x_k)-G^*$ and the upper-level objective over time in Figure \ref{logistic-figure}.


As shown in Figure \ref{logistic-figure}, the PB-APG, aPB-APG, PB-APG-sc, and aPB-APG-sc algorithms exhibit significantly faster convergence performance than the other methods for both lower- and upper-level objectives, although R-APM attains similar outcomes, our PB-APG and PB-APG-sc ensure a more rapid decline than it, as shown in the first subfigure of Figure \ref{logistic-figure}. This is because our methods achieve lower optimal gaps and desired function values of the lower- and upper-level objectives with less execution time. This observation confirms the improved complexity results stated in the theorems above. Although the high exactness of our methods for the lower-level problem leads to larger upper-level objectives, Table \ref{table for opti} in Appendix \ref{deta expl} shows that our methods are much closer to the optimal value. This is reasonable because the other methods exhibit lower accuracy at the lower-level problem, resulting in larger feasible sets compared to the lower-level optimal solution set $X_{\text{opt}}$.
In addition, Figure \ref{logistic-figure} demonstrates that aPB-APG and aPB-APG-sc outperform PB-APG and PB-APG-sc in terms of convergence rate. This improvement can be attributed to the adaptiveness incorporated in Algorithms \ref{alg:modifiedFISTA} and \ref{alg:fista-strong ada}.

\subsection{Least squares regression problem (LSRP)}
\label{Over-parameterized Regression}
The LSRP has the following form:
\begin{equation}
\label{linear-regression-elastic}
\min\limits_{\x\in\R^{n}} \frac{\tau}{2}\|\x\|^2 + \|\x\|_1\quad
{\rm{s.t.}}~~\x\in\argmin\limits_{\z\in\R^{n}}\frac{1}{2m}\left\|A\z-b\right\|^2,
\end{equation}
where $\tau = 0.02$ regulates the trade-off between $\ell_1$ and $\ell_2$ norms. We aim to find a sparse solution for the lower-level problem. The upper-level objective is formulated as a composite function, which consists of a $\tau$-strongly convex and $\tau$-smooth component, along with a proximal-friendly non-smooth component \citep{beck2017first}. The lower-level objective is a smooth function with a smoothness parameter of $\frac{1}{m}\lambda_{\max}(A^{\T}A)$.

In this experiment, we compare the performances of our methods with a-IRG, BiG-SAM, and Bi-SG. We plot the values of residuals of lower-level objective $G(\x_k)-G^*$ and the upper-level objective over time in Figure \ref{or-figure}.


Figure \ref{or-figure} shows that the proposed PB-APG, aPB-APG, PB-APG-sc, and aPB-APG-sc converge faster than the compared methods for both the lower- and upper-level objectives, as well. For the upper-level objective, our methods achieve larger function values than other methods, except BiG-SAM ($\delta=0.01$). This is because our methods attain higher accuracy for the lower-level objective than other methods. We have similar observations in Section \ref{logistic regression}. Furthermore, Figure \ref{logistic-figure} also demonstrates that the adaptive mechanism produces staircase-shaped curves for aPB-APG and aPB-APG-sc, which might prevent undesirable fluctuations in PB-APG and PB-APG-sc.

\section{Conclusion}
This paper proposes a penalization framework that effectively addresses the challenges inherent in simple bilevel optimization problems. By delineating the relationship between approximate solutions of the original problem and its penalized reformulation, we enable the application of specific methods under varying assumptions for the original problem. Under the H{\"{o}}lderian error bound condition, our methods achieve superior complexity results compared to the existing methods. The performance is further improved when the smooth component of the upper-level objective is strongly convex. Additionally, we extend our framework to scenarios involving general nonsmooth objectives. Numerical experiments also validate the effectiveness of our algorithms.


\subsection*{Acknowledgements}
\label{Acknowledgements}
This work is partly supported by the National Key R\&D Program of China under grant 2023YFA1009300,  National Natural Science Foundation of China under grants 12171100 and the Major Program of NFSC (72394360,72394364).

\bibliographystyle{plainnat}
\bibliography{Ref}

\newpage

\appendix

\section{Motivating examples}
\label{mot exa}
Many machine learning applications involve a primary objective $G$, which usually represents the training loss, and a secondary objective $F$, which can be a regularization term or an auxiliary loss. A common approach for such problems is to optimize $G$ fully and then use $F$ to select the optimal solutions from the ones obtained for $G$. This is called lexicographic optimization \citep{kissel2020neural,gong2021bi}. Two classes of lexicographic optimization problems are the regularized problem, also known as the ill-posed optimization problem \citep{amini2019iterative,jiang2023conditional}, and the over-parameterized regression \citep{jiang2023conditional}, where the upper-level objectives are the regularization terms or loss functions, and the lower-level objectives are the loss functions and the constraint terms. We present some examples of these classes of problems as follows.

\begin{example}[Linear Inverse Problems]
\label{ex1}
Linear inverse problems aim to reconstruct a vector $\x\in\mathbb{R}^n$ from measurements $b\in\mathbb{R}^m$ that satisfy $b = A\x + \rho\mathbf{\varepsilon}$, where $A:\mathbb{R}^n\rightarrow\mathbb{R}^m$ is a linear mapping, $\mathbf{\varepsilon}\in\mathbb{R}^m$ is unknown noise, and $\rho>0$ is its magnitude. Various optimization techniques can address these problems. We focus on the bilevel formulation, widely adopted in the literature \citep{beck2014first,sabach2017first,dempe2021simple,latafat2023adabim,merchav2023convex}.

The lower-level objective in the bilevel formulation is given by
\begin{equation}
\label{ex-lin}
\begin{split}
G(\x) = \frac{1}{2m}\left\|A\x-b\right\|^2 + I_C(\x),
\end{split}
\end{equation}
where $I_C(\x)$ is the indicator function of a set $C$ that satifies $I_C(\x)=0$ if $\x \in C$, and $I_C(\x)=+\infty$ if $\x \notin C$. The set $C$ is a closed, convex set that can be chosen as $C=\mathbb{R}^{n}$, $C=\{\x\in \mathbb{R}^n: \x\geq 0\}$, or $C=\{\x\in \mathbb{R}^n: \|\x\|_1\leq \theta\}$ for some $\theta>0$.

This problem may have multiple minimizer solutions. Hence, a reasonable option is to consider the minimal norm solution problem, i.e., find the optimal solution with the smallest Euclidean norm \citep{beck2014first,sabach2017first,latafat2023adabim}:
\begin{equation*}
\begin{split}
F(\x) = \frac{1}{2}\left\|\x\right\|^2.
\end{split}
\end{equation*}
We need to solve the simple bilevel optimization problem:
\begin{equation*}
\min\limits_{\x\in\R^{n}}\frac{1}{2}\|\x\|^2\quad
{\rm{s.t.}}~~\x\in\argmin\limits_{\z\in\R^{n}}\frac{1}{2m}\left\|A\z-b\right\|^2 + I_C(\z).
\end{equation*}
\end{example}

\begin{example}[Sparse Solution of Linear Inverse Problems]
\label{ex1-1}
Consider the same setting as in Example \ref{ex1}, but with the additional goal of finding a sparse solution among all the minimizers of the linear inverse problem \eqref{ex-lin}. This can simplify the model and improve computational efficiency. To achieve sparsity, we can use any function that encourages it. One such function is the well-known elastic net regularization \citep{friedlander2008exact,amini2019iterative,merchav2023convex}, which is defined as
\begin{equation*}
\begin{split}
F(\x) = \left\|\x\right\|_1 + \frac{\tau}{2}\left\|\x\right\|^2,
\end{split}
\end{equation*}
where $\tau>0$ regulates the trade-off between $\ell_1$ and $\ell_2$ norms.
\end{example}

This example corresponds to our second experiment in Section \ref{Over-parameterized Regression}.

\begin{example}[Logistic Regression Problem]
\label{ex2}
The logistic regression problem aims to map the feature vectors $\mathbf{a}_i$ to the target labels $b_i$. A standard machine learning technique for this problem is to minimize the logistic loss function over the given dataset \citep{amini2019iterative,gong2021bi,jiang2023conditional,latafat2023adabim,merchav2023convex}. We assume that the dataset consists of a feature matrix $A \in \R^{m \times n}$ and a label vector $b \in \R^{m}$, with $b_i \in \{-1, 1\}$ for each $i$. The logistic loss function is defined as
\begin{equation}
\label{ex-logis}
\begin{split}
g_1(\x) = \frac{1}{m}\sum_{i=1}^{m}\log(1+\exp(-\mathbf{a}_{i}^{\T}\x b_{i})).
\end{split}
\end{equation}
Over-fitting is a common issue when the number of features is large compared to the number of instances $m$. A possible approach is to regularize the logistic objective function with a specific function or a constraint \citep{jiang2023conditional,merchav2023convex}. For instance, we can use $g_2(\x) = I_C(\x)$, where $I_C(\x)$ is the indicator of the set $C=\{\x\in \R^n: \|\x\|_1\leq \theta\}$, as in Example \ref{ex1}.

This problem may also have multiple optimal solutions. Hence, a natural extension is to consider the minimal norm solution problem \citep{gong2021bi,jiang2023conditional,latafat2023adabim}, as in Example \ref{ex1}. This requires solving the following problem:
\begin{equation*}
\min\limits_{\x\in\R^{n}}\frac{1}{2}\|\x\|^2\quad
{\rm{s.t.}}~~\x\in\argmin\limits_{\z\in\R^{n}}\frac{1}{m}\sum\limits_{i=1}^{m}\log(1+\exp(-\mathbf{a}_{i}^{\T}\z b_{i})) + I_C(\z).
\end{equation*}
\end{example}

When choosing $C=\{\x\in \R^n: \|\x\|_1\le \theta\}$ for some $\theta>0$, it corresponds to our first experiment in Section \ref{logistic regression}.

\begin{example}[Over-parameterized Regression Problem]
\label{ex-hs}
The linear regression problem aims to find a parameter vector $\x \in \mathbb{R}^n$ that minimizes the training loss $\ell_{{\rm tr}}(\x)$ over the training dataset $\mathcal{D}_{{\rm tr}}$. Without explicit regularization, the over-parameterized regression problem has multiple minima. However, these minima may have different generalization performance. Therefore, we introduce a secondary objective, such as the validation loss over a validation set $\mathcal{D}_{{\rm val}}$, to select one of the global minima of the training loss. This results in the following bilevel problem:
\begin{equation}
\label{ex-or}
\min\limits_{\x \in \R^n}  F(\x) := \ell_{\rm{val}}(\x) \quad
{\rm s.t.}~~\x \in \argmin\limits_{\z \in \R^n} G(\z) := \ell_{{\rm tr}}(\z).
\end{equation}
For instance, we can consider the sparse linear regression problem, where the lower-level objective consists of the training error and a regularization term, namely, $G(\x) = \frac{1}{2}\|A_{{{\rm tr}}}\x-b_{{{\rm tr}}}\|^2 + I_{C}(\x)$. Here, $I_{C}(\x)$ denotes the indicator of a convex set, as in Example \ref{ex1-1}. The upper-level objective is the validation error, i.e., $F(\x) = \frac{1}{2}\|A_{{{\rm val}}}\x-b_{{{\rm val}}}\|^2$. The linear regression problem is over-parameterized when the number of features $n$ is larger than the number of data instances in the training set.
\end{example}

\section{Comparison between simple bilevel optimization methods}
\label{appendix:related work}
\begin{table*}[htp!]
\centering
\caption{Summary of simple bilevel optimization algorithms. The abbreviations ``SC," ``C,", ``diff", ``comp", ``WS" and ``C3" represent ``strongly convex," ``convex,", ``differentiable", ``composite", ``weak sharpness" and ``Convex objective with Convex Compact constraints," respectively. The abbreviation $\alpha$-HEB refers to H{\"{o}}lderian error bound with exponent parameter $\alpha$. We only include the gradient’s Lipschitz constant in the complexity result when its relation to the complexity is clear; otherwise, we omit it. Notation $l_F$ is the upper bound of subdifferentials of $F$, $L_{f_1}$ and $L_{g_1}$ are the Lipschitz constants of $\nabla f_1$ and $\nabla g_1$, respectively.}
\label{table1}
\resizebox{1.0\textwidth}{!}{%
\begin{tabular}{ccccccc}
\hline
\multirow{2}{*}{Methods} & Upper-level & Lower-level & $(\epsilon_F,\epsilon_G)$-optimal & \multicolumn{2}{c}{Convergence} \\
\cline{5-6}
& Objective $F$ & Objective $G$ & Solution & Upper-level & Lower-level \\
\hline
MNG \citep{beck2014first} & SC, diff & C, smooth & $(/,\epsilon_G )$ & Asymptotic & ${\mathcal{O}}\left( L_{g_1}^2/\epsilon_G^2\right)$ \\
\hline
BiG-SAM \citep{sabach2017first} & SC, smooth & C, comp & $(/,\epsilon_G )$ & Asymptotic & ${\mathcal{O}}\left( L_{g_1}/\epsilon_G\right)$ \\
\hline
IR-IG \citep{amini2019iterative} & SC & C3, Finite sum & $(/,\epsilon_G )$ & Asymptotic & ${\mathcal{O}}\left( 1/\epsilon_G^{\frac{1}{0.5-\varepsilon}}\right)$, $\varepsilon\in(0,0.5)$ \\
\hline
IR-CG \citep{giang2023projection} & C, smooth & C3, smooth &  $(\epsilon_F,\epsilon_G)$ & \multicolumn{2}{c}{$\mathcal{O}\left( \max\{ {1}/{\epsilon_F^{\frac{1}{1-p}}}, {1}/{\epsilon_G^{\frac{1}{p}}} \}  \right)$ $p\in(0,1)$} \\
\hline
Tseng's method \citep{malitsky2017chambolle} & C, comp & C, comp & $(/,\epsilon_G )$ & Asymptotic & ${\mathcal{O}}\left( 1/\epsilon_G\right)$ \\
\hline
ITALEX \citep{doron2022methodology} & C, comp & C, comp & $(\epsilon, \epsilon^2)$ & \multicolumn{2}{c}{ $\mathcal{O}\left( 1/\epsilon^{2} \right)  $ } \\
\hline
a-IRG \citep{kaushik2021method} & C, Lip & C, Lip & $(\epsilon_F,\epsilon_G ) $  & \multicolumn{2}{c}{${\mathcal{O}}\left(\max\{1/\epsilon_F^{\frac{1}{0.5-b}},1/\epsilon_G^{\frac{1}{b}}\} \right)$, $b\in (0,0.5)$} \\
\hline
CG-BiO \citep{jiang2023conditional} & C, smooth & C3, smooth & $(\epsilon_F,\epsilon_G ) $ & \multicolumn{2}{c}{${\mathcal{O}}\left(\max\{L_{f_1}/\epsilon_F,L_{g_1}/\epsilon_G\} \right)$ } \\
\hline
\multirow{2}{*}{\multirow{2}{*}{Bi-SG \citep{merchav2023convex}}} & C, quasi-Lip/comp & C, comp & $(\epsilon_F,\epsilon_G) $ & \multicolumn{2}{c}{$\mathcal{O}\left( \max\{ 1/ \epsilon_F^{\frac{1}{1-a}},1/ \epsilon_G^{\frac{1}{a}} \}\right)$, $a\in(0.5,1)$} \\
\cline{2-6}
& $\mu$-SC, comp & C, comp & $(\epsilon_F,\epsilon_G)$ & \multicolumn{2}{c}{$\mathcal{O} \left(\max\{ \left( \frac{\log 1/\epsilon_F}{\mu} \right)^{\frac{1}{1-a}}, 1/\epsilon_G^{\frac{1}{a}}\}\right),a\in (0.5,1) $} \\
\hline
R-APM \citep{samadi2023achieving} & C, smooth & C, comp, WS & $(\epsilon,\epsilon )$ & \multicolumn{2}{c}{$ \mathcal{O}\left(\sqrt{1/\epsilon}\right) $ } \\
\hline
Online Framework \citep{shen2023online} & C, Lip & C3, Lip & $(\epsilon_F,\epsilon_G)$ &\multicolumn{2}{c}{$ \mathcal{O}\left( \max \{1/\epsilon_F^3 ,1/\epsilon_G^3  \} \right) $ } \\
\hline
\multirow{3}{*}{\multirow{3}{*}{ \bf Our method}} & C, comp & C, comp, $\alpha$-HEB & $(\epsilon,l_F^{-\beta}\epsilon^{\beta})$ & \multicolumn{2}{c}{$\mathcal{O}\left( \sqrt{\frac{ L_{f_1}}{ \epsilon } } + \sqrt{\frac {l_F^{\max\{ \alpha,\beta\} }L_{g_1}}{\epsilon^{\max\{\alpha,\beta\}}}}\right)$, $\alpha\ge 1, \beta> 0$} \\
\cline{2-6}
& $\mu$-SC, comp & C, comp, $\alpha$-HEB & $(\epsilon,l_F^{-\beta}\epsilon^{\beta})$ & \multicolumn{2}{c}{${\mathcal{O}}\left(\sqrt{\frac{ L_{f_1}}{ \mu } } \log\frac{1}{\epsilon} + \sqrt{\frac {l_F^{\max\{ \alpha,\beta\} }L_{g_1}}{\epsilon^{\max\{\alpha-1,\beta-1\}}}} \log\frac{1}{\epsilon} \right)$, $\alpha\ge 1, \beta> 0$} \\
\cline{2-6}
& nonsmooth, Lip & nonsmooth, Lip, $\alpha$-HEB & $(\epsilon,l_F^{-\beta}\epsilon^{\beta})$ & \multicolumn{2}{c}{ $\mathcal{O}\left( {\frac{ l_{f_2}^2}{ \epsilon^2 } }+{\frac {l_{f_2}^{ \max\{ 2\alpha,2\beta\} }l_{g_2}^2}{\epsilon^{ \max\{2\alpha,2\beta\}}}}\right)$, $\alpha\ge 1, \beta> 0$}\\
\hline
\end{tabular}%
}
\end{table*}

\section{Examples of functions satisfying the H{\"{o}}lderian error bound}
\label{exam of holderian}
We present several examples of functions that satisfy the H{\"{o}}lderian error bound Assumption \ref{holderian} and their corresponding exponent parameter $\alpha$ in Table \ref{table2}. We also provide some clarifications for Table \ref{table2} below. The abbreviations ``$Q\in\mathbb{S}^n$'' and ``$Q\succ 0$'' stand for ``$Q$ is a symmetric matrix of order $n$ and a positive definite matrix, respectively. We refer the reader to \cite{1997Error,bolte2017error,zhou2017,jiang2022holderian,doron2022methodology} and the references therein for more examples of functions that satisfy H{\"{o}}lderian error bound Assumption \ref{holderian}. Furthermore, it is noteworthy that numerous applications in neural networks, such as deep neural networks (DNNs), also comply with this assumption, as discussed in \cite{bolte2017error,zeng2019global}.

\begin{table}[htp!]
\centering
\caption{Summary of some functions satisfying H{\"{o}}lderian error bound with corresponding exponents.}
\label{table2}
\resizebox{1.0\textwidth}{!}{
\begin{tabular}{cccc}
\hline
\hline
$G(\x)$ & Remarks & Name & $\alpha$ \\
\hline
$\max_{i \in[m]}\{\left\langle\mathbf{a}_i, \x\right\rangle-b_i\}$ & $\mathbf{a}_i \in \R^n, i \in[m], b \in \R^m$ & piece-wise maximum & 1 \\
$\left\|\x-\x_0\right\|_{{Q}} = \sqrt{(\x-\x_0)^{\T}{Q}(\x-\x_0)}$ & ${Q}\in \mathbb{S}^n, {Q} \succ 0, \x_0 \in \R^n$ & ${Q}$-norm & 1 \\
$\left\|\x-\x_0\right\|_p$ & $\x_0 \in \R^n, p\ge 1 $ & $\ell_p$-norm & 1 \\
$\|x\|_1+\frac{\tau}{2}\|x\|^2$ & $\tau>0$ &  Elastic net & 1 or 2\tablefootnote{ According to Table 2 of \cite{doron2022methodology}, the parameter $\alpha$ can take values of either $1$ or $2$. Particularly, when $\alpha=1$, we have $\rho=1$; when $\alpha=2$, we have $\rho=2/\tau$.
} \\
$\|{A}\x-b\|^2$ & ${A}\in\R^{m\times n}, b \in \R^m$ & Least squares & 2 \\
$\frac{1}{m}\sum_{i=1}^{m}\log(1+\exp(-\mathbf{a}_{i}^{\T}\x b_{i}))$ & $\mathbf{a}_i \in \R^n, i \in[m], b \in \R^m, {A}\in\R^{m\times n}$ & Logistic loss & 2 \\
$\eta(\x)+\frac{\sigma}{2}\|\x\|^2$ & $\eta$ convex, $\sigma>0$ & Strongly-convex & 2\\
\hline
\hline
\end{tabular}
}
\end{table}

\section{Supplementary results}

\subsection{Adaptive version of PB-APG method with strong convexity assumption}
\label{addi:adaptive-sc}
\begin{algorithm}[htp]
\caption{Adaptive PB-APG-sc method (aPB-APG-sc)}
\label{alg:fista-strong ada}
\begin{algorithmic}[1]
\State {\bfseries Input:} $\x_{-1}=\x_{0}\in \R^n$, $\gamma_0 = \gamma_{1} > 0$, $ L_{f_1},L_{g_1}$, $\nu>1, \eta>1$, $\epsilon_0>0$.
\For{$k \geq 0 $}
\State $\phi_k(\x) = f_1(\x) + \gamma_k g_1(\x)$
\State $\psi_k(\x) = f_2(\x) + \gamma_k g_2(\x)$
\State Invoke $\x_{k} = \text{PB-APG-sc}(\phi_k, \psi_k, \mu, L_{f_1}, L_{g_1}, \x_{k-1}, \epsilon_k)$
\State $\epsilon_{k+1}=\frac{1}{\eta} \epsilon_{k}$
\State $\gamma_{k+1}=\nu \gamma_{k}$
\EndFor
\end{algorithmic}
\end{algorithm}

Similar to Algorithm \ref{alg:modifiedFISTA}, we have the following convergence results of Algorithm \ref{alg:fista-strong ada}.
\begin{theorem}
\label{thm:strongFista-ada}
Suppose that Assumptions \ref{ass:Lip}, \ref{holderian}, \ref{ass:composite_convex}, \ref{ass:proxfriend}, and \ref{ass:strong} hold. Let $\epsilon_0 > 0$ be given.
\begin{itemize}
\item When $\alpha>1$, set $\nu>\eta^{\alpha-1}$, $N = \lceil\log_{\eta^{1-\alpha}\nu}(\rho L_F^{\alpha}(\alpha-1)^{\alpha-1}\alpha^{-\alpha}\epsilon_0^{1-\alpha}/\gamma_0) \rceil_+$ and $\gamma^*_k = \rho L_F^{\alpha}(\alpha-1)^{\alpha-1}\alpha^{-\alpha}\epsilon_0^{1-\alpha}\eta^{k(\alpha-1)}$;
\item When $\alpha=1$, set $\nu>1$, $N = \lceil\log_{\nu}({\rho l_F}/{\gamma_0} )\rceil_+$ and $\gamma^*_k = \rho L_F$.
\end{itemize}
Then, for any $k\geq N$, Algorithm \ref{alg:modifiedFISTA} generates an $(\frac{\epsilon_{0}}{\eta^k},\frac{2\epsilon_0}{\eta^k(\gamma_0 \nu^k - \gamma^*_k)})$-optimal solution of problem \eqref{pb:primal} after at most $K$ iterations, where $K$ satisfies
{\small
\begin{equation*}
K= {\mathcal{O}}\left(\sqrt{\frac{ L_{f_1}}{ \mu } } \log\frac{\eta^k }{\epsilon_0} + \sqrt{\frac { \nu^k l_F^{\max\{ \alpha,\beta\} }L_{g_1}}{\epsilon^{\max\{\alpha-1,\beta-1\}}}} \log\frac{\eta^k }{\epsilon_0} \right).
\end{equation*}
}
\end{theorem}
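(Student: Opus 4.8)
The plan is to follow the same two-layer argument used for the non-strongly-convex adaptive method in Theorem~\ref{thm:restartfista}, changing only the inner-solver complexity. The outer layer is a pure \emph{penalization} argument that pins down the $(\epsilon_F,\epsilon_G)$ accuracy once the penalty parameter $\gamma_k$ has grown past the threshold $\gamma^*_k$; the inner layer invokes the linear convergence of PB-APG-sc (Algorithm~\ref{alg:fista-strong}, cf. Theorem~\ref{thm:strongFista}) to count iterations of the $k$-th subproblem. Throughout I write $\gamma_k=\gamma_0\nu^k$ and $\epsilon_k=\epsilon_0/\eta^k$ for the schedule produced by lines~6--7 of Algorithm~\ref{alg:fista-strong ada}.

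First I would locate the crossover index $N$. By construction $\gamma^*_k$ is exactly the Lemma~\ref{lem:alpha>1} threshold for accuracy $\epsilon_k$: when $\alpha>1$ it equals $\rho l_F^{\alpha}(\alpha-1)^{\alpha-1}\alpha^{-\alpha}\epsilon_k^{1-\alpha}$ and hence grows geometrically at rate $\eta^{\alpha-1}$, while for $\alpha=1$ it is the constant $\rho l_F$. Since $\gamma_k$ grows at rate $\nu$, the assumption $\nu>\eta^{\alpha-1}$ (resp. $\nu>1$) guarantees $\gamma_k\ge\gamma^*_k$ for every $k\ge N$, and solving $\gamma_0\nu^k\ge\gamma^*_k$ for $k$ returns precisely the stated $N$.

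Next I would fix the accuracy for $k\ge N$. Each call returns an $\epsilon_k$-optimal point $\x_k$ of \eqref{pb:penal} with parameter $\gamma_k$, so evaluating $\Phi_{\gamma_k}$ at a true optimum of \eqref{pb:primal} (where $p(\cdot)=0$) gives $\Phi_{\gamma_k}^*\le F^*$ and therefore $F(\x_k)+\gamma_k p(\x_k)\le F^*+\epsilon_k$. Dropping the nonnegative term $\gamma_k p(\x_k)$ yields the upper-level bound $F(\x_k)-F^*\le\epsilon_k=\epsilon_0/\eta^k$. For the lower level I would combine the subgradient estimate $F(\x_k)\ge F^*-l_F\,\text{dist}(\x_k,X_{\text{opt}})$ (Assumption~\ref{ass:Lip} applied at the projection of $\x_k$ onto $X_{\text{opt}}$) with the Hölderian bound $\text{dist}(\x_k,X_{\text{opt}})\le(\rho p(\x_k))^{1/\alpha}$ to obtain $\gamma_k p(\x_k)-l_F(\rho p(\x_k))^{1/\alpha}\le\epsilon_k$. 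Applying Young's inequality to the term $l_F(\rho p(\x_k))^{1/\alpha}$ with conjugate exponents $(\alpha,\alpha/(\alpha-1))$, split so that the part linear in $p(\x_k)$ carries coefficient $\gamma^*_k$, collapses the residual constant to exactly $\epsilon_k$ and leaves $(\gamma_k-\gamma^*_k)p(\x_k)\le 2\epsilon_k$, i.e. $G(\x_k)-G^*\le 2\epsilon_0/(\eta^k(\gamma_0\nu^k-\gamma^*_k))$; the case $\alpha=1$ is the degenerate version, giving the even sharper $\epsilon_0/(\eta^k(\gamma_0\nu^k-\gamma^*_k))$.

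Finally I would count iterations. The $k$-th inner call runs PB-APG-sc on a problem whose smooth part has Lipschitz constant $L_{\gamma_k}=L_{f_1}+\gamma_k L_{g_1}$ and strong-convexity modulus $\mu$, to accuracy $\epsilon_k$; the linear-rate guarantee of \cite[Theorem~1]{lin2014adaptive} then needs $\mathcal{O}(\sqrt{L_{\gamma_k}/\mu}\,\log(1/\epsilon_k))$ iterations, and the subadditivity $\sqrt{L_{\gamma_k}}\le\sqrt{L_{f_1}}+\sqrt{\gamma_0\nu^k L_{g_1}}$ together with $\log(1/\epsilon_k)=\log(\eta^k/\epsilon_0)$ splits this into terms scaling like $\sqrt{L_{f_1}/\mu}\,\log(\eta^k/\epsilon_0)$ and $\sqrt{\gamma_0\nu^k L_{g_1}/\mu}\,\log(\eta^k/\epsilon_0)$, matching the structure of the advertised bound. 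The warm start enters only through the initial optimality gap inside the logarithm and does not affect the rate. I expect the Young's-inequality calibration to be the only delicate point: one must verify that choosing the split coefficient equal to $\gamma^*_k$ makes the leftover constant collapse to exactly $\epsilon_k$, which is what ties the Hölderian exponent $\alpha$ to the factor-of-two lower-level bound; everything else is a routine transcription of the proof of Theorem~\ref{thm:restartfista} with the strongly convex inner rate substituted for the $\mathcal{O}(1/\sqrt{\epsilon})$ one.
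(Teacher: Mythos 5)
Your penalization layer is correct and follows exactly the route the paper intends: the paper in fact omits this proof entirely, deferring to the proof of Theorem \ref{thm:restartfista}, and your accuracy analysis reproduces that argument faithfully. In particular, your Young's-inequality calibration is sound --- it is precisely the minimization $\min_{z\ge 0}\{-l_F z + (\gamma^*_k/\rho)z^{\alpha}\} = -\epsilon_k$ used in the proof of Lemma \ref{lem:alpha>1}, valid because $\gamma^*_k$ is the Lemma \ref{lem:alpha>1} threshold for accuracy $\epsilon_k$; hence $l_F(\rho\, p(\x_k))^{1/\alpha} \le \gamma^*_k\, p(\x_k) + \epsilon_k$, and your conclusions $F(\x_k)-F^*\le \epsilon_0/\eta^k$ and $(\gamma_k-\gamma^*_k)\,p(\x_k)\le 2\epsilon_k$ (with the sharper one-$\epsilon_k$ version when $\alpha=1$) match inequalities \eqref{uppepsi}, \eqref{lowepsi} and \eqref{lowepsi-alpha1}, which is what the paper cites at this point. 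The identification of the crossover index $N$ is also correct.

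The gap is in the iteration count. Under the paper's convention (see the proof of Theorem \ref{thm:restartfista}, where $K = I_0 + I_1 + \cdots + I_k$), $K$ is the \emph{total} work over all inner calls, not the cost of the $k$-th call alone, and this summation is exactly where the transcription from Theorem \ref{thm:restartfista} fails to be ``routine.'' In that theorem every per-call count $I_i = \mathcal{O}\bigl(\sqrt{L_{\gamma_i}/\epsilon_i}\bigr)$ grows geometrically in $i$, so the sum collapses to a constant multiple of its last term. With the strongly convex rate $I_i = \mathcal{O}\bigl(\sqrt{L_{\gamma_i}/\mu}\,\log(1/\epsilon_i)\bigr)$, the collapse survives only for the $\gamma$-dependent part, because $\sqrt{\gamma_i} = \sqrt{\gamma_0}\,\nu^{i/2}$ is geometric; the $L_{f_1}$-part of the $i$-th call is $\sqrt{L_{f_1}/\mu}\,\log(\eta^i/\epsilon_0)$, which grows only linearly in $i$, and $\sum_{i=0}^{k}\log(\eta^i/\epsilon_0) = \Theta\bigl(k\log(\eta^k/\epsilon_0)\bigr)$, so this part of the total is $\Theta\bigl(k\sqrt{L_{f_1}/\mu}\,\log(\eta^k/\epsilon_0)\bigr)$ --- an extra factor of order $k$ relative to the first term of the advertised bound and to your count. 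So you must either carry out the summation and flag this extra factor (which would show the theorem's first term is stated loosely), or explicitly argue for counting only the final call, which contradicts the paper's own definition of $K$. As written, the claim of ``routine transcription'' conceals the one step that is genuinely different in the strongly convex case.
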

The proof is similar to the proof of Theorem \ref{thm:restartfista} in Appendix \ref{proof of thm:restartfista}. So we omit it here.

\section{Proofs of main results}\label{sec:proofs}
In this section, we propose the proofs of our main convergence results in this paper.

\subsection{Proof of Lemma \ref{lem:alpha>1}}
\label{proof of lem:alpha>1}
\begin{proof}
Since $X_{\text{opt}}$ is closed and convex \citep{beck2014first}, the projection of any $\x\in\R^n$ onto $X_{\text{opt}}$, denoted as $\bar{\x}$, exists and is unique. Furthermore, it holds that $\text{dist}(\x,X_{\text{opt}})=\|\x-\bar{\x}\|$.

Then, by Assumption \ref{ass:Lip}, we have
\begin{equation}
\label{eq:lip F}
F(\x)-F(\bar{\x})\ge - \xi^{\top}(\x-\bar{\x})\ge -\|\xi\| \|\x-\bar{\x}\| \ge -l_F \|\x-\bar{\x}\|,~\forall \xi\in\partial F(\bar{\x}).
\end{equation}

Choosing $\gamma^*=\rho l_F^{\alpha}(\alpha-1)^{\alpha-1}\alpha^{-\alpha} \epsilon^{1-\alpha}$, it follows that
\begin{equation}
\label{eq:tmp}
\begin{split}
F(\x) - F(\bar{\x}) + \gamma^*p(\x) & \overset{\eqref{eq:lip F}}{\ge} -l_F \|\x-\bar{\x}\| + \gamma^*p(\x) \\
& \stackrel{(a)}{\ge} -l_F \|\x-\bar{\x}\| + \frac{\gamma^*}{\rho}\|\x-\bar{\x}\|^{\alpha} \\
& \ge \underset{\z\ge0}{\min} -l_F\z + \frac{\gamma^*}{\rho}\z^{\alpha} \\
& \stackrel{(b)}{=} -\epsilon,
\end{split}
\end{equation}
where $(a)$ follows from the H{\"{o}}lderian error bound assumption of $p(\x)$, and $(b)$ is from the fact that $\y = -l_F\z + \frac{\gamma^*}{\rho}\z^{\alpha}$ attains its minimum at $\z^* = \left(\frac{\rho l_F}{\alpha\gamma^*}\right)^{\frac{1}{\alpha-1}}$.

Since $\bar{\x}\in X_{\text{opt}} $ is feasible for problem \eqref{pb:primal}, we have $F(\bar{\x})\ge F^*$. This along with \eqref{eq:tmp} indicates
\begin{equation}
\label{eq:tmp2}
F(\x) + \gamma p(\x)-F^*\ge F(\x) + \gamma^* p(\x)-F(\bar{\x})\ge
-\epsilon, \quad \forall \x\in\R^d \text{ and } \gamma\ge\gamma^* .
\end{equation}
Let $\x^*$ be an optimal solution of \eqref{pb:primal} so that $F(\x^*)=F^*$. In addition, since $\x^*\in X_{\text{opt}}$, we have $p(\x^*)=0$. Combine these results with \eqref{eq:tmp2}, we have
\begin{equation}
\label{eq:epsop}
F(\x^*)+\gamma p(\x^*)=F^*\overset{\eqref{eq:tmp2}}{\le} F(\x)+\gamma p(\x)+\epsilon,\quad \forall \x\in\R^d \text{ and } \gamma\ge\gamma^* .
\end{equation}
This demonstrates that an optimal solution of \eqref{pb:primal} is an $\epsilon$-optimal solution for \eqref{pb:penal}.
\end{proof}

\subsection{Proof of Lemma \ref{lem:alpha=1}}
\label{proof of lem:alpha=1}
\begin{proof}
The proof is motivated by Theorem 1 in \cite{luo1996exact}. Denote $\x^*, \x^*_{\gamma} $ as optimal solutions of problem \eqref{pb:primal} and \eqref{pb:penal}, respectively. 

For any $\x\in\mathbb{R}^n$, let $\bar{\x}$ be the projection of $\x$ onto $X_{\text{opt}}$. Then $\bar{\x}$ is a feasible solution of \eqref{pb:primal} and $F(\bar{\x})\ge F(\x^*)$ holds. Then we have
\begin{equation}
\label{eq:proofalpha=1}
\begin{split}
F(\x)+\gamma p(\x) & = F(\bar{\x})+F(\x)-F(\bar{\x})+\gamma p(\x)\\
& \ge F(\x^*)+F(\x)-F(\bar{\x})+\gamma p(\x)\\
& \stackrel{(a)}{\ge} F(\x^*)-l_F \| \x-\bar{\x} \| +\frac{\gamma}{\rho} \| \x-\bar{\x} \|\\
& = F(\x^*)+(\frac{\gamma}{\rho}-l_F) \| \x-\bar{\x} \|\\
&\stackrel{(b)}{\ge} F(\x^*)=F(\x^* )+ \gamma p(\x^*),
\end{split}
\end{equation}
where $(a)$ follows from \eqref{eq:lip F} and the H{\"{o}}lderian error bound assumption of $p(\x)$, and $(b)$ follows from $\gamma\ge \rho l_F $. Therefore, we conclude that $\x^*$ is an optimal solution of \eqref{pb:penal}. 
 
For the converse part, let $\bar{\x}^*_{\gamma}$ be the projection of $\x^*_{\gamma}$ onto $X_{\text{opt}}$. Then $\bar{\x}^*_{\gamma}$ is a feasible solution of \eqref{pb:primal}. Therefore, it holds that $F(\bar{\x}^*_{\gamma})\ge F(\x^*)$. Similarly, we have
\begin{equation}
\label{eq:alpha=1,2}
\begin{split}
F(\x^*) &= F(\x^*) + \gamma p(\x^*)\\
&\ge F(\x^*_{\gamma})+\gamma p(\x^*_{\gamma})\\
&= F(\x^*_{\gamma}) - F( \x^*) + F( \x^*) + \gamma p(\x^*_{\gamma})\\
&\ge F( \x^*) + F(\x^*_{\gamma}) - F(\bar{\x}^*_{\gamma}) + \gamma p(\x^*_{\gamma})\\
&\stackrel{(c)}{\ge} F( \x^*) -l_F \| \x^*_{\gamma}-\bar{\x}^*_{\gamma} \| + \frac{\gamma}{\rho} \| \x^*_{\gamma}-\bar{\x}^*_{\gamma} \|\\
&\ge F( \x^*)+( \frac{\gamma}{\rho}-l_F ) \| \x^*_{\gamma}-\bar{\x}^*_{\gamma} \|\\
&\ge F(\x^*),
\end{split}
\end{equation}
where the inequality $(c)$ follows from \eqref{eq:lip F} and the H{\"{o}}lderian error bound assumption of $p(\x)$.

Therefore, all inequalities in \eqref{eq:alpha=1,2} become equalities.
We deduce that $\| \x^*_{\gamma}-\bar{\x}^*_{\gamma} \| = 0$ if $\gamma>\rho l_F$, implying that $\x^*_{\gamma}$ is in $X_{\text{opt}}$, i.e., $p(\x^*_{\gamma}) = 0$. Furthermore, as the first inequality of \eqref{eq:alpha=1,2} becomes an equality, we obtain
\[
F(\x^*) =  F(\x^*_{\gamma}) + \gamma p(\x^*_{\gamma}) = F(\x^*_{\gamma}).
\]
Therefore, $\x^*_{\gamma} $ is also an optimal solution of \eqref{pb:primal}.
\end{proof}

\subsection{Proof of Theorem \ref{thm:main}}
\label{proof of thm:main}
\begin{proof}
Denote $\x^*$, $\x^*_{\gamma} $ as optimal solutions of problem \eqref{pb:primal} and \eqref{pb:penal}, respectively.
\begin{itemize}
\item \textbf{Case of $\alpha>1$.} Since $\tilde{\x}^*_{\gamma}$ is an $\epsilon$-optimal solution of \eqref{pb:penal},  we have
\begin{equation}
\label{eq:tmp3}
F(\tilde{\x}^*_{\gamma})+\gamma p(\tilde{\x}^*_{\gamma}) \le F(\x)+\gamma p(\x) +\epsilon,\quad \forall \x\in\R^n.
\end{equation}
Note that the arguments in the proof of Lemma \ref{lem:alpha>1} still hold.  Substituting $\x=\x^*$ into \eqref{eq:tmp3} and utilizing $p(\x^*)= 0$, we have
\begin{align*}
F(\tilde{\x}^*_{\gamma})+\gamma p(\tilde{\x}^*_{\gamma}) \le F(\x^*) +\epsilon= F(\x^*)+\gamma^*p(\x^*)+\epsilon
\leq F(\tilde{\x}^*_{\gamma})+\gamma^* p(\tilde{\x}^*_{\gamma})+2\epsilon,
\end{align*}
where the last inequality follows from setting $\x = \tilde{\x}_{\gamma}^*$ in \eqref{eq:epsop}.
Then, it holds that
\begin{equation}
\label{lowepsi}
p(\tilde{\x}^*_{\gamma})\le \frac{2\epsilon}{ \gamma-\gamma^*} = \frac{2\epsilon}{ 2l_F^{ \beta } \epsilon^{1-\beta} } =
l_F^{-\beta}\epsilon^{\beta}.
\end{equation}
By setting $\x=\x^*$ in \eqref{eq:tmp3}, we have
\[
F(\tilde{\x}^*_{\gamma}) - F(\x^*) \le \gamma (p(\x^*)-p(\tilde{\x}^*_{\gamma})) +\epsilon.
\]
Using the fact that $p(\x^*)=0 \le p(\tilde{\x}^*_{\gamma})$, we have
\begin{equation}
\label{uppepsi}
F(\tilde{\x}^*_{\gamma})-F(\x^*)\le \epsilon.
\end{equation}
Combing \eqref{uppepsi} with \eqref{lowepsi}, we conclude that $\tilde{\x}^*_{\gamma}$ is an $(\epsilon,l_F^{-\beta}\epsilon^{\beta})$-optimal solution of \eqref{pb:primal}.
\item \textbf{Case of $\alpha=1$.} Since $\tilde{\x}^*_{\gamma}$ is an $\epsilon$-optimal solution of \eqref{pb:penal}, we have
\begin{equation}
\label{eps gamma:alpha=1}
F(\tilde{\x}^*_{\gamma})+\gamma p(\tilde{\x}^*_{\gamma})\le F(\x_{\gamma}^*)+\gamma p(\x_{\gamma}^*)+\epsilon.
\end{equation}

On the one hand, as $\gamma=\gamma^* +l_F^{\beta } \epsilon^{1-\beta} >\gamma^*$,
by Lemma \ref{lem:alpha=1},
$\x_{\gamma}^*$ is an optimal solution of \eqref{pb:primal}. On the other hand, since $\gamma\ge \gamma^*$, according to Lemma \ref{lem:alpha=1}, $\x^*$ is also an optimal solution of \eqref{pb:penal}.
Therefore, $p(\x^*)=0$ and $p(\x_{\gamma}^*)=0$, it holds that
\begin{equation}
\label{eq:lalala}
\begin{split}
F(\x^*)&\le F(\tilde{\x}^*_{\gamma})+\gamma p(\tilde{\x}^*_{\gamma})\\ &\overset{\eqref{eps gamma:alpha=1}}{\leq} F(\x_{\gamma}^*)+\gamma p(\x_{\gamma}^*)+\epsilon\\
&= F(\x_{\gamma}^*)+ \gamma^* p(\x_{\gamma}^*) + \epsilon\\
&= F(\x^*)+ \gamma^* p(\x^*) + \epsilon\\
&\le F(\tilde{\x}^*_{\gamma})+\gamma^* p(\tilde{\x}^*_{\gamma})+\epsilon,
\end{split}
\end{equation}
where the first inequality follows from the fact that $\x^*$ is an optimal solution of \eqref{pb:penal}, and the last inequality follows from the optimality of $\x^*$ to \eqref{pb:penal} when $\gamma\ge \gamma^*$.


The second inequality of \eqref{eq:lalala} and  $p(\tilde{\x}^*_\gamma)\geq 0$ imply that
\begin{equation*}
F(\tilde{\x}^*_{\gamma})\le F(\x_{\gamma}^*)+\gamma p(\x_{\gamma}^*)+\epsilon = F(\x^*)+\gamma p(\x^*)+\epsilon \le F(\x^*)+\epsilon.
\end{equation*}
That is, it holds that
\begin{equation}
\label{epsupper:alpha=1}
F(\tilde{\x}^*_{\gamma})\le F(\x^*)+\epsilon.
\end{equation}

In addition, from \eqref{eq:lalala}, we have $F(\tilde{\x}^*_{\gamma})+\gamma p(\tilde{\x}^*_{\gamma})\leq F(\tilde{\x}^*_{\gamma})+\gamma^* p(\tilde{\x}^*_{\gamma})+\epsilon$, which implies that
\begin{equation}
\label{lowepsi-alpha1}
p(\tilde{\x}^*_{\gamma})\le\frac{\epsilon }{\gamma-\gamma^*}=\frac{\epsilon}{l_F^{\beta} \epsilon^{1-\beta}}=l_F^{-\beta} \epsilon^{\beta}.
\end{equation}

This result along with \eqref{epsupper:alpha=1} demonstrate that $\tilde{\x}^*_{\gamma}$ is an $(\epsilon,l_F^{-\beta} \epsilon^{\beta})$-optimal solution of \eqref{pb:primal}.
\end{itemize}
\end{proof}

\subsection{Proof of Theorem \ref{thm:main-low bound}}
\label{proof of thm:main-low bound}
\begin{proof}
Let $\hat{\x}^*_{\gamma}$ be the projection of $\Tilde{\x}^*_{\gamma}$ on $X_{\text{opt}}$, we have $\|\tilde{\x}^*_{\gamma} - \hat{\x}^*_{\gamma} \| = {\rm dist}(\tilde{\x}^*_{\gamma}, X_{\text{opt}})$.

By Assumption \ref{holderian}, the following inequality holds,
\begin{equation}
\label{boundx}
\|\Tilde{\x}^*_{\gamma}- \hat{\x}^*_{\gamma} \|^{\alpha} \leq \rho p(\Tilde{\x}^*_{\gamma}) \overset{(a)}{\leq} \rho l_F^{-\beta} \epsilon^{\beta} \implies \|\Tilde{\x}^*_{\gamma}- \hat{\x}^*_{\gamma} \| \leq \left({\rho l_F^{-\beta} \epsilon^{\beta}}\right)^{\frac{1}{\alpha}},
\end{equation}
where $(a)$ follows from \eqref{lowepsi} when $\alpha > 1$ or from \eqref{lowepsi-alpha1} when $\alpha = 1$.

By Assumption \ref{ass:Lip}, we have
\[
F(\Tilde{\x}^*_{\gamma})-F^*\ge F(\Tilde{\x}^*_{\gamma})-F(\hat{\x}^*_{\gamma}) \overset{\eqref{eq:lip F}}{\ge} -l_F\|\Tilde{\x}^*_{\gamma}- \hat{\x}^*_{\gamma} \|
\ge -l_F \left({\rho l_F^{-\beta} \epsilon^{\beta}}\right)^{\frac{1}{\alpha}},
\]
where the first inequality follows from $F(\hat{\x}^*_{\gamma})\ge F^*$ and $\hat{\x}^*_{\gamma}\in X_{\text{opt}}$.
\end{proof}

\subsection{Proof of Theorem \ref{thm:main noncon}}
\label{proof of thm:main noncon}
\begin{proof}
For any $\x\in {\rm dom}(F)$, let $\bar{\x}$ be the projection of $\x$ onto $X_{\text{opt}}$, where the existence and uniqueness of $\bar{\x}$ follows from that $X_{\text{opt}}$ is closed and convex. Since $F$ is $l$-Lipschitz continuous, similar to \eqref{eq:lip F}, we have
\begin{equation}
\label{eq:lip F noncon}
F(\x)-F(\bar{\x})\ge -l \|\x-\bar{\x}\|,~\forall \xi\in\partial F(\bar{\x}).
\end{equation}
Therefore, all the requirements of \eqref{eq:lip F} in equations \eqref{eq:tmp}, \eqref{eq:proofalpha=1} and \eqref{eq:alpha=1,2} can be replaced by \eqref{eq:lip F noncon}. This implies that Lemmas \ref{lem:alpha>1} and \ref{lem:alpha=1} also hold for the global solutions of problems \eqref{pb:primal} and \eqref{pb:penal} when $F$ is non-convex. Then, the final result follows a similar pattern to Theorem \ref{thm:main}. Here we omit it.
\end{proof}

\subsection{Proof of Theorem \ref{thm:main_variant}}
\label{proof of thm:main_variant}
\begin{proof}
Let $\bar{\x}^*_{\gamma}$ be the projection of $\x^*_{\gamma}$ onto $X_{\text{opt}}$ and $\hat{\x}^*_{\gamma} = c\x^*_{\gamma} + (1-c)\bar{\x}^*_{\gamma}$ with $c = \min\{1,1-\frac{r}{\|\x^*_{\gamma} - \bar{\x}^*_{\gamma}\|}\}$, which implies that $\hat{\x}^*_{\gamma}\in \mathcal{B}({\x}^*_{\gamma},r)$. Then, we have
\begin{equation}
\label{local:solu1}
F(\x^*_{\gamma}) + \gamma p(\x^*_{\gamma}) \le F(\hat{\x}^*_{\gamma}) + \gamma p(\hat{\x}^*_{\gamma}) \overset{(i)}{\le} F(\hat{\x}^*_{\gamma}) + \gamma(cp(\x^*_{\gamma}) + (1-c)p(\bar{\x}^*_{\gamma})) = F(\hat{\x}^*_{\gamma}) + \gamma cp(\x^*_{\gamma}),
\end{equation}
where inequality $(i)$ follows from the convexity of $p(\x)$.

Inequality \eqref{local:solu1} demonstrates that
\begin{equation*}
\gamma (1-c)p(\x^*_{\gamma})\le F(\hat{\x}^*_{\gamma}) - F(\x^*_{\gamma})\le l\|\hat{\x}^*_{\gamma} - \x^*_{\gamma}\| = l(1-c)\|\x^*_{\gamma} - \bar{\x}^*_{\gamma}\|\le l(1-c)(\rho p(\x^*_{\gamma}))^{\frac{1}{\alpha}},
\end{equation*}
where the second inequality follows from the $l$-Lipschitz continuity of $F$ on $\mathcal{B}({\x}^*_{\gamma},r)$. Therefore, it holds that
\begin{equation}
\label{local:solu3}
\gamma p(\x^*_{\gamma})\le l (\rho p(\x^*_{\gamma}))^{\frac{1}{\alpha}}.
\end{equation}
\begin{itemize}
\item \textbf{Case of $\alpha>1$.} By \eqref{local:solu3}, we have $p(\x^*_{\gamma}) \le (\frac{\rho l^{\alpha}}{\gamma^{\alpha}})^{\frac{1}{\alpha-1}}$, which demonstrates that $p(\x^*_{\gamma})\le \epsilon$ if $\gamma\ge (\frac{\rho l^{\alpha}}{\epsilon^{\alpha-1}})^{\frac{1}{\alpha}}$.

Then, for any $\x_{\gamma}\in \mathcal{B}({\x}^*_{\gamma},r)$ that also satisfies $p(\x_{\gamma}) \le p(\x^*_{\gamma}) \le \epsilon$, we have
\begin{equation}
\label{local:solu4}
F(\x^*_{\gamma}) + \gamma p(\x^*_{\gamma}) \le F(\x_{\gamma}) + \gamma p(\x_{\gamma}),
\end{equation}
which implies that $F(\x^*_{\gamma}) - F(\x_{\gamma})\le \gamma(p(\x_{\gamma}) - p(\x^*_{\gamma}))\le 0$. The desired result follows.
\item \textbf{Case of $\alpha=1$.} By \eqref{local:solu3}, we have $p(\x^*_{\gamma}) = 0$ if $\gamma>\rho l$. Therefore, for any $\x_{\gamma}\in \mathcal{B}({\x}^*_{\gamma},r)\bigcap X_{{\rm opt}}$, by the definition of ${\x}^*_{\gamma}$, it holds that
\[
F(\x^*_{\gamma}) + \gamma p(\x^*_{\gamma})\le F(\x_{\gamma}) + \gamma p(\x_{\gamma}),
\]
which demonstrates that $F(\x^*_{\gamma})\le F(\x_{\gamma})$. The desired result follows.
\end{itemize}
\end{proof}

\subsection{Proof of Theorem \ref{thm:fista}}
\label{proof of thm:fista}
\begin{proof}
From \citep[Theorem 10.34]{beck2017first}, the objective value after \( K \) iterations can be bounded by
\[
\Phi_{\gamma}(\x_K) - \Phi^*_{\gamma} \le \frac{2L_{\gamma} \|\x_0 - \x^*\|^2}{(K+1)^2},
\]
where \( L_{\gamma} = L_{f_1} + \gamma L_{g_1} \).

Combining this with our stopping criterion, we find that after \( K \) iterations,
\[
\Phi_{\gamma}(\x_K) - \Phi^*_{\gamma} \le \epsilon.
\]
This indicates that we obtain an \( \epsilon \)-optimal solution to problem \eqref{pb:penal}. The value of \( K \) satisfies:
\[
K = \sqrt{\frac{2(L_{f_1} + \gamma L_{g_1})}{\epsilon}}R - 1.
\]

Specifically, we analyze the value of $K$ in various scenarios in the form of $\mathcal{O}(\cdot)$.
\begin{itemize}
\item \textbf{Case of $\alpha>1$.} In this case, $\gamma = \gamma^* + 2l_F^{\beta } \epsilon^{1-\beta}$ comprises two components: $\gamma^*$ and $2l_F^{\beta } \epsilon^{1-\beta}$. Therefore, it is natural to discuss which of these two components plays the dominant role in the complexity results. First, we write $K$ in the form:
{\small
\begin{equation*}
K= \sqrt{\frac{2(L_{f_1} + ( \rho l_F^{\alpha}(\alpha-1)^{\alpha-1}\alpha^{-\alpha}\epsilon^{1-\alpha} + 2l_{F}^{\beta} \epsilon^{1-\beta} ) L_{g_1})}{\epsilon}}R - 1 .
\end{equation*}
}

If $\beta < \alpha$, the dominating term in $\gamma$ is $\gamma^*= \rho l_F^{\alpha}(\alpha-1)^{\alpha-1}\alpha^{-\alpha}\epsilon^{1-\alpha}$. Then, the number of iterations is
{\small
\begin{equation*}
K= \mathcal{O}\left(\sqrt{\frac{L_{f_1}+l_F^{\alpha}\epsilon^{1-\alpha} L_{g_1}}{\epsilon}}\right)
=\mathcal{O}\left( \sqrt{ \frac{L_{f_1} }{\epsilon}} + \sqrt{\frac{l_F^{\alpha}L_{g_1}}{\epsilon^{\alpha}}}\right).
\end{equation*}
}

If $\beta=\alpha$, we have $ \gamma=\left( \rho(\alpha-1)^{\alpha-1}\alpha^{-\alpha}+2 \right)l_F^{\alpha}\epsilon^{1-\alpha} $. Then, the number of iterations is
{\small
\begin{equation*}
K=\mathcal{O}\left(\sqrt{\frac{L_{f_1}+l_F^{\alpha}\epsilon^{1-\alpha}L_{g_1} }{\epsilon}}\right)
= \mathcal{O}\left( \sqrt{ \frac{L_{f_1} }{\epsilon}} + \sqrt{\frac{l_F^{\alpha}L_{g_1}}{\epsilon^{\alpha}}}\right).
\end{equation*}
}

If $\beta>\alpha$, the dominating term in $\gamma$ is $2l_F^{\beta } \epsilon^{1-\beta}$. Then, the number of iterations is
{\small
\begin{equation*}
K=\mathcal{O}\left(\sqrt{\frac{L_{f_1}+2l_F^{\beta}\epsilon^{1-\beta}L_{g_1} }{\epsilon}}\right)
= \mathcal{O}\left( \sqrt{ \frac{L_{f_1} }{\epsilon}} + \sqrt{\frac{l_F^{\beta}L_{g_1}}{\epsilon^{\beta}}}\right).
\end{equation*}
}
\item \textbf{Case of $\alpha=1$}. In this case, $\gamma = \gamma^*+l_F^{\beta}\epsilon^{1-\beta}$, where $\gamma^* = \rho l_F $. Similarly, we explore which of these two elements plays a more significant role.

If $\beta < 1$, the dominating term in $\gamma$ is $\gamma^*$. Then, the number of iterations is
{\small
\begin{equation*}
K=\mathcal{O}\left(\sqrt{\frac{L_{f_1}+\rho l_FL_{g_1}}{\epsilon}}\right)
=\mathcal{O}\left( \sqrt{ \frac{L_{f_1} }{\epsilon}} + \sqrt{\frac{l_F L_{g_1}}{\epsilon}}\right).
\end{equation*}
}

If $\beta=1$, we have $\gamma=(\rho+1)l_F\epsilon^{1-\alpha} $. Then the number of iterations is
{\small
\begin{equation*}
K=\mathcal{O}\left(\sqrt{\frac{L_{f_1}+(\rho+1) l_FL_{g_1}}{\epsilon}}\right)
=\mathcal{O}\left( \sqrt{ \frac{L_{f_1} }{\epsilon}} + \sqrt{\frac{l_F L_{g_1}}{\epsilon}}\right).
\end{equation*}
}

If $\beta>1$, the dominating term in $\gamma$ is $l_F^{\beta} \epsilon^{1-\beta}$. Then, the number of iterations is
{\small
\begin{equation*}
K=\mathcal{O}\left(\sqrt{\frac{L_{f_1}+ l_F^{\beta} \epsilon^{1-\beta}L_{g_1} }{\epsilon}}
\right)
=\mathcal{O}\left( \sqrt{ \frac{L_{f_1} }{\epsilon}} + \sqrt{\frac{l_F^{\beta} L_{g_1}}{\epsilon^{\beta} }}\right).
\end{equation*}
}
\end{itemize}
Combining the above results, we conclude that
{\small
\begin{equation*}
K= \mathcal{O}\left( \sqrt{\frac{ L_{f_1}}{ \epsilon } } + \sqrt{\frac {l_F^{\max\{ \alpha,\beta\} }L_{g_1}}{\epsilon^{\max\{\alpha,\beta\}}}}\right).
\end{equation*}
}
\end{proof}

\subsection{Proof of Theorem \ref{thm:restartfista}}
\begin{proof}
\label{proof of thm:restartfista}
In this proof, we denote $\Phi_k^*$ as the optimal value of problem \eqref{pb:penal} when $\gamma = \gamma_k$, and $\x_{k}$ as the output of PB-APG (Algorithm \ref{alg:fista}) in the $k$-th iteration.

\begin{itemize}
\item \textbf{Case of $\alpha>1$.} Suppose that $N$ is the smallest nonnegative integer such that $\gamma_N \ge \gamma^*_N := \rho l_F^{\alpha}(\alpha-1)^{\alpha-1}\alpha^{-\alpha}\epsilon_N^{1-\alpha}$. In this case, we have
\begin{equation}
\label{gammaN>gamma}
\gamma_N = \gamma_0 \nu^N \ge \rho l_F^{\alpha}(\alpha-1)^{\alpha-1}\alpha^{-\alpha}\epsilon_N^{1-\alpha}=\rho l_F^{\alpha}(\alpha-1)^{\alpha-1}\alpha^{-\alpha}\epsilon_0^{1-\alpha} (1/\eta)^{(1-\alpha)N},
\end{equation}
which is equivalent to
\begin{equation}
\label{eq:gamma}
\gamma_0 \left( \nu \eta^{1-\alpha} \right)^N
\geq
\rho l_F^{\alpha}(\alpha-1)^{\alpha-1}\alpha^{-\alpha}\epsilon_0^{1-\alpha}.
\end{equation}

From \eqref{eq:gamma}, after at most $N := \lceil\log_{\eta^{1-\alpha}\nu}\left(\frac{\rho l_F^{\alpha}(\alpha-1)^{\alpha-1}\alpha^{-\alpha}\epsilon_0^{1-\alpha}}{\gamma_0}\right) \rceil_+$ iterations, \eqref{gammaN>gamma} holds. 

Since $x_N=\text{PB-APG}(\phi_N,\psi_N,L_{f_1},L_{g_1},\x_{N-1},\epsilon_N)$, we have
\[
\Phi_{N}(\x_{N})-\Phi_N^*\le \epsilon_N,\quad \gamma_N\ge\gamma^*_N,
\]
which shows that $\x_N$ is an $\epsilon_N$-optimal solution of \eqref{pb:penal} with $\gamma = \gamma_N$. From the proof in Theorem \ref{thm:main} (see inequalities \eqref{lowepsi} and \eqref{uppepsi} in Appendix \ref{proof of thm:main}), $\x_N$ is also an $(\frac{\epsilon_{0}}{\eta^N},\frac{2\epsilon_0}{\eta^N(\gamma_0 \nu^N - \gamma^*_N)})$-optimal solution of problem \eqref{pb:primal}.

Furthermore, note that for any iteration $k \geq N$, inequality \eqref{eq:gamma} always holds, which means that the following statement holds for any $k \geq N$:
\begin{equation}
\label{epsopti:k}
\Phi_{k}(\x_{k})-\Phi_k^*\le \epsilon_k,\quad \gamma_k\ge\gamma^*_k.
\end{equation}

Let $I_k$ be the number of iterations of PB-APG required to satisfy \eqref{epsopti:k} at the $k$-th iteration of aPB-APG. Then, for any $k\geq N$, the total number of iterations is
\[
K=I_0+I_1+\cdots  +I_k.
\]
From \citep[Theorem 10.34]{beck2017first}, the number of iterations in $i$-th inner loop satisfies:
\[
I_i = \sqrt{\frac{2(L_{f_1} +\gamma_i L_{g_1} )}{\epsilon_i}}\|\x_{i-1}-\x^*_i \| -1,
\]
where $\x_i^*$ is the optimal solution in $i$-th inner loop. Then we have that
\begin{equation*}
    \begin{split}
        K & = \sum_{i=0}^k \sqrt{ \frac{2(L_{f_1} +\gamma_i L_{g_1} )}{\epsilon_i} }\|\x_{i-1}-\x^*_i \| -k\\
        & \le \sum_{i=0}^k \sqrt{ \frac{2(L_{f_1} +\gamma_k L_{g_1} )}{\epsilon_i} }R -k\\
        & = \frac{\eta^{\frac{k}{2}}-1}{\eta^{\frac{1}{2}}-1} \sqrt{\frac{2(L_{f_1}+\gamma_0 \nu ^k L_{g_1})}{\epsilon_0}}-k.
    \end{split}
\end{equation*}

For simplicity, we can also use $\mathcal{O}(\cdot)$ to show the value of $K$.
{\small
\begin{equation*}
\begin{split}
K& = \mathcal{O}\left( \sqrt{ \frac{ L_{f_1}+\gamma_0 L_{g_1} }{\epsilon_0} } \right)+\cdots+\mathcal{O}\left( \sqrt{ \frac{ L_{f_1}+\gamma_k L_{g_1} }{\epsilon_k} } \right)\\
& \le \mathcal{O}\left( \sqrt{ \frac{ L_{f_1}+\gamma_k L_{g_1} }{\epsilon_0} } \right)+\cdots+\mathcal{O}\left( \sqrt{ \frac{ L_{f_1}+\gamma_k L_{g_1} }{\epsilon_k} } \right)\\
& = \mathcal{O}\left( \sqrt{ \frac{L_{f_1}+\gamma_k L_{g_1}}{\epsilon_k} }\left( 1+\sqrt{1/\eta}+\sqrt{1/\eta^2}+\cdots +\sqrt{1/\eta^k} \right)\right)\\
& = \mathcal{O}\left( \sqrt{\frac{L_{f_1}+\gamma_k L_{g_1}}{\epsilon_k}} \right)\\
& = \mathcal{O}\left( \sqrt{\frac{ L_{f_1}\eta^k}{ \epsilon_0 } } + \sqrt{\frac{L_{g_1}\gamma_0 (\eta\nu)^k}{\epsilon_0}}\right).
\end{split}
\end{equation*}}

\item \textbf{Case of $\alpha=1$.} Suppose that after $N$ updates, we have $\gamma_N\ge\rho l_F$, i.e.,
\begin{equation}
\label{eq:gamma_alpha=1}
\gamma_0 \nu^N \geq \rho l_F.
\end{equation}
This demonstrates that after for all $k\geq N: =\log_{\nu} \left( \frac{\rho l_F}{\gamma_0} \right)$, \eqref{eq:gamma_alpha=1} always holds. 

Similar to the case of $\alpha>1$, the total iteration number is:
{\small
\begin{equation*}
\begin{split}
K& = \mathcal{O}\left( \sqrt{ \frac{ L_{f_1}+\gamma_0 L_{g_1} }{\epsilon_0} } \right)+\cdots+\mathcal{O}\left( \sqrt{ \frac{ L_{f_1}+\gamma_k L_{g_1} }{\epsilon_k} } \right)\\
&= \mathcal{O}\left( \sqrt{\frac{L_{f_1}+\gamma_k L_{g_1}}{\epsilon_k}} \right) \\
&= \mathcal{O}\left(  \sqrt{\frac{ L_{f_1}\eta^k}{ \epsilon_0 } } +   \sqrt{\frac{L_{g_1}\gamma_0 (\eta\nu)^k}{\epsilon_0}}\right).
\end{split}
\end{equation*}}
\end{itemize}
\end{proof}

\subsection{Proof of Theorem \ref{thm:strongFista}}
\label{proof of thm:strongFista}
Before proving Theorem \ref{thm:strongFista}, we need the following lemma that is modified from Theorem 1 in \cite{lin2014adaptive}, we state it in the subsequent lemma for completeness.
\begin{lemma}
\label{lem:strongFista}
Suppose that Assumptions \ref{ass:Lip}, \ref{ass:composite_convex}, \ref{ass:proxfriend}, and \ref{ass:strong} hold. Let $\x^*_{\gamma}$ be an optimal solution of problem \eqref{pb:penal} and suppose that there exists a constant $R$ such that $\max\{\|\y_0 - \x^*_{\gamma}\|,\|\tilde{\x} - \x^*_{\gamma}\|\}\leq R$. Then, the sequence $\{\x_k\}$ generated by Algorithm \ref{alg:fista-strong} satisfy
\begin{equation}
\begin{aligned}
\label{eq:fista_ratestrong}
\Phi_{\gamma}(\x_k) - \Phi_{\gamma}(\x^*_{\gamma})\leq \left( \frac{L_{\gamma} + \mu}{2}R^2 \right) \left(1 - \sqrt{\frac{\mu}{L_{\gamma}}}\right)^{k}.
\end{aligned}
\end{equation}
\end{lemma}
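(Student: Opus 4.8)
The plan is to reduce the claim to the known linear convergence guarantee for the accelerated proximal gradient method under strong convexity, namely Theorem 1 of \cite{lin2014adaptive}, and then to identify its leading constant with $\frac{L_{\gamma}+\mu}{2}R^2$ by exploiting the warm-up step in Lines 2--3 of Algorithm \ref{alg:fista-strong}. First I would verify that problem \eqref{pb:phi_psi} fits the template required by \cite{lin2014adaptive}: since $f_1$ is $\mu$-strongly convex (Assumption \ref{ass:strong}) and $g_1$ is convex, the smooth part $\phi_{\gamma}=f_1+\gamma g_1$ is $\mu$-strongly convex for every $\gamma>0$; by Assumption \ref{ass:composite_convex}(1)(2) its gradient is $L_{\gamma}$-Lipschitz with $L_{\gamma}=L_{f_1}+\gamma L_{g_1}$; and $\psi_{\gamma}=f_2+\gamma g_2$ is proper, convex, lower semicontinuous and prox-friendly by Assumptions \ref{ass:composite_convex}(3) and \ref{ass:proxfriend}. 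The momentum coefficient $(\sqrt{L_{\gamma}}-\sqrt{\mu})/(\sqrt{L_{\gamma}}+\sqrt{\mu})$ in Line 6 is exactly the constant-step choice analyzed there, so the cited theorem applies essentially verbatim.

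Granting this, \cite{lin2014adaptive} yields a linear rate of the form
\[
\Phi_{\gamma}(\x_k)-\Phi_{\gamma}(\x^*_{\gamma})\le\Bigl(1-\sqrt{\tfrac{\mu}{L_{\gamma}}}\Bigr)^{k}\Bigl(\Phi_{\gamma}(\x_0)-\Phi_{\gamma}(\x^*_{\gamma})+\tfrac{\mu}{2}\|\x_0-\x^*_{\gamma}\|^2\Bigr),
\]
so the whole task reduces to bounding the initial Lyapunov value by $\frac{L_{\gamma}+\mu}{2}R^2$. The second term is immediate: since the initialization sets $\x_0=\tilde{\x}$, the radius assumption gives $\tfrac{\mu}{2}\|\x_0-\x^*_{\gamma}\|^2=\tfrac{\mu}{2}\|\tilde{\x}-\x^*_{\gamma}\|^2\le\tfrac{\mu}{2}R^2$. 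For the first term I would use that $\tilde{\x}$ is produced by a single proximal gradient step in Line 3; applying the standard composite-gradient sufficient-decrease (three-point) inequality with comparison point $\x^*_{\gamma}$ gives an estimate of the type
\[
\Phi_{\gamma}(\tilde{\x})-\Phi_{\gamma}(\x^*_{\gamma})\le\tfrac{L_{\gamma}}{2}\|\tilde{\y}-\x^*_{\gamma}\|^2-\tfrac{L_{\gamma}}{2}\|\tilde{\x}-\x^*_{\gamma}\|^2\le\tfrac{L_{\gamma}}{2}R^2,
\]
where the radius assumption (together with the contraction of the gradient step for the strongly convex $\phi_{\gamma}$) controls the remaining initial distance. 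Adding the two bounds produces the constant $\frac{L_{\gamma}+\mu}{2}R^2$ and completes the argument. Note that the Hölderian error bound is not needed here, as the lemma concerns only the inner solver applied to the fixed penalized problem \eqref{pb:phi_psi}.

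The hard part will be the bookkeeping around the initialization. The cited theorem is stated for a generic warm start, whereas here $\x_0$ is itself the output of the extra prox-gradient step in Lines 2--3; this warm-up appears to be present precisely so that the composite objective gap $\Phi_{\gamma}(\x_0)-\Phi_{\gamma}(\x^*_{\gamma})$ can be dominated by a distance-squared term, which is not automatic for a nonsmooth $\psi_{\gamma}$. The delicate point is therefore to invoke the descent inequality at the correct iterate and to confirm that the quantity $\|\tilde{\y}-\x^*_{\gamma}\|$ arising from the step $\tilde{\y}=\y_0-L_{\gamma}^{-1}\nabla\phi_{\gamma}(\x_{-1})$ is genuinely controlled by $R$ via the hypothesis $\max\{\|\y_0-\x^*_{\gamma}\|,\|\tilde{\x}-\x^*_{\gamma}\|\}\le R$ and the non-expansiveness of the proximal-gradient map. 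Everything else is a direct transcription of \cite{lin2014adaptive} into the notation of \eqref{pb:phi_psi}, requiring no estimates beyond the standard composite-gradient inequalities.
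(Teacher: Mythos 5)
Your proposal follows essentially the same route as the paper's proof: apply Theorem 1 of \cite{lin2014adaptive} from the warm-started point $\x_0=\tilde{\x}$, bound the initial objective gap $\Phi_{\gamma}(\tilde{\x})-\Phi_{\gamma}(\x^*_{\gamma})$ by a one-step proximal-gradient descent inequality (the paper cites Theorem 3.1 of \cite{beck2009fast} for precisely this, stating $\Phi_{\gamma}(\tilde{\x})-\Phi_{\gamma}(\x^*_{\gamma})\le \tfrac{L_{\gamma}}{2}\|\y_0-\x^*_{\gamma}\|^2$), and combine with the radius assumption to get the constant $\tfrac{L_{\gamma}+\mu}{2}R^2$. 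The $\tilde{\y}$-versus-$\y_0$ bookkeeping you flag as the delicate point is simply glossed over in the paper, which writes the warm-up bound directly in terms of $\|\y_0-\x^*_{\gamma}\|$; your treatment is, if anything, more explicit about that step.
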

\begin{proof}
Denote $L_{\gamma}=L_{f_1}+\gamma L_{g_1}$.
By Theorem 3.1 in \cite{beck2009fast}, we have
\begin{equation}
\label{thm3.1ISTA}
\Phi_{\gamma}( \tilde{\x} ) - \Phi_{\gamma}(\x^*_{\gamma}) \le \frac{L_{\gamma}}{2}\| \y_0-\x^*_{\gamma} \|^2.
\end{equation}
Utilize Theorem 1 in \cite{lin2014adaptive}, we have
\begin{equation}
\label{eq:lalala1}
\begin{split}
\Phi_{\gamma}(\x_k) - \Phi_{\gamma}(\x^*_{\gamma})
&\leq \left(\Phi_{\gamma}( \tilde{\x} ) - \Phi_{\gamma}(\x^*_{\gamma}) + \frac{\mu}{2}\| \tilde{\x} - \x^*_{\gamma}\|^2 \right) \left(1 - \sqrt{\frac{\mu}{L_{\gamma}}}\right)^{k}\\
&\overset{\eqref{thm3.1ISTA}}{\leq} \left(\frac{L_{\gamma}}{2}\| \y_0-\x^*_{\gamma} \|^2 + \frac{\mu}{2}\| \tilde{\x} - \x^*_{\gamma}\|^2 \right) \left(1 - \sqrt{\frac{\mu}{L_{\gamma}}}\right)^{k}\\
&\leq \left(\frac{L_{\gamma} + \mu}{2} R^2 \right) \left(1 - \sqrt{\frac{\mu}{L_{\gamma}}}\right)^{k}.
\end{split}
\end{equation}
\end{proof}

By Lemma \ref{lem:strongFista}, we are now prepared to prove Theorem \ref{thm:strongFista}.
\begin{proof}
By Lemma \ref{lem:strongFista}, the number of iterations required to achieve an $\epsilon$-optimal solution for problem \eqref{pb:penal} is
{\small
\begin{equation*}
K = \mathcal{O}\left( \sqrt{\frac{L_{\gamma}}{\mu}}\log\left( \frac{ L_{\gamma}+\mu}{2\epsilon}R^2 \right) \right) = \mathcal{O}\left( \sqrt{\frac{L_{\gamma}}{\mu}}\log\frac{1}{\epsilon} \right).
\end{equation*}}
\begin{itemize}
\item \textbf{Case of $\alpha>1$.} In this case, $\gamma = \gamma^* + 2l_F^{\beta } \epsilon^{1-\beta}$, where $\gamma^* = \rho l_F^{\alpha}(\alpha-1)^{\alpha-1}\alpha^{-\alpha}\epsilon^{1-\alpha}$.

If $\beta < \alpha$, the dominating term in $\gamma$ is $\gamma^*$. Then, the number of iterations is
{\small
\begin{equation*}
K=\mathcal{O}\left(\sqrt{\frac{L_{f_1}+l_F^{\alpha}\epsilon^{1-\alpha} L_{g_1}}{\mu}}\log\frac{1}{\epsilon}\right)
={\mathcal{O}}\left( \sqrt{ \frac{L_{f_1} }{\mu}} \log\frac{1}{\epsilon} + \sqrt{\frac{l_F^{\alpha}L_{g_1}}{\epsilon^{\alpha-1}}} \log\frac{1}{\epsilon} \right).
\end{equation*}}

If $\beta=\alpha$, we have $ \gamma=\left( \rho(\alpha-1)^{\alpha-1}\alpha^{-\alpha}+2 \right)l_F^{\alpha}\epsilon^{1-\alpha} $. Then, the number of iterations is
{\small
\begin{equation*}
K=\mathcal{O}\left(\sqrt{\frac{L_{f_1}+l_F^{\alpha}\epsilon^{1-\alpha} L_{g_1}}{\mu}}\log\frac{1}{\epsilon}\right)
={\mathcal{O}}\left( \sqrt{ \frac{L_{f_1} }{\mu}} \log\frac{1}{\epsilon} + \sqrt{\frac{l_F^{\alpha}L_{g_1}}{\epsilon^{\alpha-1}}} \log\frac{1}{\epsilon} \right).
\end{equation*}
}

If $\beta>\alpha$, the dominating term in $\gamma$ is $2l_F^{\beta } \epsilon^{1-\beta}$. Then, the number of iterations is
{\small
\begin{equation*}
K=\mathcal{O}\left(\sqrt{\frac{L_{f_1} + 2l_F^{\beta } \epsilon^{1-\beta} L_{g_1}}{\mu}}\log\frac{1}{\epsilon}\right)
={\mathcal{O}}\left( \sqrt{ \frac{L_{f_1} }{\mu}}\log\frac{1}{\epsilon} + \sqrt{\frac{l_F^{\beta}L_{g_1}}{\epsilon^{\beta-1}}}\log\frac{1}{\epsilon}\right).
\end{equation*}
}

\item \textbf{Case of $\alpha=1$.} When $\alpha = 1$, $\gamma$ can be written as $\gamma = \gamma^*+l_F^{\beta}\epsilon^{1-\beta}$, where $\gamma^* = \rho l_F $.

If $\beta< 1$, the dominating term in $\gamma$ is $\gamma^*$. Then, the number of iterations is
{\small
\begin{equation*}
K=\mathcal{O}\left(\sqrt{\frac{L_{f_1} + \rho l_F L_{g_1}}{\mu}}\log\frac{1}{\epsilon}\right)
={\mathcal{O}}\left( \sqrt{ \frac{L_{f_1} }{\mu}}\log\frac{1}{\epsilon} + \sqrt{\frac{l_FL_{g_1}}{\epsilon^{\alpha-1}}}\log\frac{1}{\epsilon}\right).
\end{equation*}
}

If $\beta=1$, we have $\gamma=(\rho+1)l_F\epsilon^{1-\alpha} $. Then, the number of iterations is
{\small
\begin{equation*}
K=\mathcal{O}\left(\sqrt{\frac{L_{f_1} + \rho l_F L_{g_1}}{\mu}}\log\frac{1}{\epsilon}\right)
={\mathcal{O}}\left( \sqrt{ \frac{L_{f_1} }{\mu}}\log\frac{1}{\epsilon} + \sqrt{\frac{l_FL_{g_1}}{\epsilon^{\alpha-1}}}\log\frac{1}{\epsilon}\right).
\end{equation*}
}

If $\beta>1$, the dominating term in $\gamma$ is $l_F^{\beta}\epsilon^{1-\beta}$. Then, we have
{\small
\begin{equation*}
K=\mathcal{O}\left(\sqrt{\frac{L_{f_1} + l_F^{\beta}\epsilon^{1-\beta} L_{g_1}}{\mu}}\log\frac{1}{\epsilon}\right)
={\mathcal{O}}\left( \sqrt{ \frac{L_{f_1} }{\mu}}\log\frac{1}{\epsilon} + \sqrt{\frac{l_F^{\beta}L_{g_1}}{\epsilon^{\beta-1}}}\log\frac{1}{\epsilon}\right).
\end{equation*}
}
\end{itemize}

Combining the above results, we conclude that
\[
K={\mathcal{O}}\left( \sqrt{ \frac{L_{f_1} }{\mu}}\log\frac{1}{\epsilon} + \sqrt{\frac{l_F^{\max\{\alpha,\beta\}}L_{g_1}}{\epsilon^{\max\{\alpha-1,\beta-1\}}}}\log\frac{1}{\epsilon}\right).
\]
\end{proof}

\subsection{Proof of Theorem \ref{thm:subgradient}}
\label{proof of thm:subgradient}
\begin{proof}
Denote $l_{\gamma}=l_{f_2}+\gamma l_{g_2}$. Define $\Phi_{\gamma,best}^{K}=\min\limits_{i=0,...,K}\Phi_{\gamma}(\x_i)$ and $\hat{\Phi}_{\gamma,best}^{K,j} = \min\limits_{i=j,...,K}\Phi_{\gamma}(\x_i)$ for all $0\leq j \leq K$. We claim that the sequence generated by the subgradient method satisfies
\begin{equation}
\label{illustrate}
\Phi_{\gamma,best}^{K} - \Phi_{\gamma}^* \leq \frac{l_{\gamma}}{4}\frac{R^2 + 2\log 2}{\sqrt{K+2}}.
\end{equation}
Specifically, from Lemma 8.24 in \cite{beck2017first}, for all $0\leq j \leq K$, we have
\begin{equation}
\label{iter of nonsm con3}
\hat{\Phi}_{\gamma,best}^{K,j} - \Phi_{\gamma}^*\leq \frac{1}{2}\frac{R^2 + \sum_{k = j}^{K}\eta_k^2 \|\xi_k\|^2}{\sum_{k = j}^{K}\eta_k}.
\end{equation}

Define $\lfloor \cdot \rfloor$ and $\lceil \cdot \rceil$ as rounding up and rounding down, respectively. Let $j =\lfloor \frac{K}{2} \rfloor $ in \eqref{iter of nonsm con3}, by the definition of step-size $\eta_k = \frac{R}{l_{\gamma}\sqrt{k+1}}$, we have
\begin{equation}
\label{iter of nonsm con4}
\hat{\Phi}_{\gamma,best}^{K,j} - \Phi_{\gamma}^*  \leq \frac{l_{\gamma}}{2}\frac{R^2 + \sum_{k =\lfloor \frac{K}{2}\rfloor }^{K}\frac{1}{k+1}}{\sum_{k =\lfloor \frac{K}{2} \rfloor }^{K}\frac{1}{\sqrt{k+1}}} \leq \frac{l_{\gamma}}{4}\frac{R^2 + 2\log 2}{\sqrt{K+2}},
\end{equation}
where the second inequality follows from that $ \sum_{k = 
\lfloor \frac{K}{2} \rfloor }^{K}\frac{1}{k+1} \leq \int_{\lceil \frac{K}{2} \rceil -1}^K \frac{1}{s+1} ds \leq 2\log 2$ and $\sum_{k = \lfloor \frac{K}{2} \rfloor }^{K}\frac{1}{\sqrt{k+1}} \geq \int_{ \lceil \frac{K}{2} \rceil }^{K+1} \frac{1}{\sqrt{s + 1}}ds
\geq \frac{1}{2}\sqrt{K+2}$.

From the fact that $\Phi_{\gamma,best}^{K} \leq \hat{\Phi}_{\gamma,best}^{K,j}$, The desired result of \eqref{illustrate} follows.

Then, inequality \eqref{illustrate} demonstrates that the number of iterations to obtain an $\epsilon$-optimal solution for problem \eqref{pb:penal} is
{\small
\begin{equation*}
K=\mathcal{O}\left({\frac{l_{f_2}+\gamma l_{g_2}}{\epsilon}}\right)^2.
\end{equation*}
}

\begin{itemize}
\item \textbf{Case of $\alpha>1$.} we have $\gamma = \gamma^* + 2l_{f_2}^{\beta } \epsilon^{1-\beta}$ and $\gamma^* = \rho l_{f_2}^{\alpha}(\alpha-1)^{\alpha-1}\alpha^{-\alpha}\epsilon^{1-\alpha}$.

If $\beta<\alpha$, the dominating term in $\gamma$ is $\gamma^*$. Then, the number of iterations is
{\small
\begin{equation*}
K=\mathcal{O}\left({\frac{l_{f_2}+l_{f_2}^{\alpha}\epsilon^{1-\alpha} l_{g_2}}{\epsilon}}\right)^2
=\mathcal{O}\left( { \frac{l_{f_2}^2}{\epsilon^2}} + {\frac{l_{f_2}^{2\alpha}l_{g_2}^2}{\epsilon^{2\alpha}}}\right).
\end{equation*}
}

If $\beta=\alpha$, we have $ \gamma=\left( \rho(\alpha-1)^{\alpha-1}\alpha^{-\alpha}+2 \right)l_F^{\alpha}\epsilon^{1-\alpha} $. Then, the number of iterations is
{\small
\begin{equation*}
K=\mathcal{O}\left({\frac{l_{f_2}+l_{f_2}^{\alpha}\epsilon^{1-\alpha} l_{g_2}}{\epsilon}}\right)^2
=\mathcal{O}\left( { \frac{l_{f_2}^2}{\epsilon^2}} + {\frac{l_{f_2}^{2\alpha}l_{g_2}^2}{\epsilon^{2\alpha}}}\right).
\end{equation*}
}

If $\beta>\alpha$, the dominating term in $\gamma$ is $2l_F^{\beta } \epsilon^{1-\beta}$. Then, the number of iterations is
{\small
\begin{equation*}
K=\mathcal{O}\left({\frac{l_{f_2}+2l_{f_2}^{\beta} \epsilon^{1-\beta}l_{g_2} }{\epsilon}}\right)^2
= \mathcal{O}\left( {\frac{l_{f_2}^2}{\epsilon^2}} + {\frac{l_{f_2}^{2\beta}l_{g_2}^2}{\epsilon^{2\beta}}}\right).
\end{equation*}
}

\item \textbf{Case of $\alpha=1$.} we have $\gamma = \gamma^*+l_{f_2}^{\beta}\epsilon^{1-\beta}$ and $\gamma^* = \rho l_{f_2} $.

If $\beta < 1$, the dominating term in $\gamma$ is $\gamma^*$. Then, the number of iterations is
{\small
\begin{equation*}
K=\mathcal{O}\left({\frac{l_{f_2}+\rho l_{f_2} l_{g_2}}{\epsilon}}\right)^2
=\mathcal{O}\left( { \frac{l_{f_2}^2 }{\epsilon^2}} + {\frac{l_{f_2}^2 l_{g_2}^2}{\epsilon^2}}\right).
\end{equation*}
}

If $\beta=1$, we have $\gamma=(\rho+1)l_F\epsilon^{1-\alpha} $. Then, the number of iterations is
{\small
\begin{equation*}
K=\mathcal{O}\left({\frac{l_{f_2}+\rho l_{f_2} l_{g_2}}{\epsilon}}\right)^2
=\mathcal{O}\left( { \frac{l_{f_2}^2 }{\epsilon^2}} + {\frac{l_{f_2}^2 l_{g_2}^2}{\epsilon^2}}\right).
\end{equation*}
}

If $\beta>1$, the dominating term in $\gamma$ is $l_{f_2}^{\beta} \epsilon^{1-\beta}$. Then, the number of iterations is
{\small
\begin{equation*}
K=\mathcal{O}\left({\frac{l_{f_2}+ l_{f_2}^{\beta} l_{g_2} \epsilon^{1-\beta} }{\epsilon}}
\right)^2
=\mathcal{O}\left({ \frac{l_{f_2}^2 }{\epsilon^2}} + {\frac{l_{f_2}^{2\beta} l_{g_2}^2}{\epsilon^{2\beta} }}\right).
\end{equation*}
}

\end{itemize}
Combining the above results, we conclude that
\[
K= \mathcal{O}\left({\frac{ l_{f_2}^2}{ \epsilon^2 } } + {\frac {l_{f_2}^{\max\{ 2\alpha,2\beta\} }l_{g_2}^2}{\epsilon^{\max\{2\alpha,2\beta\}}}}\right).
\]
\end{proof}

\subsection{Proof of Theorem \ref{thm:subgradientstrong}}
\label{proof of thm:subgradientstrong}
\begin{proof}
Denote $l_{\gamma}=l_{f_2}+\gamma l_{g_2}$, define $\Phi_{\gamma,best}^{K}=\min\limits_{i=0,...,K}\Phi_{\gamma}(\x_i)$. From Theorem 8.31 in \cite{beck2017first}, the sequence generated by the subgradient method satisfies
\begin{equation*}
\Phi_{\gamma,best}^{K}-\Phi_{\gamma}^*\leq \frac{2 l_{\gamma}^2}{\mu_{f_2} (K+1)}.
\end{equation*}
This demonstrates that the number of iterations to obtain an $\epsilon$-optimal solution for problem \eqref{pb:penal} is
{\small
\begin{equation*}
K=\mathcal{O}\left({\frac{(l_{f_2}+\gamma l_{g_2})^2}{\mu_{f_2}\epsilon}}\right).
\end{equation*}
}

\begin{itemize}
\item \textbf{Case of $\alpha>1$. } we have $\gamma = \gamma^* + 2l_{f_2}^{\beta} \epsilon^{1-\beta}$ and $\gamma^* = \rho l_{f_2}^{\alpha}(\alpha-1)^{\alpha-1}\alpha^{-\alpha}\epsilon^{1-\alpha}$.

If $\beta< \alpha$, the dominating term in $\gamma$ is $\gamma^*$. Then, the number of iterations is
{\small
\begin{equation*}
K=\mathcal{O}\left({\frac{(l_{f_2}+l_{f_2}^{\alpha}\epsilon^{1-\alpha} l_{g_2})^2}{\mu_{f_2}\epsilon}}\right)
=\mathcal{O}\left( { \frac{l_{f_2}^2}{\mu_{f_2}\epsilon}} + {\frac{l_{f_2}^{2\alpha}l_{g_2}^2}{\mu_{f_2}\epsilon^{2\alpha - 1}}}\right).
\end{equation*}
}

If $\beta=\alpha$, we have $ \gamma=\left( \rho(\alpha-1)^{\alpha-1}\alpha^{-\alpha}+2 \right)l_F^{\alpha}\epsilon^{1-\alpha} $. Then, the number of iterations is
{\small
\begin{equation*}
K=\mathcal{O}\left({\frac{(l_{f_2}+l_{f_2}^{\alpha}\epsilon^{1-\alpha} l_{g_2})^2}{\mu_{f_2}\epsilon}}\right)
=\mathcal{O}\left( { \frac{l_{f_2}^2}{\mu_{f_2}\epsilon}} + {\frac{l_{f_2}^{2\alpha}l_{g_2}^2}{\mu_{f_2}\epsilon^{2\alpha - 1}}}\right).
\end{equation*}
}

If $\beta>\alpha$, the dominating term in $\gamma$ is $2l_F^{\beta } \epsilon^{1-\beta}$. Then, the number of iterations is
{\small
\begin{equation*}
K=\mathcal{O}\left({\frac{(l_{f_2}+2l_{f_2}^{\beta} \epsilon^{1-\beta}l_{g_2})^2 }{\mu_{f_2}\epsilon}}\right)
= \mathcal{O}\left( {\frac{l_{f_2}^2}{\mu_{f_2}\epsilon}} + {\frac{l_{f_2}^{2\beta}l_{g_2}^2}{\mu_{f_2}\epsilon^{2\beta-1}}}\right).
\end{equation*}
}

\item \textbf{Case of $\alpha>1$. } we have $\gamma = \gamma^*+l_{f_2}^{\beta}\epsilon^{1-\beta}$ and $\gamma^* = \rho l_{f_2} $.

If $\beta< 1$, the dominating term in $\gamma$ is $\gamma^*$. Then, the number of iterations is
{\small
\begin{equation*}
K=\mathcal{O}\left({\frac{(l_{f_2}+\rho l_{f_2} l_{g_2})^2}{\mu_{f_2}\epsilon}}\right)
=\mathcal{O}\left( { \frac{l_{f_2}^2 }{\mu_{f_2}\epsilon}} + {\frac{l_{f_2}^2 l_{g_2}^2}{\mu_{f_2}\epsilon}}\right).
\end{equation*}
}

If $\beta=1$, we have $\gamma=(\rho+1)l_F\epsilon^{1-\alpha} $. Then, the number of iterations is
{\small
\begin{equation*}
K=\mathcal{O}\left({\frac{(l_{f_2}+\rho l_{f_2} l_{g_2})^2}{\mu_{f_2}\epsilon}}\right)
=\mathcal{O}\left( { \frac{l_{f_2}^2 }{\mu_{f_2}\epsilon}} + {\frac{l_{f_2}^2 l_{g_2}^2}{\mu_{f_2}\epsilon}}\right).
\end{equation*}
}

If $\beta>1$, the dominating term in $\gamma$ is $l_{f_2}^{\beta} \epsilon^{1-\beta}$. Then, the number of iterations is
{\small
\begin{equation*}
K=\mathcal{O}\left({\frac{(l_{f_2}+ l_{f_2}^{\beta} l_{g_2} \epsilon^{1-\beta})^2 }{\mu_{f_2}\epsilon}}
\right)
=\mathcal{O}\left({ \frac{l_{f_2}^2 }{\mu_{f_2}\epsilon}} + {\frac{l_{f_2}^{2\beta} l_{g_2}^2}{\mu_{f_2}\epsilon^{2\beta - 1} }}\right).
\end{equation*}
}
\end{itemize}
Combining the above results, we conclude that
\[
K= \mathcal{O}\left({\frac{ l_{f_2}^2}{\mu_{f_2} \epsilon } } + {\frac {l_{f_2}^{\max\{ 2\alpha,2\beta\} }l_{g_2}^2}{\mu_{f_2}\epsilon^{\max\{2\alpha-1,2\beta-1\}}}}\right).
\]
\end{proof}

\section{Implementation details}
\label{add resu for exper}
In this section, we provide supplementary experiment settings and results. Specifically, in Appendix \ref{deta expl}, we present the detailed experimental settings, and in Appendix \ref{detailed results}, we provide the detailed experimental results. Additionally, in Appendix \ref{addi experi} and \ref{addi experi epsi}, we conduct experiments with different values of penalty parameter $\gamma$ and solution accuracy $\epsilon$, respectively.

\subsection{Experiment setting} 
\label{deta expl}
All simulations are implemented using MATLAB R2023a on a PC running Windows 11 with an AMD (R) Ryzen (TM) R7-7840H CPU (3.80GHz) and 16GB RAM.

\subsubsection{Experiment setting of Section \ref{logistic regression} }
We conduct the first experiment using the \texttt{a1a.t} data from LIBSVM datasets\footnote{\url{https://www.csie.ntu.edu.tw/~cjlin/libsvmtools/datasets/binary/a1a.t}}. This data consists of $30,956$ instances, each with $n=123$ features.  For this experiment, a sample of $1,000$ instances is taken from the data, denoted as $A$. The corresponding labels for these instances are denoted as $b$, where each label $b_i$ is either $-1$ or $1$, corresponding to the $i$-th instance $\mathbf{a}_i$.

The Greedy FISTA algorithm \citep{liang2022improving} is used as a benchmark to compute $G^*$. To compute the proximal mapping of $f_2(\mathbf{x}) + \gamma g_2(\mathbf{x})$ in problem \eqref{pb:penal}, i.e, projection onto a $1$-norm ball, we utilize the method proposed in \cite{duchi2008efficient}, which performs exact projection in $\mathcal{O}(n)$ expected time, where n is the dimension of $\x$.

For the PB-APG and PB-APG-sc algorithms, we set the value of $\gamma=10^5$, and we terminate the algorithms when $\|\mathbf{x}_{k+1}-\mathbf{x}_{k}\|\leq 10^{-10}$. For the aPB-APG and aPB-APG-sc algorithms, we set $\gamma_0 = \frac{1}{2^5}$, $\nu = 20$, $\eta = 10$, and $\epsilon_0 = 10^{-6}$. The iterations of these two algorithms continue until $\epsilon_k$ reaches $10^{-10}$ (meanwhile, $\gamma = 10^5$).

We compare our methods with MNG, BiG-SAM, DBGD, a-IRG, CG-BiO, Bi-SG, and R-APM in this experiment. Specifically, for R-APM \citep{samadi2023achieving}, the regularization parameter $\eta$ is set to $\eta = 1/\gamma$, reflecting the equivalence of the penalty formulation \eqref{pb:penal} to \eqref{pb:Tikhonov}, with $\sigma=1/\gamma$, as previously discussed.

We note that the termination criterion $\|\mathbf{x}_{k+1}-\mathbf{x}_{k}\|\leq 10^{-10}$ used in our experiments is different from the one proposed in our algorithms since the parameters required for the latter are not easily measurable. Nevertheless, this termination criterion is also widely used in the literature, as it corresponds to a gradient mapping \citep{beck2017first,nesterov2018lectures,davis2019stochastic}. Furthermore, Theorem 3.5 of \cite{drusvyatskiy2018error} implies that $\|\mathbf{x}_{k+1}-\mathbf{x}_{k}\|$ also measures the distance to the optimal solution set.

\subsubsection{Experiment setting of Section \ref{Over-parameterized Regression}}
In the second experiment, we address the problem of least squares regression using the \texttt{YearPredictionMSD} data from the UCI Machine Learning Repository\footnote{\url{https://archive.ics.uci.edu/ml/datasets/YearPredictionMSD}}. This data consists of $515,345$ songs with release years ranging from $1992$ to $2011$. Each song has $90$ features, and the corresponding release year is used as the label. For this experiment, a sample of $m = 1,000$ songs is taken from the data, and the feature matrix and release years vector are denoted as $A$ and $b$, respectively. 

Following Section 5.2 in \cite{merchav2023convex}, we apply the min-max scaling technique to normalize the feature matrix $A$. Additionally, we add an intercept term and $90$ collinear features to $A$ such that the resulting matrix $A^TA$ becomes positive semi-definite, which implies that the feasible set $X_{\text{opt}}$ is not a singleton.

We compare our methods with a-IRG, BiG-SAM, and Bi-SG in this experiment. Specifically, for BiG-SAM \citep{sabach2017first}, we consider the accuracy parameter $\delta$ for the Moreau envelope with two values, namely $\delta = 1$ and $\delta = 0.01$. 

To benchmark the performance, we utilize the MATLAB function \texttt{lsqminnorm} to compute $G^*$. Moreover, we follow the parameter settings outlined in Section \ref{logistic regression}.

\subsection{Detailed results of experiments}
\label{detailed results}
To approximate the optimal value $F^*$, we use the MATLAB function \texttt{fmincon} to solve a relaxed version of the function-value-based reformulations in equation \eqref{pb:val-func}. In this relaxed version, we replace the constraint in \eqref{pb:val-func} with $G(\mathbf{x}) - G^* \leq \varepsilon$, where $\varepsilon = 10^{-10}$. This allows us to obtain an approximation of the optimal value while allowing for a small deviation from the true optimal value $G^*$.

We gather the total number of iterations for our methods, as well as the lower- and upper-level objective values and the optimal gaps for all the methods, in Table \ref{table for opti}. Subsequently, we compare the optimal gaps of all methods, which are defined as $G(\mathbf{x}) - G^*$ and $F(\mathbf{x}) - F^*$ for the lower- and upper-level optimal gaps, respectively.
\begin{table}[htp!]
\centering
\caption{Methods comparison: lower- and upper-level objectives and optimal gaps}
\label{table for opti}
\resizebox{1.0\textwidth}{!}{
\begin{tabular}{cccccc}
\hline
\hline
\multicolumn{6}{c}{Logistic Regression Problem \eqref{logistic-regression}}\\
\hline
Method & Total iterations & Lower-level value & Lower-level gap & Upper-level value & Upper-level gap \\
\hline
PB-APG     & 1470 & 3.2794e-01 & \textbf{1.7630e-08} & 4.9382e+00 & \textbf{-3.3998e-03}\\
aPB-APG    & 1010 & 3.2794e-01 & \textbf{1.7630e-08} & 4.9382e+00 & \textbf{-3.3998e-03}\\
PB-APG-sc  & 2278 & 3.2794e-01 & \textbf{1.7630e-08} & 4.9382e+00 & \textbf{-3.3998e-03}\\
aPB-APG-sc & 1046 & 3.2794e-01 & \textbf{1.7630e-08} & 4.9382e+00 & \textbf{-3.3998e-03}\\
MNG        & /    & 3.4540e-01 & 1.7459e-02          & 1.7469e+00 & -3.1947e+00 \\
BiG-SAM    & /    & 3.3878e-01 & 1.0840e-02          & 2.2873e+00 & -2.6543e+00 \\
DBGD       & /    & 5.2681e-01 & 1.9887e-01          & 8.8408e-02 & -4.8532e+00 \\
a-IRG      & /    & 3.3765e-01 & 9.7121e-03          & 2.5401e+00 & -2.4016e+00 \\
CG-BiO     & /    & 4.3040e-01 & 1.0246e-01          & 3.7684e-01 & -4.5648e+00 \\
Bi-SG      & /    & 3.2806e-01 & 1.1530e-04          & 4.6873e+00 & -2.5432e-01 \\
R-APM      & /    & 3.2794e-01 & 1.7645e-08          & 4.9382e+00 & -3.4013e-03 \\
\hline
\hline
\multicolumn{6}{c}{Least Squares Regression Problem \eqref{linear-regression-elastic}}\\
\hline
Method  & Total iterations & Lower-level value & Lower-level gap & Upper-level value & Upper-level gap \\
\hline
PB-APG                    & 39314 & 7.3922e-03 & 6.0034e-07          & 4.7236e+00 & -1.1888e-01 \\
aPB-APG                   & 40784 & 7.3922e-03 & \textbf{6.0030e-07} & 4.7236e+00 & \textbf{-1.1887e-01}\\
PB-APG-sc                 & 46446 & 7.3922e-03 & 6.0034e-07          & 4.7236e+00 & -1.1888e-01\\
aPB-APG-sc                & 61777 & 7.3922e-03 & 6.0035e-07          & 4.7236e+00 & -1.1888e-01\\
BiG-SAM ($\delta = 1$)    & /     & 7.5189e-03 & 1.2733e-04          & 3.5081e+00 & -1.3344e+00 \\
BiG-SAM ($\delta = 0.01$) & /     & 7.3958e-03 & 4.2281e-06          & 5.8510e+01 & 5.3668e+01 \\
a-IRG                     & /     & 1.6224e-02 & 8.8328e-03          & 4.7745e-01 & -4.3651e+00 \\
Bi-SG                     & /     & 8.5782e-03 & 1.1866e-03          & 1.3832e+00 & -3.4593e+00 \\
\hline
\hline
\end{tabular}
}
\end{table}

Table \ref{table for opti} reveals that for the logistic regression problem \eqref{logistic-regression}, our PB-APG, aPB-APG, PB-APG-sc, and aPB-APG-sc exhibit almost identical function values for both objectives, surpassing other methods in terms of optimal gaps for the lower- and upper-level objectives (measured by the numerical value of the upper-level objective). In the case of the least squares regression problem \eqref{linear-regression-elastic}, aPB-APG achieves the smallest optimal gaps for both objectives, followed by PB-APG and PB-APG-sc. These results demonstrate that our methods, despite yielding larger upper-level function values, generate solutions that are significantly closer to the optimal solution, as depicted in Figure \ref{logistic-figure}. Additionally, for the problem in \eqref{logistic-regression}, both aPB-APG and aPB-APG-sc require fewer iterations than PB-APG and PB-APG-sc, respectively. This can be attributed to the warm-start mechanism employed in aPB-APG and aPB-APG-sc. Moreover, for the problem in \eqref{linear-regression-elastic}, both aPB-APG and aPB-APG-sc require more iterations than PB-APG and PB-APG-sc, respectively. However, they exhibit staircase-shaped curves, which avoid the unwanted oscillations in PB-APG and PB-APG-sc, we have a similar observation in Figure \ref{or-figure}.

\subsection{Supplementary experiments for different penalty parameters}
\label{addi experi}
In this section, we investigate the impact of different values of penalty parameter $\gamma$ on the experimental results of problems \eqref{logistic-regression} and \eqref{linear-regression-elastic}. We set $\gamma$ to be either $2\times 10^4$ or $5\times 10^5$ for PB-APG and PB-APG-sc, and choose the corresponding $\gamma_0$ values as $\frac{0.2}{2^5}$ or $\frac{5}{2^5}$ for aPB-APG and aPB-APG-sc, respectively. The remaining settings are the same as in Section \ref{experiment}. 

We plot the values of the residuals of the lower-level objective $G(\x_k) - G^*$ and the upper-level objective over time in Figures \ref{logistic-figure2} and \ref{or-figure2}. Additionally, we also collect the total number of iterations, the lower- and upper-level objective values, and the optimal gaps of our methods in Table \ref{table for add log} for problems \eqref{logistic-regression} and \eqref{linear-regression-elastic} with different values of $\gamma$.

\begin{figure}[htp!]
\begin{center}
\includegraphics[width=0.24\linewidth]{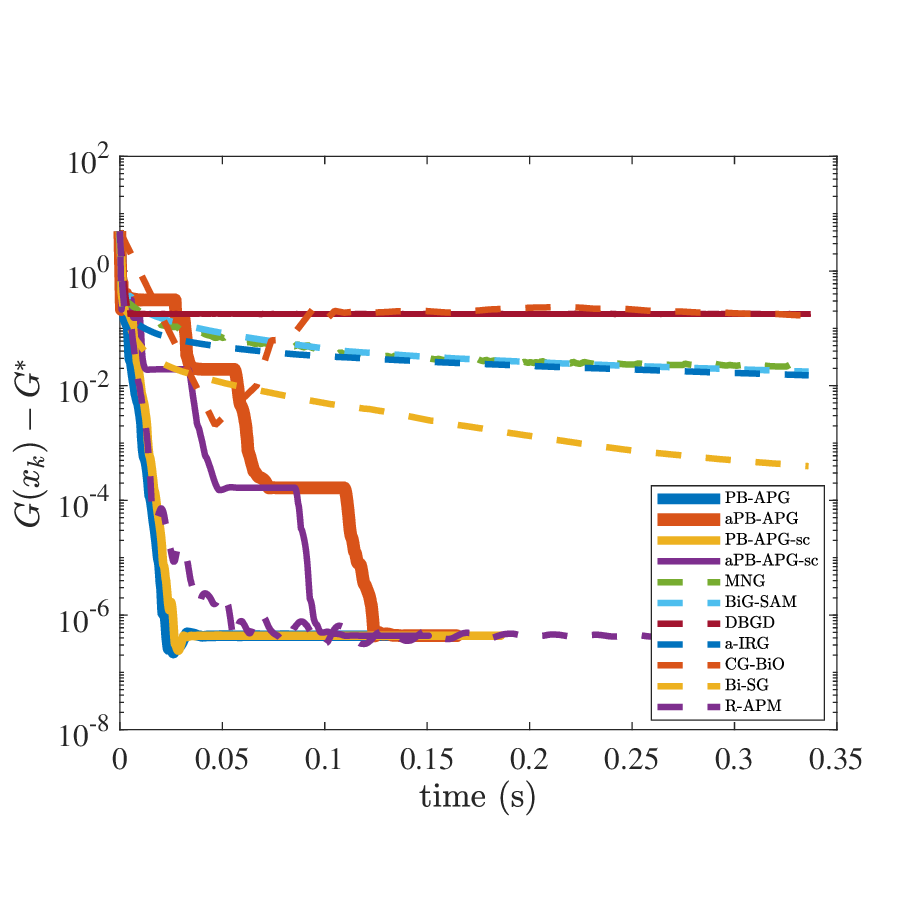}
\includegraphics[width=0.24\linewidth]{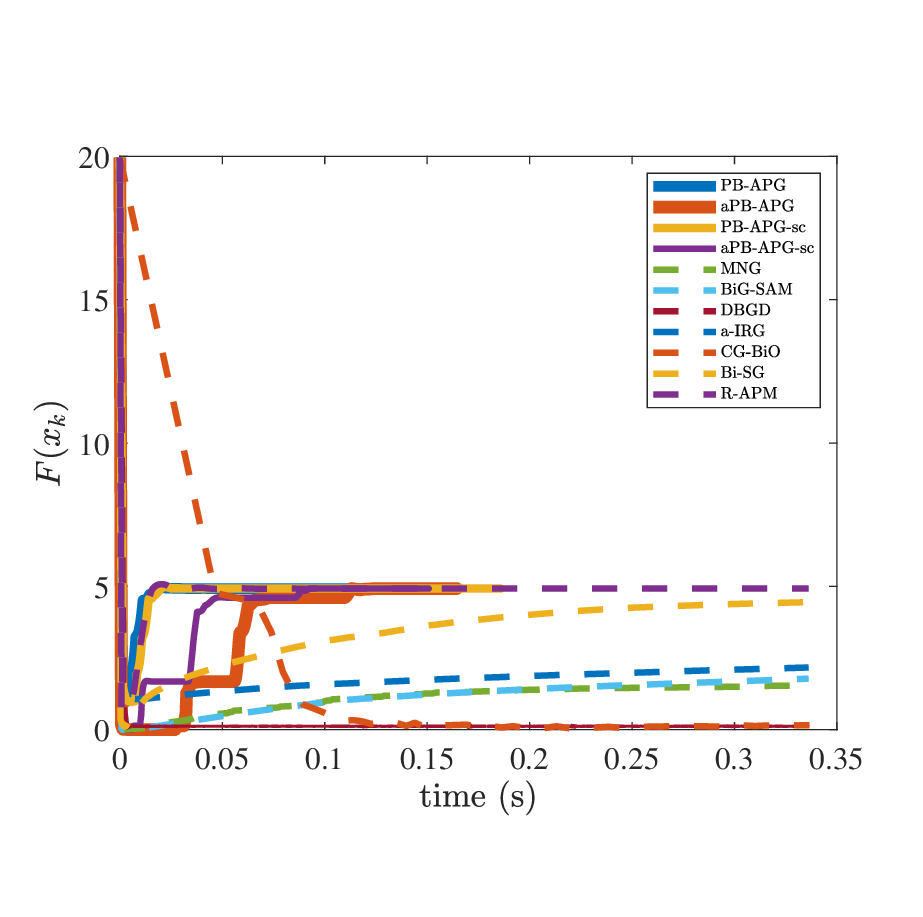}
\includegraphics[width=0.24\linewidth]{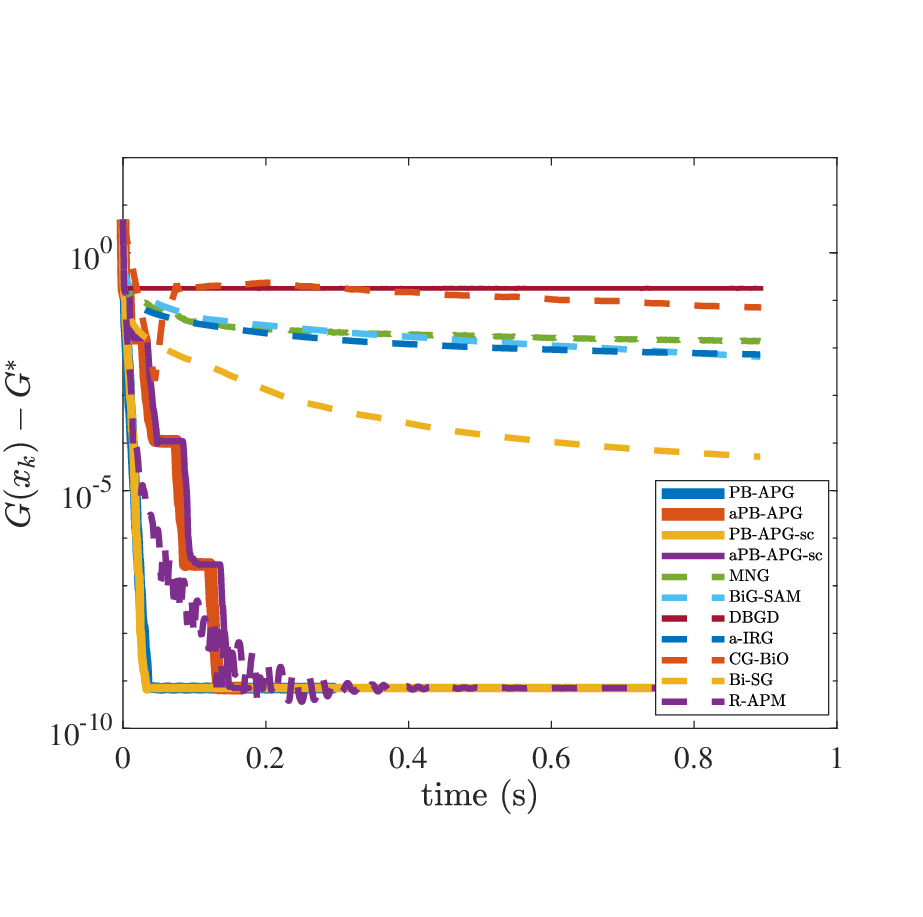}
\includegraphics[width=0.24\linewidth]{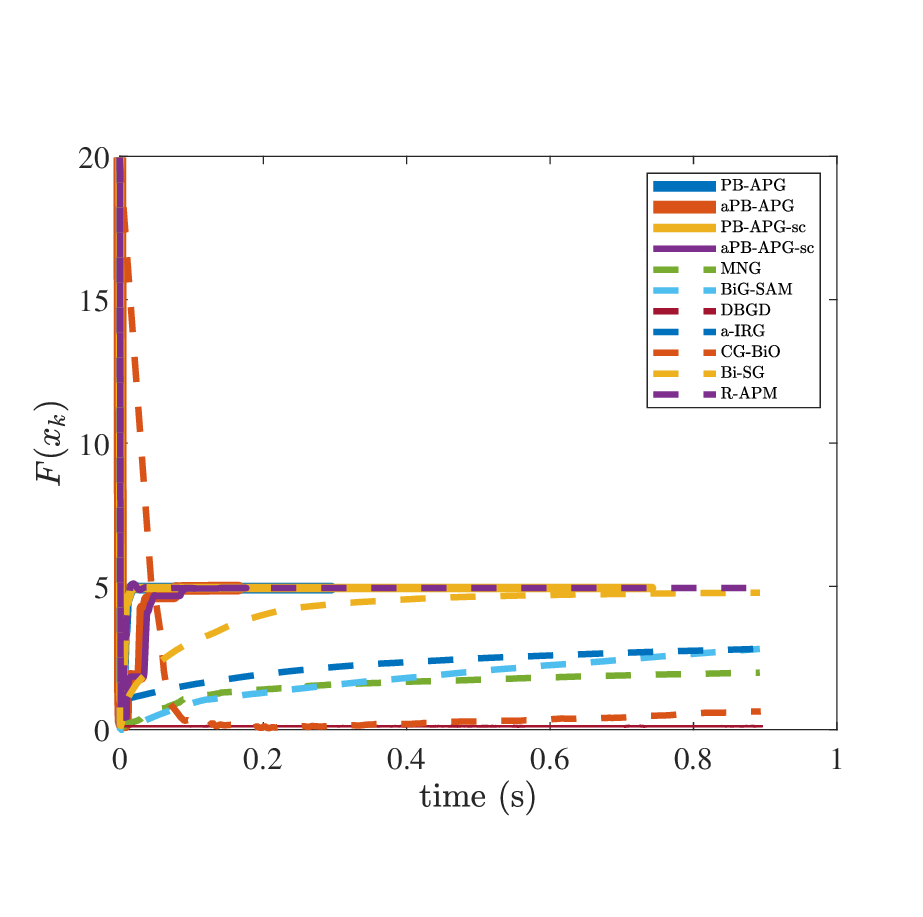}
\end{center}
\caption{LRP  \eqref{logistic-regression} with $\gamma = 2\times 10^4$ (left two subfigures) and $\gamma = 5\times 10^5$ (right two subfigures).}
\label{logistic-figure2}
\end{figure}

\begin{figure}[htp!]
\begin{center}
\includegraphics[width=0.24\linewidth]{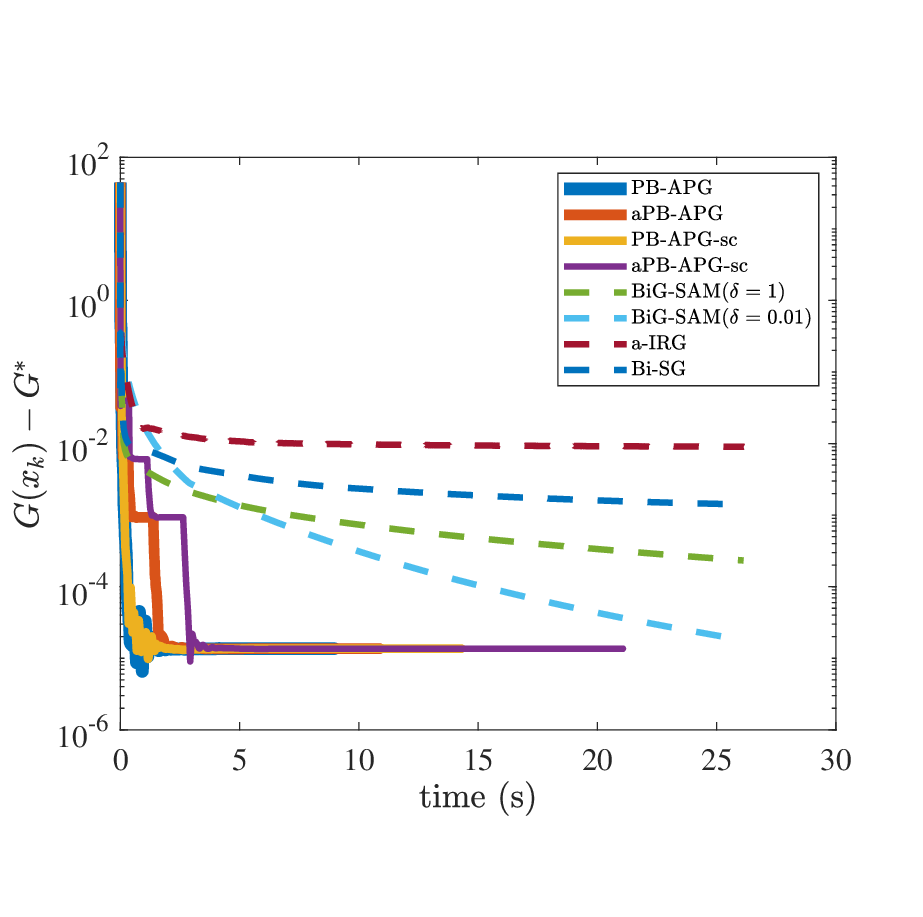}
\includegraphics[width=0.24\linewidth]{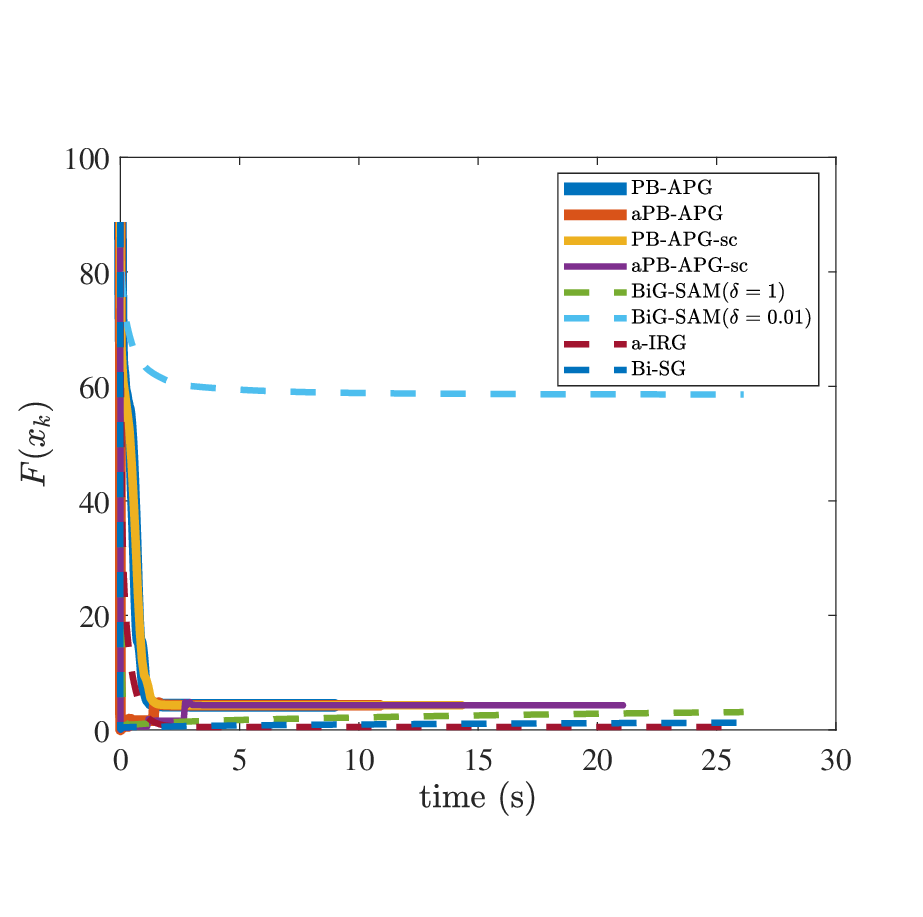}
\includegraphics[width=0.24\linewidth]{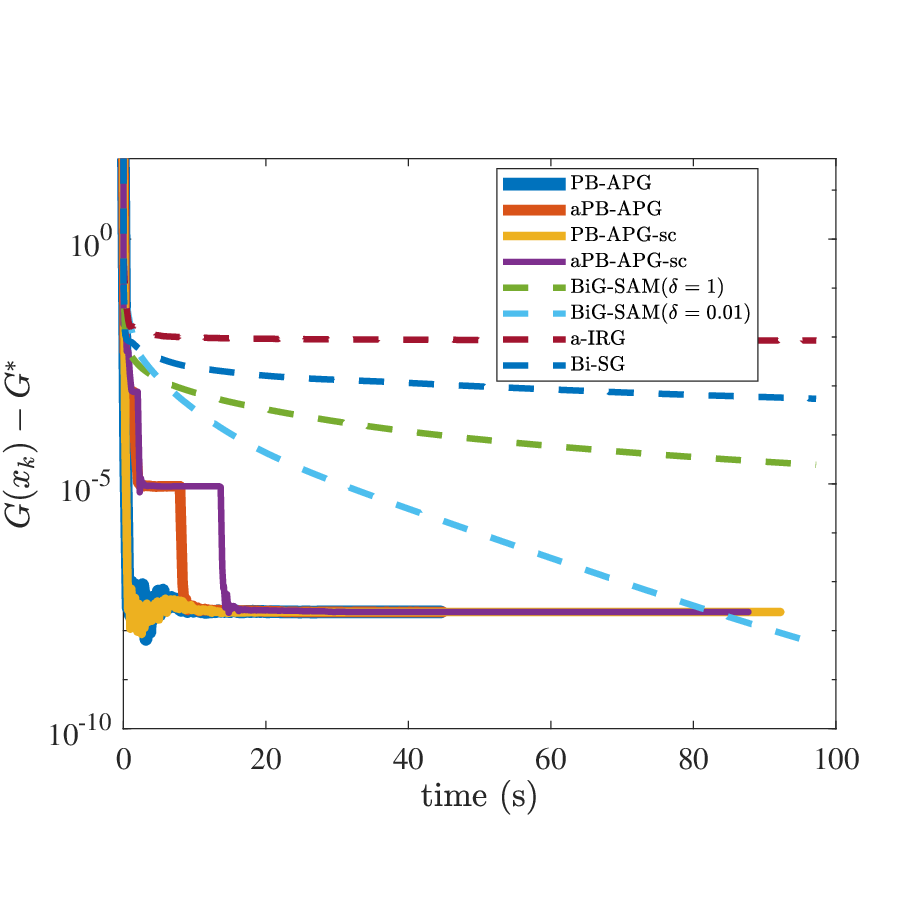}
\includegraphics[width=0.24\linewidth]{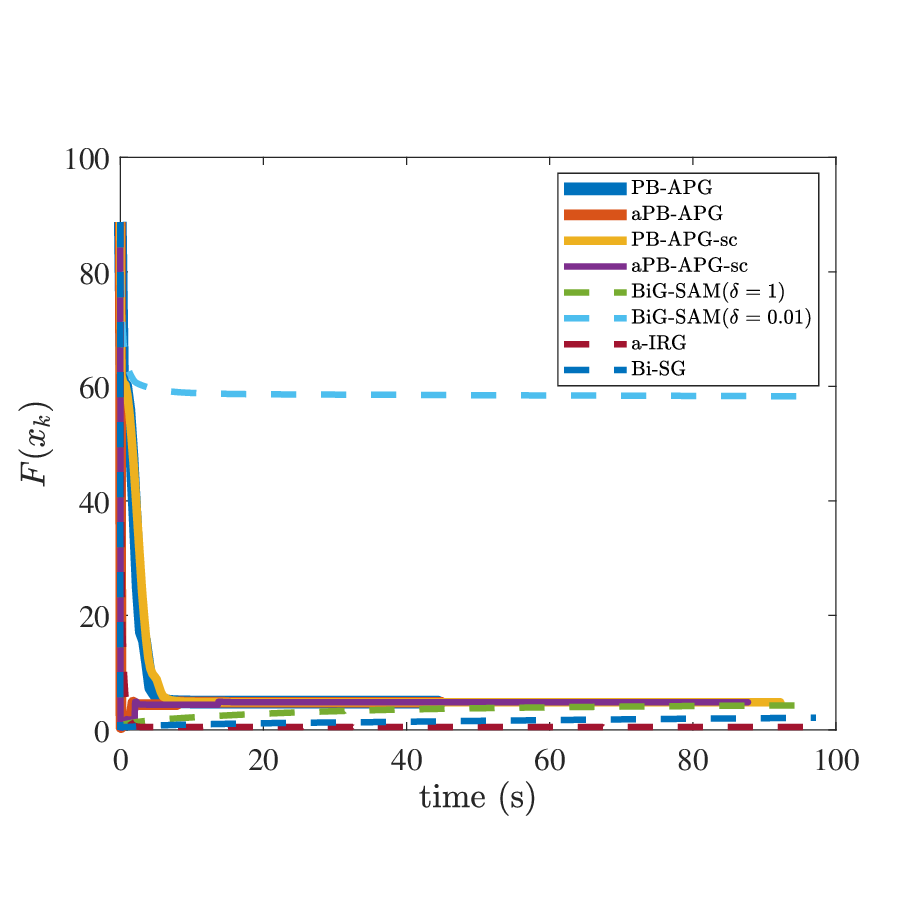}
\end{center}
\caption{LSRP \eqref{linear-regression-elastic} with $\gamma = 2\times 10^4$ (left two subfigures) and $\gamma = 5\times 10^5$ (right two subfigures).}
\label{or-figure2}
\end{figure}
As Figures \ref{logistic-figure2} and \ref{or-figure2} show, our methods consistently outperform the other methods for both the lower- and upper-level objectives, irrespective of the penalty parameter $\gamma$, since our methods achieve lower optimal gaps and desired function values for the lower- and upper-level objectives, respectively. The only exception is problem \eqref{linear-regression-elastic} with $\gamma = 5\times 10^5$, as the third subfigure of Figure \ref{or-figure2} shows, since we do not set the solution accuracy of BiG-SAM ($\delta = 0.01$), it attains a lower optimal gap than our PB-APG-sc and aPB-APG-sc for the lower-level objective. However, BiG-SAM ($\delta = 0.01$) produces significantly worse upper-level objective values, which are much larger than the objective values of our methods. 

\begin{table}[htp!]
\centering
\caption{Lower- and upper-level objectives and optimal gaps with different penalty parameters for problem \eqref{logistic-regression}.}
\label{table for add log}
\resizebox{1.0\textwidth}{!}{
\begin{tabular}{cccccc}
\hline
\hline
\multicolumn{6}{c}{$\gamma = 2\times 10^4$}\\
\hline
Method & Total iterations & Lower-level value & Lower-level gap & Upper-level value & Upper-level gap \\
\hline
PB-APG     & 883  & 3.2794e-01 & 4.3569e-07 & 4.9243e+00 & -1.7362e-02 \\
aPB-APG    & 967  & 3.2794e-01 & 4.3569e-07 & 4.9243e+00 & -1.7362e-02\\
PB-APG-sc  & 1123 & 3.2794e-01 & 4.3569e-07 & 4.9243e+00 & -1.7362e-02\\
aPB-APG-sc & 879  & 3.2794e-01 & 4.3569e-07 & 4.9243e+00 & -1.7362e-02\\
\hline
\hline
\multicolumn{6}{c}{$\gamma = 5\times 10^5$}\\
\hline
Method & Total iterations & Lower-level value & Lower-level gap & Upper-level value & Upper-level gap \\
\hline
PB-APG     & 1623 & 3.2794e-01 & 7.0685e-10 & 4.9410e+00 & -5.7820e-04 \\
aPB-APG    & 976  & 3.2794e-01 & 7.0685e-10 & 4.9410e+00 & -5.7820e-04\\
PB-APG-sc  & 4848 & 3.2794e-01 & 7.0684e-10 & 4.9410e+00 & -5.7820e-04\\
aPB-APG-sc & 1018 & 3.2794e-01 & 7.0687e-10 & 4.9410e+00 & -5.7821e-04\\
\hline
\hline
\end{tabular}
}
\end{table}

\begin{table}[htp!]
\centering
\caption{Lower- and upper-level objectives and optimal gaps with different penalty parameters for problem \eqref{linear-regression-elastic}.}
\label{table for add or}
\resizebox{1.0\textwidth}{!}{
\begin{tabular}{cccccc}
\hline
\hline
\multicolumn{6}{c}{$\gamma = 2\times 10^4$}\\
\hline
Method & Total iterations & Lower-level value & Lower-level gap & Upper-level value & Upper-level gap \\
\hline
PB-APG     & 17153 & 7.4052e-03 & 1.3619e-05 & 4.2843e+00 & -5.5818e-01 \\
aPB-APG    & 20877 & 7.4052e-03 & 1.3619e-05 & 4.2843e+00 & -5.5818e-01\\
PB-APG-sc  & 27501 & 7.4052e-03 & 1.3619e-05 & 4.2843e+00 & -5.5818e-01\\
aPB-APG-sc & 40077 & 7.4052e-03 & 1.3619e-05 & 4.2843e+00 & -5.5818e-01\\
\hline
\hline
\multicolumn{6}{c}{$\gamma = 5\times 10^5$}\\
\hline
Method & Total iterations & Lower-level value & Lower-level gap & Upper-level value & Upper-level gap \\
\hline
PB-APG     & 85511  & 7.3916e-03 & 2.4094e-08 & 4.8198e+00 & -2.2752e-02 \\
aPB-APG    & 85502  & 7.3916e-03 & 2.4093e-08 & 4.8198e+00 & -2.2752e-02\\
PB-APG-sc  & 173731 & 7.3916e-03 & 2.4071e-08 & 4.8198e+00 & -2.2740e-02\\
aPB-APG-sc & 166324 & 7.3916e-03 & 2.4091e-08 & 4.8198e+00 & -2.2751e-02\\
\hline
\hline
\end{tabular}
}
\end{table}
Tables \ref{table for opti}, \ref{table for add log}, and \ref{table for add or} reveal that the number of iterations for our methods increases as penalty parameter $\gamma$ increases. However, it is worth noting that the accuracy of the obtained solutions also increases, as indicated by the decreasing optimal gaps of the lower- and upper-level objectives. This observation confirms that the complexity results and solution accuracies of our methods are indeed dependent on the choice of penalty parameters, specifically, $L_{\gamma}$, as demonstrated in corresponding Theorem \ref{thm:fista} and other related theorems.

\subsection{Supplementary experiments for different solution accuracies}
\label{addi experi epsi}
In this section, we investigate the impact of different solution accuracies on the experimental results of problems \eqref{logistic-regression} and \eqref{linear-regression-elastic}. We set $\epsilon$ to be either $10^{-4}$ or $10^{-7}$ and terminate the algorithms for PB-APG and PB-APG-sc when $\|\x_{k+1}-\x_k\|\leq \epsilon$. For aPB-APG and aPB-APG-sc, we choose the corresponding $\epsilon_0$ values as $1$ or $10^{-3}$. The remaining settings are the same as in Section \ref{experiment}. 

We also plot the values of the residuals of the lower-level objective $G(\x_k) - G^*$ and the upper-level objective over time in Figures \ref{logistic-figure2 1e-4} and \ref{or-figure2 1e-4}. Additionally, we also collect the total number of iterations, the lower- and upper-level objective values, and the optimal gaps of our methods in Table \ref{table for add log 1e-4} for problems \eqref{logistic-regression} and \eqref{linear-regression-elastic} with different solution accuracies.
\begin{figure}[htp!]
\begin{center}
\includegraphics[width=0.24\linewidth]{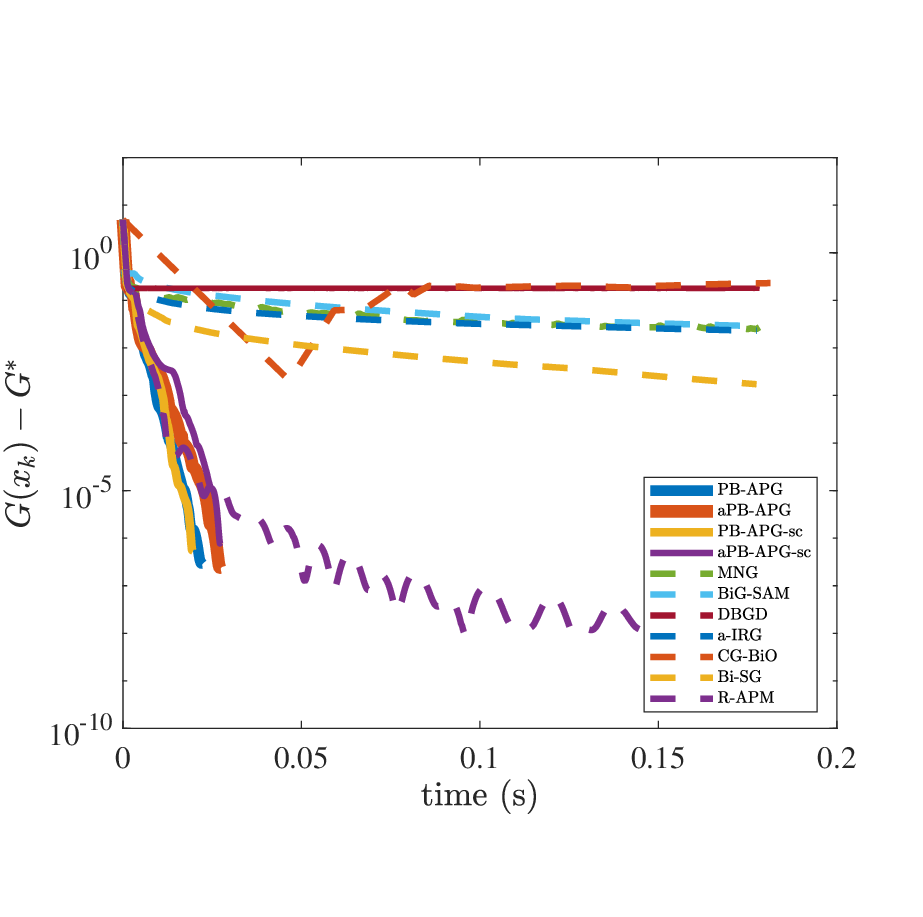}
\includegraphics[width=0.24\linewidth]{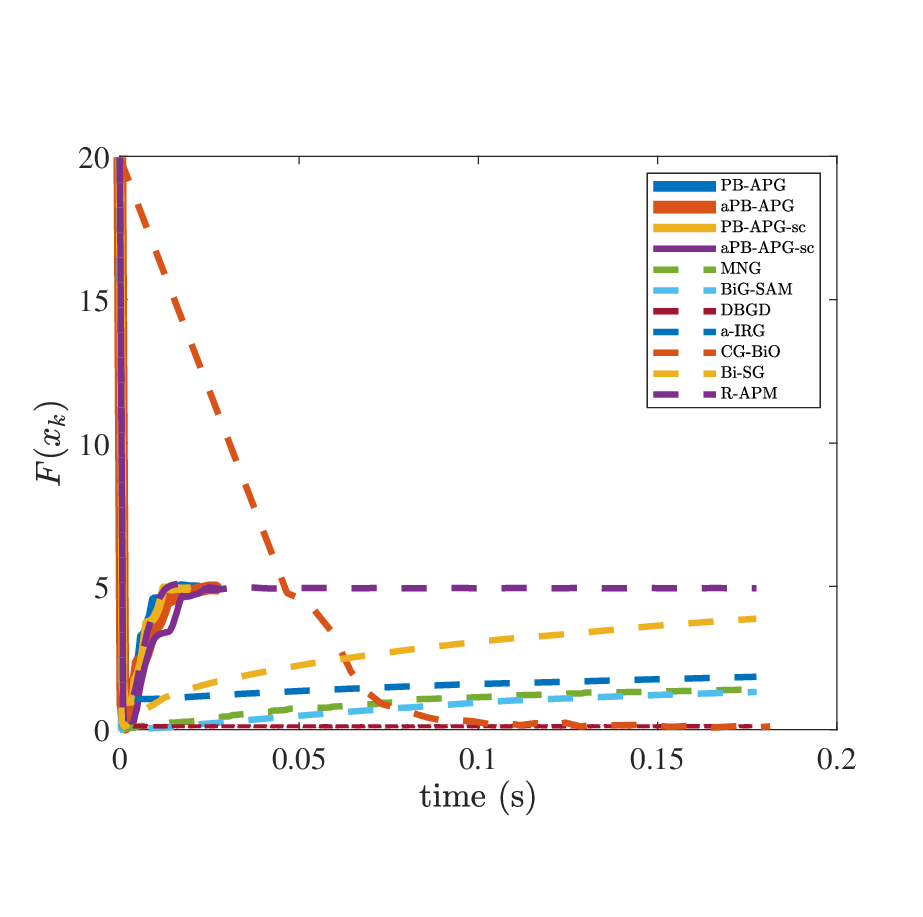}
\includegraphics[width=0.24\linewidth]{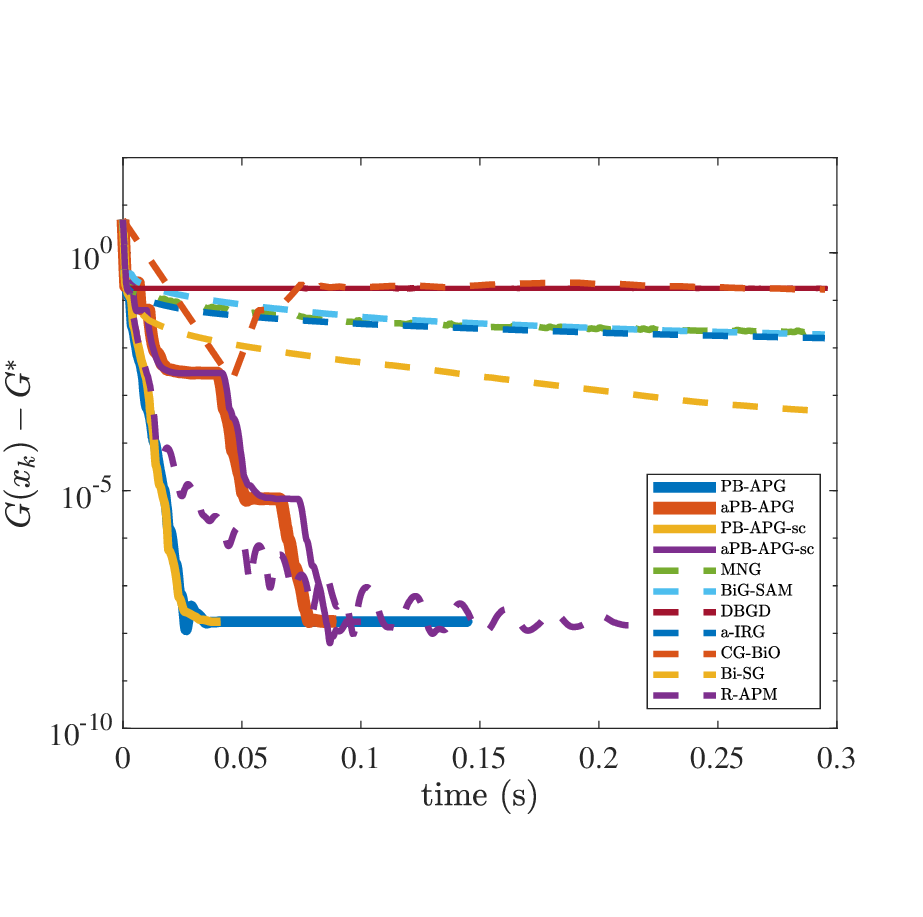}
\includegraphics[width=0.24\linewidth]{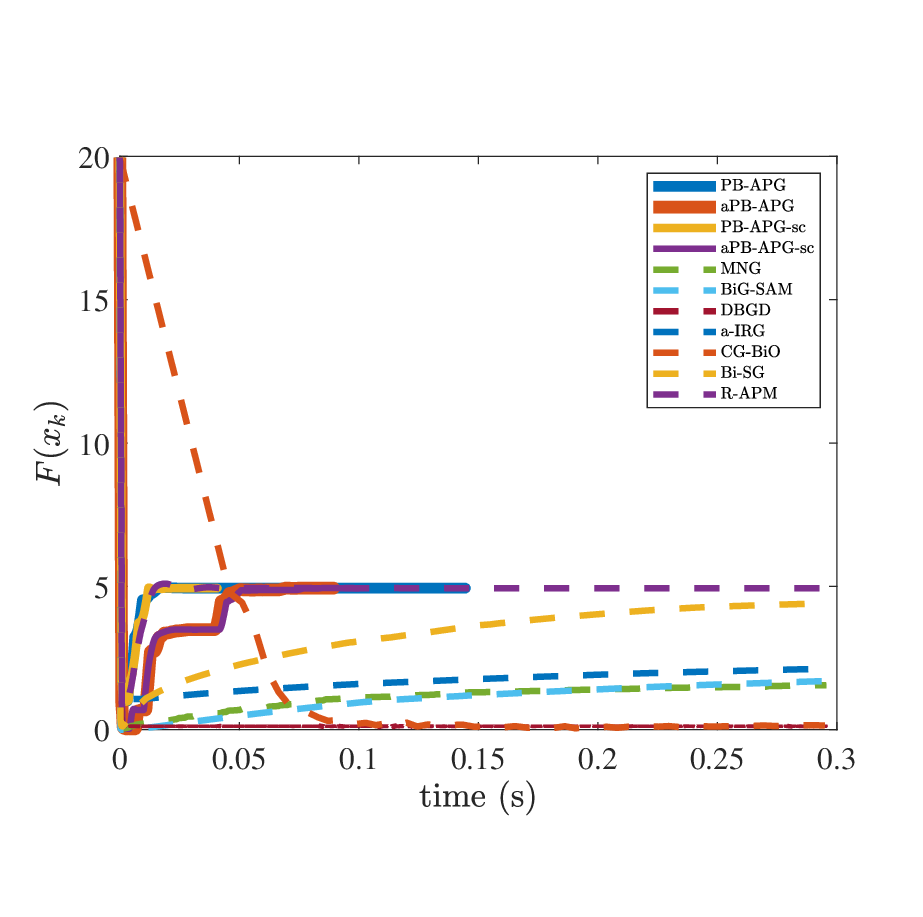}
\end{center}
\caption{LRP \eqref{logistic-regression} with $\epsilon = 10^{-4}$ (left two subfigures) and $\epsilon = 10^{-7}$ (right two subfigures).}
\label{logistic-figure2 1e-4}
\end{figure}

\begin{figure}[htp!]
\begin{center}
\includegraphics[width=0.24\linewidth]{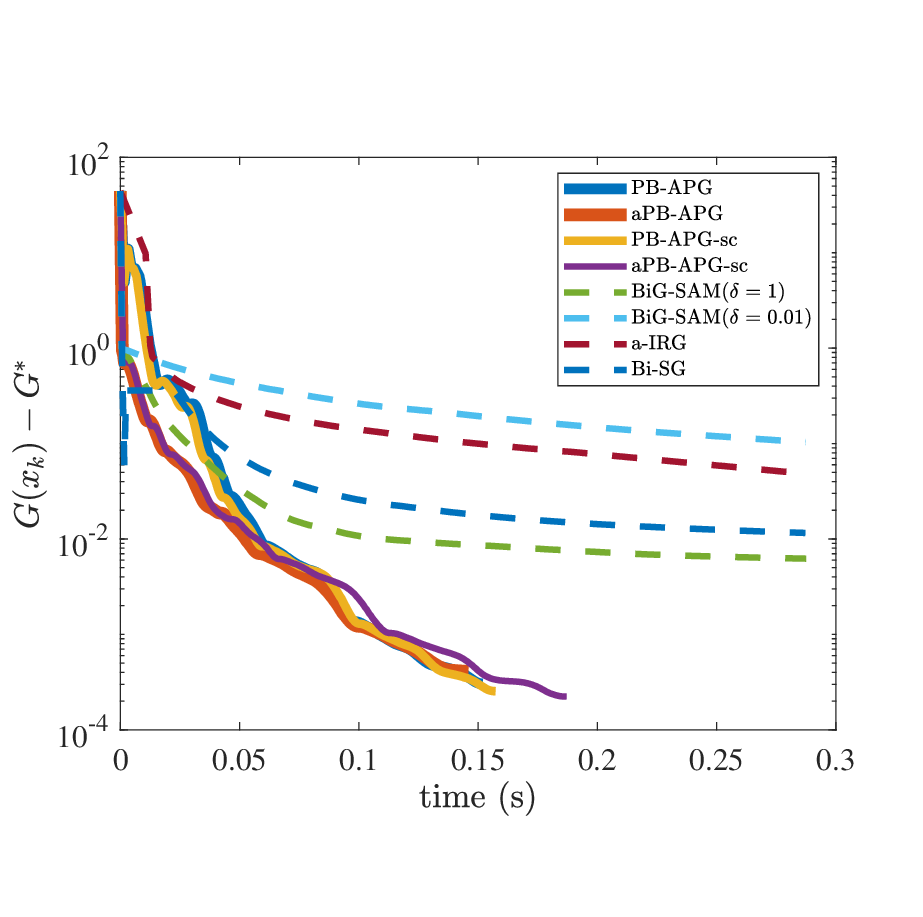}
\includegraphics[width=0.24\linewidth]{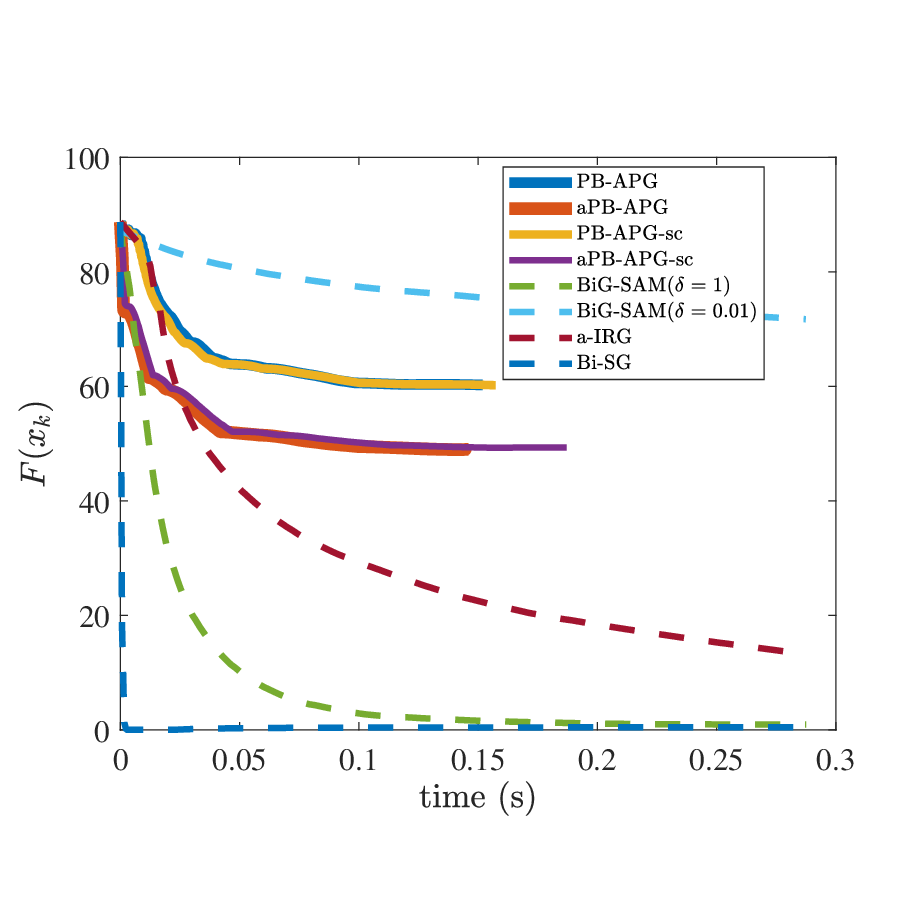}
\includegraphics[width=0.24\linewidth]{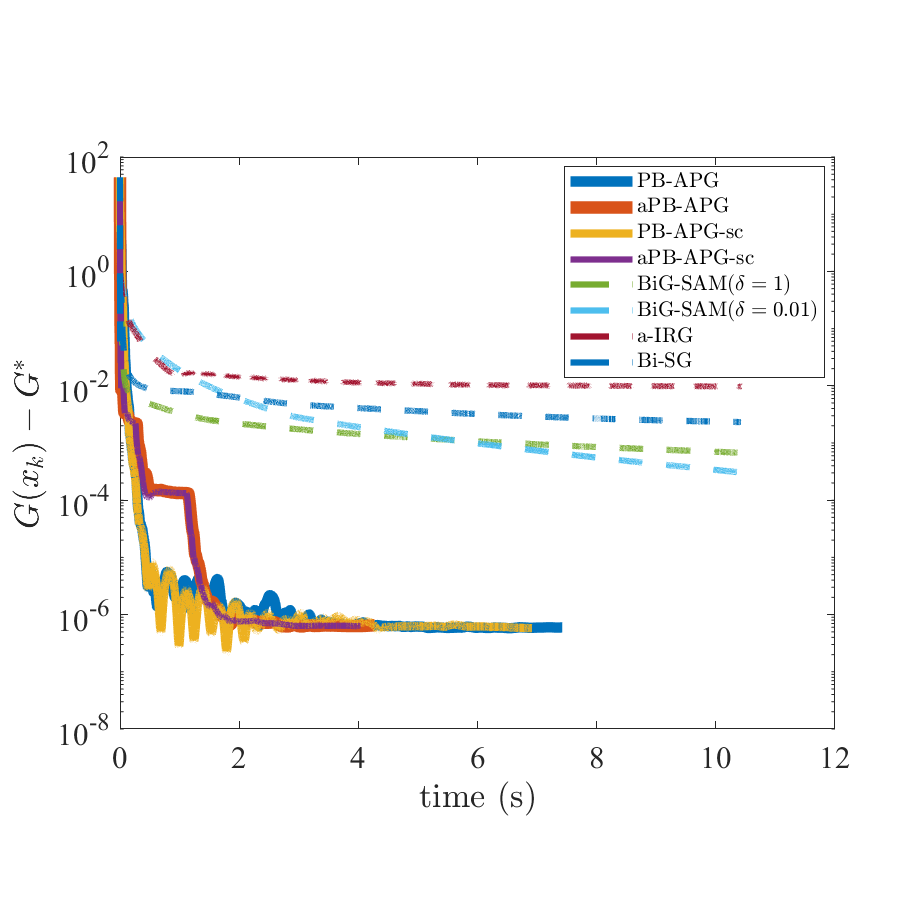}
\includegraphics[width=0.24\linewidth]{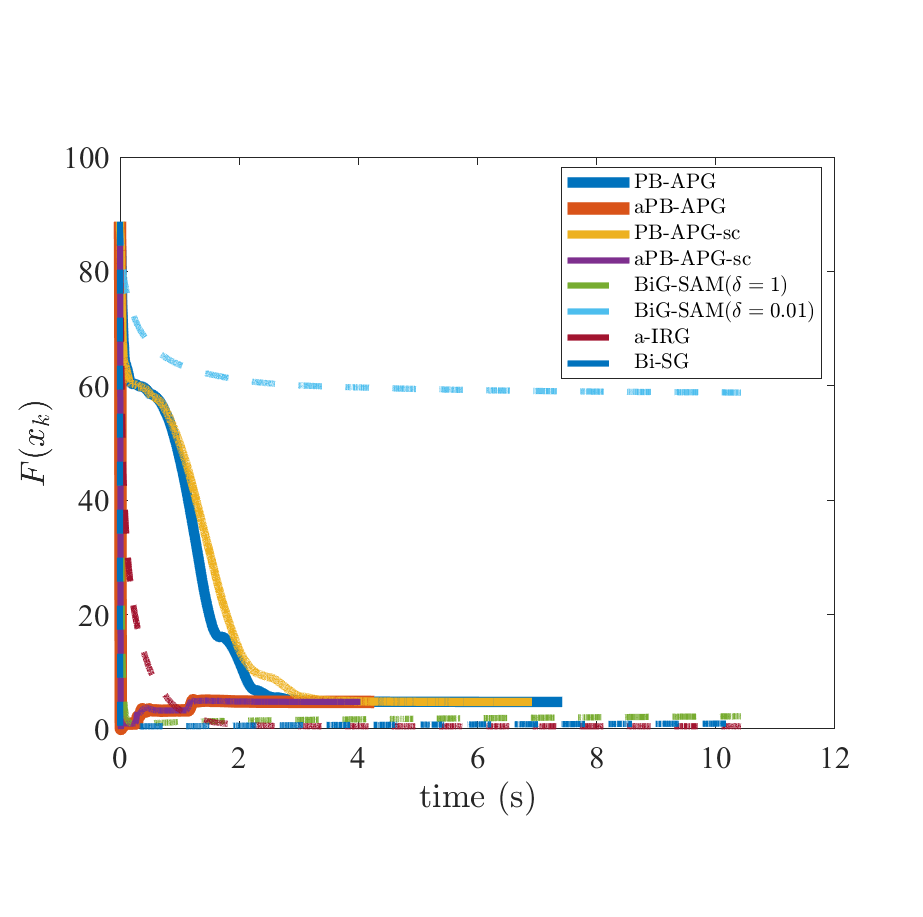}
\end{center}
\caption{LSRP  \eqref{linear-regression-elastic} with $\epsilon = 10^{-4}$ (left two subfigures) and $\epsilon = 10^{-7}$ (right two subfigures).}
\label{or-figure2 1e-4}
\end{figure}

From Figures \ref{logistic-figure2 1e-4} and \ref{or-figure2 1e-4}, it is evident that in most cases, our methods outperform the other methods in terms of both the lower- and upper-level objectives. However, there is an exception in the case of the upper-level objective for problem \eqref{linear-regression-elastic} when $\epsilon=10^{-4}$. 
As illustrated in the second subfigure in Figure \ref{or-figure2 1e-4}, our methods exhibit larger function values for the upper-level objective compared to the other methods (except BiG-SAM ($\delta = 0.01$)), despite still achieving smaller optimal gaps for the lower-level objective. This discrepancy actually indicates that our methods have not yet achieved the desired accuracy when $\epsilon=10^{-4}$, and it is important to note that $\|\x_{k+1}-\x_{k}\|\leq\epsilon$ is not the termination criterion in our proposed algorithms, as explained in Appendix \ref{deta expl}. Therefore, the larger optimality gaps for the upper-level objective in this case may be attributed to the termination criterion.

\begin{table}[htp!]
\centering
\caption{Lower- and upper-level objectives and optimal gaps with different solution accuracies for problem \eqref{logistic-regression}.}
\label{table for add log 1e-4}
\resizebox{1.0\textwidth}{!}{
\begin{tabular}{cccccc}
\hline
\hline
\multicolumn{6}{c}{$\epsilon = 10^{-4}$}\\
\hline
Method & Total iterations & Lower-level value & Lower-level gap & Upper-level value & Upper-level gap \\
\hline
PB-APG     & 124  & 3.2794e-01 & 2.8671e-07 & 4.9483e+00 & 6.7024e-03\\
aPB-APG    & 148  & 3.2794e-01 & 2.3660e-07 & 4.9419e+00 & 2.9831e-04\\
PB-APG-sc  & 100  & 3.2794e-01 & 5.4674e-07 & 4.9287e+00 & -1.2956e-02\\
aPB-APG-sc & 149  & 3.2794e-01 & 7.9015e-07 & 4.9302e+00 & -1.1404e-02\\
\hline
\hline
\multicolumn{6}{c}{$\epsilon = 10^{-7}$}\\
\hline
Method & Total iterations & Lower-level value & Lower-level gap & Upper-level value & Upper-level gap \\
\hline
PB-APG     & 841 & 3.2794e-01 & 1.7631e-08 & 4.9382e+00 & -3.3999e-03\\
aPB-APG    & 551 & 3.2794e-01 & 1.7707e-08 & 4.9382e+00 & -3.4075e-03\\
PB-APG-sc  & 225 & 3.2794e-01 & 1.7493e-08 & 4.9383e+00 & -3.3691e-03\\
aPB-APG-sc & 614 & 3.2794e-01 & 1.7507e-08 & 4.9382e+00 & -3.3874e-03\\
\hline
\hline
\end{tabular}
}
\end{table}

\begin{table}[htp!]
\centering
\caption{Lower- and upper-level objectives and optimal gaps with different solution accuracies for problem \eqref{linear-regression-elastic}.}
\label{table for add or 1e-4}
\resizebox{1.0\textwidth}{!}{
\begin{tabular}{cccccc}
\hline
\hline
\multicolumn{6}{c}{$\epsilon = 10^{-4}$}\\
\hline
Method & Total iterations & Lower-level value & Lower-level gap & Upper-level value & Upper-level gap \\
\hline
PB-APG     & 426 & 7.6950e-03 & 3.0342e-04 & 6.0249e+01 & 5.5407e+01\\
aPB-APG    & 432 & 7.8018e-03 & 4.1016e-04 & 4.8967e+01 & 4.4125e+01\\
PB-APG-sc  & 437 & 7.6456e-03 & 2.5400e-04 & 6.0196e+01 & 5.5354e+01\\
aPB-APG-sc & 517 & 7.6143e-03 & 2.2274e-04 & 4.9292e+01 & 4.4449e+01\\
\hline
\hline
\multicolumn{6}{c}{$\epsilon = 10^{-7}$}\\
\hline
Method & Total iterations & Lower-level value & Lower-level gap & Upper-level value & Upper-level gap \\
\hline
PB-APG     & 13707 & 7.3922e-03 & 5.9756e-07 & 4.7279e+00 & -1.1460e-01\\
aPB-APG    & 7803  & 7.3923e-03 & 6.5025e-07 & 4.7300e+00 & -1.1248e-01\\
PB-APG-sc  & 12724 & 7.3922e-03 & 5.7840e-07 & 4.7354e+00 & -1.0714e-01\\
aPB-APG-sc & 7429  & 7.3922e-03 & 6.3816e-07 & 4.7326e+00 & -1.0992e-01\\
\hline
\hline
\end{tabular}
}
\end{table}

Tables \ref{table for opti}, \ref{table for add log 1e-4}, and \ref{table for add or 1e-4} demonstrate that the number of iterations for our methods also increases with the solution accuracy, while the optimal gaps of the lower- and upper-level objectives decrease correspondingly. This finding confirms that the number of iterations and the optimal gaps are influenced by the solution accuracy, as illustrated in the expressions for the number of iterations provided by Theorem \ref{thm:fista} and other related theorems.

\newpage
\section*{NeurIPS Paper Checklist}

\begin{enumerate}

\item {\bf Claims}
    \item[] Question: Do the main claims made in the abstract and introduction accurately reflect the paper's contributions and scope?
    \item[] Answer: \answerYes{} 
    \item[] Justification: Please refer to Abstract and Section \ref{introduction}.
    \item[] Guidelines:
    \begin{itemize}
        \item The answer NA means that the abstract and introduction do not include the claims made in the paper.
        \item The abstract and/or introduction should clearly state the claims made, including the contributions made in the paper and important assumptions and limitations. A No or NA answer to this question will not be perceived well by the reviewers. 
        \item The claims made should match theoretical and experimental results, and reflect how much the results can be expected to generalize to other settings. 
        \item It is fine to include aspirational goals as motivation as long as it is clear that these goals are not attained by the paper. 
    \end{itemize}

\item {\bf Limitations}
    \item[] Question: Does the paper discuss the limitations of the work performed by the authors?
    \item[] Answer: \answerYes{} 
    \item[] Justification: Please refer to the assumptions adopted in this paper (e.g. Assumptions \ref{ass:Lip}, \ref{holderian}, and \ref{ass:proxfriend}), our study is based on these assumptions.
    \item[] Guidelines:
    \begin{itemize}
        \item The answer NA means that the paper has no limitation while the answer No means that the paper has limitations, but those are not discussed in the paper. 
        \item The authors are encouraged to create a separate "Limitations" section in their paper.
        \item The paper should point out any strong assumptions and how robust the results are to violations of these assumptions (e.g., independence assumptions, noiseless settings, model well-specification, asymptotic approximations only holding locally). The authors should reflect on how these assumptions might be violated in practice and what the implications would be.
        \item The authors should reflect on the scope of the claims made, e.g., if the approach was only tested on a few datasets or with a few runs. In general, empirical results often depend on implicit assumptions, which should be articulated.
        \item The authors should reflect on the factors that influence the performance of the approach. For example, a facial recognition algorithm may perform poorly when image resolution is low or images are taken in low lighting. Or a speech-to-text system might not be used reliably to provide closed captions for online lectures because it fails to handle technical jargon.
        \item The authors should discuss the computational efficiency of the proposed algorithms and how they scale with dataset size.
        \item If applicable, the authors should discuss possible limitations of their approach to address problems of privacy and fairness.
        \item While the authors might fear that complete honesty about limitations might be used by reviewers as grounds for rejection, a worse outcome might be that reviewers discover limitations that aren't acknowledged in the paper. The authors should use their best judgment and recognize that individual actions in favor of transparency play an important role in developing norms that preserve the integrity of the community. Reviewers will be specifically instructed to not penalize honesty concerning limitations.
    \end{itemize}

\item {\bf Theory Assumptions and Proofs}
    \item[] Question: For each theoretical result, does the paper provide the full set of assumptions and a complete (and correct) proof?
    \item[] Answer: \answerYes{} 
    \item[] Justification: Please refer to the theorems and the proofs of them in this paper, please refer to Appendix \ref{sec:proofs}. For example, Theorem \ref{thm:fista} and its proof in Appendix \ref{proof of thm:fista}.
    \item[] Guidelines:
    \begin{itemize}
        \item The answer NA means that the paper does not include theoretical results. 
        \item All the theorems, formulas, and proofs in the paper should be numbered and cross-referenced.
        \item All assumptions should be clearly stated or referenced in the statement of any theorems.
        \item The proofs can either appear in the main paper or the supplemental material, but if they appear in the supplemental material, the authors are encouraged to provide a short proof sketch to provide intuition. 
        \item Inversely, any informal proof provided in the core of the paper should be complemented by formal proofs provided in appendix or supplemental material.
        \item Theorems and Lemmas that the proof relies upon should be properly referenced. 
    \end{itemize}

    \item {\bf Experimental Result Reproducibility}
    \item[] Question: Does the paper fully disclose all the information needed to reproduce the main experimental results of the paper to the extent that it affects the main claims and/or conclusions of the paper (regardless of whether the code and data are provided or not)?
    \item[] Answer: \answerYes{} 
    \item[] Justification: Please refer to Section \ref{experiment} and Appendix \ref{add resu for exper}.
    \item[] Guidelines:
    \begin{itemize}
        \item The answer NA means that the paper does not include experiments.
        \item If the paper includes experiments, a No answer to this question will not be perceived well by the reviewers: Making the paper reproducible is important, regardless of whether the code and data are provided or not.
        \item If the contribution is a dataset and/or model, the authors should describe the steps taken to make their results reproducible or verifiable. 
        \item Depending on the contribution, reproducibility can be accomplished in various ways. For example, if the contribution is a novel architecture, describing the architecture fully might suffice, or if the contribution is a specific model and empirical evaluation, it may be necessary to either make it possible for others to replicate the model with the same dataset, or provide access to the model. In general. releasing code and data is often one good way to accomplish this, but reproducibility can also be provided via detailed instructions for how to replicate the results, access to a hosted model (e.g., in the case of a large language model), releasing of a model checkpoint, or other means that are appropriate to the research performed.
        \item While NeurIPS does not require releasing code, the conference does require all submissions to provide some reasonable avenue for reproducibility, which may depend on the nature of the contribution. For example
        \begin{enumerate}
            \item If the contribution is primarily a new algorithm, the paper should make it clear how to reproduce that algorithm.
            \item If the contribution is primarily a new model architecture, the paper should describe the architecture clearly and fully.
            \item If the contribution is a new model (e.g., a large language model), then there should either be a way to access this model for reproducing the results or a way to reproduce the model (e.g., with an open-source dataset or instructions for how to construct the dataset).
            \item We recognize that reproducibility may be tricky in some cases, in which case authors are welcome to describe the particular way they provide for reproducibility. In the case of closed-source models, it may be that access to the model is limited in some way (e.g., to registered users), but it should be possible for other researchers to have some path to reproducing or verifying the results.
        \end{enumerate}
    \end{itemize}

\item {\bf Open access to data and code}
    \item[] Question: Does the paper provide open access to the data and code, with sufficient instructions to faithfully reproduce the main experimental results, as described in supplemental material?
    \item[] Answer: \answerYes{} 
    \item[] Justification: Please refer to Section \ref{experiment} and Appendix \ref{add resu for exper} and the supplemental material.
    \item[] Guidelines:
    \begin{itemize}
        \item The answer NA means that paper does not include experiments requiring code.
        \item Please see the NeurIPS code and data submission guidelines (\url{https://nips.cc/public/guides/CodeSubmissionPolicy}) for more details.
        \item While we encourage the release of code and data, we understand that this might not be possible, so “No” is an acceptable answer. Papers cannot be rejected simply for not including code, unless this is central to the contribution (e.g., for a new open-source benchmark).
        \item The instructions should contain the exact command and environment needed to run to reproduce the results. See the NeurIPS code and data submission guidelines (\url{https://nips.cc/public/guides/CodeSubmissionPolicy}) for more details.
        \item The authors should provide instructions on data access and preparation, including how to access the raw data, preprocessed data, intermediate data, and generated data, etc.
        \item The authors should provide scripts to reproduce all experimental results for the new proposed method and baselines. If only a subset of experiments are reproducible, they should state which ones are omitted from the script and why.
        \item At submission time, to preserve anonymity, the authors should release anonymized versions (if applicable).
        \item Providing as much information as possible in supplemental material (appended to the paper) is recommended, but including URLs to data and code is permitted.
    \end{itemize}

\item {\bf Experimental Setting/Details}
    \item[] Question: Does the paper specify all the training and test details (e.g., data splits, hyperparameters, how they were chosen, type of optimizer, etc.) necessary to understand the results?
    \item[] Answer: \answerYes{} 
    \item[] Justification: Please refer to Section \ref{experiment} and Appendix \ref{add resu for exper}.
    \item[] Guidelines:
    \begin{itemize}
        \item The answer NA means that the paper does not include experiments.
        \item The experimental setting should be presented in the core of the paper to a level of detail that is necessary to appreciate the results and make sense of them.
        \item The full details can be provided either with the code, in appendix, or as supplemental material.
    \end{itemize}

\item {\bf Experiment Statistical Significance}
    \item[] Question: Does the paper report error bars suitably and correctly defined or other appropriate information about the statistical significance of the experiments?
    \item[] Answer: \answerNo{} 
    \item[] Justification: The error bars are not applicable in this paper.
    \item[] Guidelines:
    \begin{itemize}
        \item The answer NA means that the paper does not include experiments.
        \item The authors should answer "Yes" if the results are accompanied by error bars, confidence intervals, or statistical significance tests, at least for the experiments that support the main claims of the paper.
        \item The factors of variability that the error bars are capturing should be clearly stated (for example, train/test split, initialization, random drawing of some parameter, or overall run with given experimental conditions).
        \item The method for calculating the error bars should be explained (closed form formula, call to a library function, bootstrap, etc.)
        \item The assumptions made should be given (e.g., Normally distributed errors).
        \item It should be clear whether the error bar is the standard deviation or the standard error of the mean.
        \item It is OK to report 1-sigma error bars, but one should state it. The authors should preferably report a 2-sigma error bar than state that they have a 96\% CI, if the hypothesis of Normality of errors is not verified.
        \item For asymmetric distributions, the authors should be careful not to show in tables or figures symmetric error bars that would yield results that are out of range (e.g. negative error rates).
        \item If error bars are reported in tables or plots, The authors should explain in the text how they were calculated and reference the corresponding figures or tables in the text.
    \end{itemize}

\item {\bf Experiments Compute Resources}
    \item[] Question: For each experiment, does the paper provide sufficient information on the computer resources (type of compute workers, memory, time of execution) needed to reproduce the experiments?
    \item[] Answer: \answerYes{} 
    \item[] Justification: Please refer to Section \ref{experiment} and Appendix \ref{add resu for exper}.
    \item[] Guidelines:
    \begin{itemize}
        \item The answer NA means that the paper does not include experiments.
        \item The paper should indicate the type of compute workers CPU or GPU, internal cluster, or cloud provider, including relevant memory and storage.
        \item The paper should provide the amount of compute required for each of the individual experimental runs as well as estimate the total compute. 
        \item The paper should disclose whether the full research project required more compute than the experiments reported in the paper (e.g., preliminary or failed experiments that didn't make it into the paper). 
    \end{itemize}
    
\item {\bf Code Of Ethics}
    \item[] Question: Does the research conducted in the paper conform, in every respect, with the NeurIPS Code of Ethics \url{https://neurips.cc/public/EthicsGuidelines}?
    \item[] Answer: \answerYes{} 
    \item[] Justification: Please refer to Section \ref{experiment} and Appendix \ref{add resu for exper}.
    \item[] Guidelines:
    \begin{itemize}
        \item The answer NA means that the authors have not reviewed the NeurIPS Code of Ethics.
        \item If the authors answer No, they should explain the special circumstances that require a deviation from the Code of Ethics.
        \item The authors should make sure to preserve anonymity (e.g., if there is a special consideration due to laws or regulations in their jurisdiction).
    \end{itemize}

\item {\bf Broader Impacts}
    \item[] Question: Does the paper discuss both potential positive societal impacts and negative societal impacts of the work performed?
    \item[] Answer: \answerNA{} 
    \item[] Justification: There is no societal impact of the work performed.
    \item[] Guidelines:
    \begin{itemize}
        \item The answer NA means that there is no societal impact of the work performed.
        \item If the authors answer NA or No, they should explain why their work has no societal impact or why the paper does not address societal impact.
        \item Examples of negative societal impacts include potential malicious or unintended uses (e.g., disinformation, generating fake profiles, surveillance), fairness considerations (e.g., deployment of technologies that could make decisions that unfairly impact specific groups), privacy considerations, and security considerations.
        \item The conference expects that many papers will be foundational research and not tied to particular applications, let alone deployments. However, if there is a direct path to any negative applications, the authors should point it out. For example, it is legitimate to point out that an improvement in the quality of generative models could be used to generate deepfakes for disinformation. On the other hand, it is not needed to point out that a generic algorithm for optimizing neural networks could enable people to train models that generate Deepfakes faster.
        \item The authors should consider possible harms that could arise when the technology is being used as intended and functioning correctly, harms that could arise when the technology is being used as intended but gives incorrect results, and harms following from (intentional or unintentional) misuse of the technology.
        \item If there are negative societal impacts, the authors could also discuss possible mitigation strategies (e.g., gated release of models, providing defenses in addition to attacks, mechanisms for monitoring misuse, mechanisms to monitor how a system learns from feedback over time, improving the efficiency and accessibility of ML).
    \end{itemize}
    
\item {\bf Safeguards}
    \item[] Question: Does the paper describe safeguards that have been put in place for responsible release of data or models that have a high risk for misuse (e.g., pretrained language models, image generators, or scraped datasets)?
    \item[] Answer: \answerNA{} 
    \item[] Justification: The paper poses no such risks.
    \item[] Guidelines:
    \begin{itemize}
        \item The answer NA means that the paper poses no such risks.
        \item Released models that have a high risk for misuse or dual-use should be released with necessary safeguards to allow for controlled use of the model, for example by requiring that users adhere to usage guidelines or restrictions to access the model or implementing safety filters. 
        \item Datasets that have been scraped from the Internet could pose safety risks. The authors should describe how they avoided releasing unsafe images.
        \item We recognize that providing effective safeguards is challenging, and many papers do not require this, but we encourage authors to take this into account and make a best faith effort.
    \end{itemize}

\item {\bf Licenses for existing assets}
    \item[] Question: Are the creators or original owners of assets (e.g., code, data, models), used in the paper, properly credited and are the license and terms of use explicitly mentioned and properly respected?
    \item[] Answer: \answerYes{} 
    \item[] Justification: All the creators or original owners of assets are properly credited, and the license and terms of use are explicitly mentioned and properly respected, please refer to Section \ref{experiment} and Appendix \ref{add resu for exper}.
    \item[] Guidelines:
    \begin{itemize}
        \item The answer NA means that the paper does not use existing assets.
        \item The authors should cite the original paper that produced the code package or dataset.
        \item The authors should state which version of the asset is used and, if possible, include a URL.
        \item The name of the license (e.g., CC-BY 4.0) should be included for each asset.
        \item For scraped data from a particular source (e.g., website), the copyright and terms of service of that source should be provided.
        \item If assets are released, the license, copyright information, and terms of use in the package should be provided. For popular datasets, \url{paperswithcode.com/datasets} has curated licenses for some datasets. Their licensing guide can help determine the license of a dataset.
        \item For existing datasets that are re-packaged, both the original license and the license of the derived asset (if it has changed) should be provided.
        \item If this information is not available online, the authors are encouraged to reach out to the asset's creators.
    \end{itemize}

\item {\bf New Assets}
    \item[] Question: Are new assets introduced in the paper well documented and is the documentation provided alongside the assets?
    \item[] Answer: \answerYes{} 
    \item[] Justification: Please refer to the supplemental materials and the `README.m' file.
    \item[] Guidelines:
    \begin{itemize}
        \item The answer NA means that the paper does not release new assets.
        \item Researchers should communicate the details of the dataset/code/model as part of their submissions via structured templates. This includes details about training, license, limitations, etc. 
        \item The paper should discuss whether and how consent was obtained from people whose asset is used.
        \item At submission time, remember to anonymize your assets (if applicable). You can either create an anonymized URL or include an anonymized zip file.
    \end{itemize}

\item {\bf Crowdsourcing and Research with Human Subjects}
    \item[] Question: For crowdsourcing experiments and research with human subjects, does the paper include the full text of instructions given to participants and screenshots, if applicable, as well as details about compensation (if any)? 
    \item[] Answer: \answerNA{} 
    \item[] Justification: The paper does not involve crowdsourcing nor research with human subjects.
    \item[] Guidelines:
    \begin{itemize}
        \item The answer NA means that the paper does not involve crowdsourcing nor research with human subjects.
        \item Including this information in the supplemental material is fine, but if the main contribution of the paper involves human subjects, then as much detail as possible should be included in the main paper. 
        \item According to the NeurIPS Code of Ethics, workers involved in data collection, curation, or other labor should be paid at least the minimum wage in the country of the data collector. 
    \end{itemize}

\item {\bf Institutional Review Board (IRB) Approvals or Equivalent for Research with Human Subjects}
    \item[] Question: Does the paper describe potential risks incurred by study participants, whether such risks were disclosed to the subjects, and whether Institutional Review Board (IRB) approvals (or an equivalent approval/review based on the requirements of your country or institution) were obtained?
    \item[] Answer: \answerNA{} 
    \item[] Justification: The paper does not involve crowdsourcing nor research with human subjects.
    \item[] Guidelines:
    \begin{itemize}
        \item The answer NA means that the paper does not involve crowdsourcing nor research with human subjects.
        \item Depending on the country in which research is conducted, IRB approval (or equivalent) may be required for any human subjects research. If you obtained IRB approval, you should clearly state this in the paper. 
        \item We recognize that the procedures for this may vary significantly between institutions and locations, and we expect authors to adhere to the NeurIPS Code of Ethics and the guidelines for their institution. 
        \item For initial submissions, do not include any information that would break anonymity (if applicable), such as the institution conducting the review.
    \end{itemize}

\end{enumerate}

\end{document}